\documentclass[10pt, oneside,reqno]{amsart}\usepackage{amscd}
\usepackage{amssymb}
\usepackage{latexsym}
\usepackage{graphicx}
\usepackage[parfill]{parskip} 
\usepackage{amsmath}
\usepackage{amsfonts}
\usepackage{amssymb}
\usepackage{latexsym}
\usepackage{graphicx}
\usepackage{enumerate}
\usepackage{bbm}
\usepackage{verbatim}
\usepackage{bbold}
\usepackage[utf8]{inputenc}
\usepackage{fourier}

\usepackage{bbm}

\usepackage[final]{hyperref}
\hypersetup{colorlinks=true, linkcolor=blue, anchorcolor=blue, citecolor=red, filecolor=blue, menucolor=blue, pagecolor=blue, urlcolor=blue}
%\usepackage{showkeys}

 % round brackets

 %
 %
\newcommand{\Z}{\mathbb{Z}}

\newcommand{\abs}[1]{\vert #1 \vert}

\newcommand{\Bigabs}[1]{\Bigl\vert #1 \Bigr\vert}

\newcommand{\norm}[1]{\left\Vert #1 \right\Vert}

\newcommand{\C}{\mathbb{C}}

\newcommand{\R}{\mathbb{R}}

\newcommand{\innerprod}[2]{\langle \, #1 , #2 \, \rangle}

\newcommand{\angles}[1]{\langle #1 \rangle}

\DeclareMathOperator{\supp}{supp}

\newtheorem{theorem}{Theorem}
\newtheorem{proposition}{Proposition}
\newtheorem{lemma}{Lemma}
\newtheorem{corollary}{Corollary}

\theoremstyle{definition}
\newtheorem{definition}{Definition}

\theoremstyle{remark}
\newtheorem{remark}{Remark}

\numberwithin{equation}{section}
\setcounter{tocdepth}{1}

\begin{document}

 %//////////////////////////////////////////////////////////////////////////////////////////////////

\title[Small data scattering for Dirac type equation in 3D]{Small data scattering for a cubic Dirac equation with Hartree type nonlinearity in $\R^{1+3}$}

\author{Achenef Tesfahun}

\email{achenef@gmail.com}
 
\address{Department of Mathematics\\
University of Bergen\\
PO Box 7803\\
5020 Bergen\\ Norway}

\subjclass[2010]{35Q55}

\begin{abstract}
We prove that the initial value problem
for the Dirac equation 
$$
  \left (  -i\gamma^\mu \partial_\mu + m \right) \psi= \left( \frac{e^{- |x|}}{|x|} \ast ( \overline \psi \psi)\right)  \psi  \quad   \text{in } \ \R^{1+3}
  $$
is globally well-posed and the solution scatters to free waves asymptotically as $t \rightarrow \pm \infty$,
if we start with initial data that is small in $H^s$ for $s>0$. This
is an almost critical well-posedness result in the sense
that $L^2$ is the critical space for the equation.
The main ingredients in the proof are Strichartz estimates, space-time bilinear  
null-form estimates for free waves in $L^2$, and an application of the $U^p$ and $V^p$-- function spaces.
\end{abstract}

\maketitle
\setcounter{tocdepth}{1}
\tableofcontents

%%%%%%%%%%%%%%%%%%%%%%%%%%%%%%%%%%%%%%%%%

\section{Introduction}

\subsection{Preliminary}
We consider the initial value problem for the nonlinear Dirac equation with Hartree type nonlinearity
 \begin{equation}\label{Dirac1}
\left\{
\begin{aligned}  \left ( -i\gamma^\mu \partial_\mu + m \right) \psi &= \left(V\ast ( \overline \psi \psi)\right)  \psi  \quad   \text{in } \ \R^{1+3}, 
   \\
       \psi(0,\cdot)&=\psi_0 \in H^s(\R^3),
    \end{aligned}
\right.
\end{equation}
where the unknowns are a spinor field $\psi(t,x)$
regarded as a column vector in $\C^4$;  $m\ge 0$ is a mass parameter; 
$V(x)= |x|^{-1} e^{ -|x|}  $ is the Yukawa potential;  $\gamma^\mu \partial_\mu=\gamma^{0}\partial_t + \sum_{j=1}^3 \gamma^{j}\partial_{x_j}$, where $\{ \gamma^\mu \}_{\mu = 0}^3$ are the $4 \times 4$ Dirac
matrices, given in $2 \times 2$ block form by
$$
  \gamma^{0} = \begin{pmatrix}
    I & 0  \\
    0 & -I
  \end{pmatrix},
  \qquad
  \gamma^{j} = \begin{pmatrix}
    0 & \sigma^{j}  \\
    -\sigma^{j} & 0
  \end{pmatrix},
$$
where
$$
  \sigma^{1} =
  \begin{pmatrix}
    0 & 1  \\
    1 & 0
  \end{pmatrix},
  \qquad
  \sigma^{2} =
  \begin{pmatrix}
    0 & -i  \\
    i & 0
  \end{pmatrix},
  \qquad
  \sigma^{3} =
  \begin{pmatrix}
    1 & 0  \\
    0 & -1
  \end{pmatrix}
$$
are the Pauli matrices; $ \overline \psi =\psi^\dagger \gamma^0 $, where $\psi^\dagger$ denotes the
the conjugate transpose, hence
$$
  \overline \psi \psi =\psi^\dagger \gamma^0  \psi =\innerprod{\gamma^0\psi}{ \psi}  \equiv \abs{\psi_1}^2 +
  \abs{\psi_2}^2 - \abs{\psi_3}^2 - \abs{\psi_4}^2,
$$
where $\psi_1,\dots,\psi_4$ are the components of $\psi$. Finally, $H^s$ is the Sobolev space of order $s$.

Equation \eqref{Dirac1} with a Coulomb potential, i.e., $V(x)= |x|^{-1} $,  and a quadratic term
 $|\psi|^2$ replacing $\overline \psi \psi$ was derived by Chadam 
 and Glassey \cite{CG76} by uncoupling 
 the Maxwell-Dirac equations under the assumption of vanishing magnetic field.
 They also conjectured in the same paper \cite[see  pp. 507]{CG76} 
  that \eqref{Dirac1} with a Yukawa potential $V$ can be derived by uncoupling 
 the Dirac-Klein-Gordon equations:
\begin{equation} \label{DKG}\tag{DKG}
\begin{cases}
\left ( -i\gamma^\mu \partial_\mu + m \right) \psi =\phi \psi,\\
(\partial_t^2-\Delta+M^2)\phi= \overline \psi \psi.
\end{cases}
\end{equation}
 
 Now if we assume that
the scalar field $\phi$ is a standing wave of the form $\phi(t,x)=e^{i c t}\varphi(x)$ with $|c|< M$ (see e.g. \cite{EGS96, CY19}), the Klein-Gordon part of \eqref{DKG} becomes
\begin{equation*}
 \left(-\Delta+ \left(M^2-c^2\right) \right)\phi= \overline \psi \psi,
\end{equation*}
 whose solution is given by
\begin{equation*}
 \phi= V_{m_0} \ast ( \overline \psi \psi),
\end{equation*}
where $V_{m_0}= (4\pi |x|)^{-1} e^{ -m_0 |x|}  $ with $m_0=\sqrt{M^2-c^2} > 0$. Plugging $\phi= V_{m_0} \ast ( \overline \psi \psi)$ back into the first equation in \eqref{DKG} yields \eqref{Dirac1} with $V$ replaced by $V_{m_0}$. Nevertheless, the analysis of \eqref{Dirac1} with the Yukawa potential $V$ or $V_{m_0}$ is the same.
 
   The $L^2$-norm of the solution  for \eqref{Dirac1} is conserved:
\begin{align*}
 \int_{\R^3} |\psi(t,x)|^2 \, dx = \int_{\R^3} |\psi(0,x)|^2 \, dx. 
\end{align*}
In the massless case, $m=0$, \eqref{Dirac1} is invariant under the scaling 
 $$u(t,x) \mapsto u_\lambda(t,x) = \lambda^{\frac32} u(\lambda t, \lambda x)$$ for 
fixed $\lambda > 0$. This scaling symmetry leaves the $L^2$--norm invariant, and 
so equation \eqref{Dirac1} is \emph{$L^2$-critical}.
 
 A related equation that has been studied extensively is the boson star equation:
 \begin{equation}\label{BS}
   \left(- i\partial_t  + \sqrt{m^2-\Delta} \right)\, u= (V \ast |u|^2)u \quad \text{in } \R^{1+3}.
\end{equation}
The first well-posedness result for this equation with both the Coulomb and Yukawa potential was obtained by Lenzmann \cite{L07}  
for data in $H^s$ with $s \geq \frac12$, and later this was improved to $s>1/4$  by Lenzmann and Herr \cite{HL14}.  Concerning scattering theory Pusateri \cite{FP14} established a modified scattering result in the case of Coulomb potential (which is the most difficult case) for small initial data in some weighted Sobolev space. 
There are several well-posedness and scattering results 
for \eqref{BS} with potentials of the form 
$V(x)= |x|^{-a}$ for $a\in (1, 3)$;  see eg. \cite{CO06,CO07, CO08, COSS09, FP14}.  
  
  Convolution with a Yukawa potential is (up to a multiplicative constant) the Fourier-multiplier $ \left(1-\Delta \right)^{-1}$ with Fourier symbol $ \left(1+ |\xi|^2\right)^{-1}$ in $\R^3$ while Convolution with a Coulomb potential is (up to a multiplicative constant) the Fourier-multiplier $ \left(-\Delta \right)^{-1}$ with Fourier symbol $  |\xi|^{-2}$ in $\R^3$. Thus, both of these potentials  are smoothing operators, however, the Yukawa potential has an advantage over the Coulomb potential (and also potentials of the form $V(x)= |x|^{-a}$ for $a\in (0, 3)$) since the latter one is singular near the origin. 
  
Recently, Herr and the present author \cite{HT15} proved small data scattering for \eqref{BS} with $m>0$ and $s>\frac12$. Consequently, scattering is obtained for the 
nonlinear Dirac equation 
 \begin{equation}\label{Dirac-BS}
  \left (  -i\gamma^\mu \partial_\mu + m \right) \psi= \left(V \ast |\psi|^2\right)  \psi  \quad   \text{in } \ \R^{1+3}
\end{equation}
with Yukawa potential, $m>0$ and $s>\frac12$ (see \cite[Remark 1.2]{HT15})\footnote{Global well-posedness and scattering of \eqref{Dirac1} for $m>0$ and $s>1/2$ will also follow from \cite{HT15}. However, Theorem \ref{mainthm} improves this result to  $m\ge 0$ and $s>0$ .}. 
 Existence of weak solution for \eqref{Dirac-BS} 
 with a Yukawa potential in the massless case ($m=0$) was proved earlier by Dias and Figueira \cite{DF89-2, DF89-1}. There is also a small data scattering result due to Machihara and Tsutaya \cite{MT09} for \eqref{Dirac-BS} 
 with a potential 
$V(x)= |x|^{-a}$ for $a\in (2, 3)$, $m>0$ and $s>a/6 + 1/2$.

The key difference between \eqref{Dirac1} and \eqref{Dirac-BS} is the nonlinearity $\left(V\ast ( \overline \psi \psi)\right)  \psi $ contains a hidden null-structure while this structure is not present in $ \left(V \ast |\psi|^2\right)  \psi .$
In the present paper, we exploit this null-structure to obtain small data scattering for \eqref{Dirac1} for all $m\ge 0$ and $s>0$. To establish this result we first prove $L^2$- space-time bilinear null-form estimates for free waves and frequency localized quadrilinear estimates in $U^p$ and $V^p$--spaces. To the authors knowledge there is no prior well-posedness result for \eqref{Dirac1}.

Our main result is as follows.
\begin{theorem}\label{mainthm}
Let $m\ge 0$, $s>0$ and $\norm{ \psi_0}_{H^s} < \varepsilon$ for sufficiently small $\varepsilon>0$. Then the initial value problem \eqref{Dirac1} is globally well-posed and the solution $\psi$ scatter to free waves as $t\rightarrow \pm \infty$.
                   
\end{theorem}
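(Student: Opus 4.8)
The plan is to set up a contraction/iteration scheme for the Duhamel formulation of \eqref{Dirac1} in an adapted function space built from the $U^p$ and $V^p$ spaces associated with the half-wave (or half-Klein-Gordon) propagators, after first diagonalizing the Dirac operator. Writing $D_m = \sqrt{m^2 - \Delta}$ and using the projections $\Pi_\pm(\xi)$ onto the eigenspaces of the symbol $\beta m + \alpha\cdot\xi$, one decomposes $\psi = \psi_+ + \psi_-$ with $\psi_\pm = \Pi_\pm(D)\psi$, so that each component solves a half-wave equation $(-i\partial_t \pm D_m)\psi_\pm = \Pi_\pm(D)\mathcal{N}(\psi)$ with $\mathcal{N}(\psi) = (V\ast(\overline\psi\psi))\psi$. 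The solution space will be (a frequency-weighted version of) $U^2_{\pm D_m}$ or the slightly larger $V^2$, with an $\ell^2$ sum over Littlewood-Paley pieces carrying $H^s$ weights; the point of these spaces is that they interpolate well with $L^2$-based bilinear estimates while still embedding into $C_t H^s$ and into all the relevant Strichartz spaces, which is what ultimately yields scattering (the limits $\lim_{t\to\pm\infty} e^{\mp it D_m}\psi_\pm(t)$ exist in $H^s$ because the $U^2$-norm of the Duhamel term is finite).

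The analytic heart of the argument is a multilinear estimate controlling $\|\mathcal{N}(\psi)\|_{N}$, where $N$ is the dual-type space adapted to the iteration, by $\varepsilon^2 \|\psi\|_X^3$ with room to spare in the $H^s$ regularity. Here I would use that $V\ast\,\cdot\,$ is the Fourier multiplier $c(1-\Delta)^{-1}$, which gains two derivatives; combined with the crucial observation quoted in the introduction that $\overline\psi\psi = \langle \gamma^0\psi,\psi\rangle$ carries a hidden null structure — when one expands into the $\pm$ components, the bilinear form $\overline{\psi_{\pm_1}}\psi_{\pm_2}$ evaluated on two free waves of frequencies $\xi_1,\xi_2$ is, for the same-sign interactions, bounded by an angle $\angle(\xi_1,\xi_2)$ (and for opposite signs by the analogous concavity gain), exactly as for the Dirac bilinear forms $\langle\Pi_{\pm_1}(\xi_1)\cdot,\Pi_{\pm_2}(\xi_2)\cdot\rangle$. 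The strategy is then: reduce by duality and Littlewood-Paley to a frequency-localized quadrilinear estimate with four dyadic frequencies $N_1,N_2,N_3,N_4$; transfer from $U^2/V^2$ to free solutions via the standard transference principle; apply the $L^2_{t,x}$ bilinear null-form estimate (proved earlier in the paper) to the "inner" pair that gets convolved with $V$, pairing it against the other factor, using the two-derivative smoothing of $V$ to absorb the output frequency of the convolution; and bound the remaining factor by a suitable Strichartz norm. Summation over the dyadic pieces then closes because of the derivative gains, leaving $s>0$ (rather than $s\ge 1/2$) as the only requirement — the massless case $m=0$ being handled identically since the bilinear estimates are for the wave, not Klein-Gordon, symbol.

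Concretely, the iteration map $\Phi(\psi) = e^{\mp itD_m}\psi_0^\pm - i\int_0^t e^{\mp i(t-t')D_m}\Pi_\pm(D)\mathcal N(\psi)(t')\,dt'$ is shown, via the linear estimate for $U^2$-Duhamel terms together with the quadrilinear estimate, to map the ball $\{\|\psi\|_X \le 2C\varepsilon\}$ into itself and to be a contraction there, for $\varepsilon$ small; uniqueness, global existence, persistence of $H^s$ regularity, and continuous dependence are then automatic, and scattering follows from finiteness of the tail of the Duhamel integral in $U^2 \hookrightarrow C_tH^s$.

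The main obstacle I anticipate is the quadrilinear estimate, and within it two issues: first, extracting and exploiting the null structure uniformly over all eight sign combinations $(\pm_1,\pm_2)$ for the $\overline\psi\psi$ factor — in particular the opposite-sign case, where the null form does not vanish but one must instead exploit the transversality/curvature of the two characteristic cones to still gain an $L^2_{t,x}$ bilinear bound; and second, organizing the dyadic summation so that the two derivatives gained from $V = c(1-\Delta)^{-1}$ are correctly allocated — when the convolution output frequency is small (the "high-high to low" interaction inside $\overline\psi\psi$) one must be careful that the smoothing of $V$ is not wasted, and when it is large one needs the null-form gain rather than the smoothing. Handling the low-frequency (resonant) regime, where $D_m \sim m$ has no dispersion for $m>0$, requires the $(1-\Delta)^{-1}$ gain to substitute for the missing decay; the massless case avoids this but then needs the homogeneous Strichartz/bilinear estimates, which is precisely why the paper's bilinear estimates are stated for free waves in $L^2$.
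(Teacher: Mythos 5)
Your proposal is essentially the paper's argument: spinor diagonalization into $\Pi^\pm_m(D)$ components, an $\ell^2$-dyadic $U^2_\pm/V^2_\pm$ iteration, duality reduction to a frequency-localized quadrilinear estimate, and closure by combining the $\langle D\rangle^{-2}$ smoothing of $V$ with $L^2_{t,x}$ bilinear null-form estimates for free waves transferred via $U^2$, scattering then following from $U^2\subset V^2_{-,\mathrm{rc}}$. One structural point worth flagging for $m>0$: the symbol $\Pi_m^-(\eta)\Pi_m^\pm(\zeta)$ is not purely a null form, so the paper must split off mass-dependent remainder terms $B_j$ (which carry no angular gain but are smoother by a full derivative) alongside the genuine null forms $Q_j$, and, because the transference principle is native to $U^p$ rather than $V^p$, pairing against the $V^2$ dual test function requires the $U^2/U^p$ interpolation lemma together with a modulation/non-resonance argument in the remaining sign configuration.
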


\begin{remark}
\begin{enumerate}
\item After the submission of this paper the author has learned that similar scattering
result for \eqref{Dirac1} with a Yukawa potential and potentials of type 
$V(x)= |x|^{-a}$ 
 was independently proved by Yang \cite{CY19}.

\item  The critical case $s=0$ corresponds to initial data in $L^2(\R^3).$ The presence of the factor $\lambda_{\text{med}}^\delta$ ($\delta>0$ small) in the dyadic quadrilinear estimate in Lemma \ref{KeyLemma1} impedes us to perform the sum in Lemma \ref{lemma-summing} for $s=0$. However, if one is able to prove the estimate with the factor $\lambda_{\text{med}}^{-\delta}$ or $(\lambda_{\text{med}}/\lambda_{\text{max}})^\delta$ replacing $\lambda_{\text{med}}^\delta$, then it is possible to do the sum for $s=0$, and hence prove Theorem \ref{mainthm} for initial data in $L^2(\R^3)$. This may however require modifying the working spaces or proving more refined estimates.  In this paper, we do not claim that the quadrilinear estimate in Lemma \ref{KeyLemma1} is optimal.

\item Recently, after the submission of this paper, the author established a scattering result for \eqref{Dirac1} in $\R^{1+2}$ for $m>0$ and $s>0$ \cite{AT18}. 
\end{enumerate}
\end{remark}

\subsection{Reformulation of Theorem \ref{mainthm}}
We rewrite \eqref{Dirac1} in a slightly different
form by multiplying the equation by $\beta =
\gamma^0$:
\begin{equation}\label{Dirac2}
\left\{
\begin{aligned}  ( - i\partial_t  + \boldsymbol \alpha \cdot D +   m \beta) \psi &= \left(V \ast \innerprod{\beta\psi}{\psi}\right)\beta \psi  \quad   \text{in } \ \R^{1+3},
   \\
       \psi(0,\cdot)&=\psi_0 \in H^s(\R^3),
    \end{aligned}
\right.
\end{equation}
where $D=-i \nabla$  and $\boldsymbol \alpha=(\alpha^1, \alpha^2, \alpha^3)$ with $
  \alpha^{j} \equiv \gamma^{0}\gamma^{j}$.
  These  matrices 
 satisfy the following identities:
\begin{equation}\label{matrixidentities1}
\begin{split}
 &\beta^2 = (\alpha^j)^2 = I ,
\quad
  \alpha^j \beta = - \beta \alpha^j.
  \\
 & \alpha^{j} \alpha^{k} = - \alpha^{k} \alpha^{j} + 2 \delta^{jk} I,
  \end{split}
\end{equation}
where $ \delta^{jk} =1$ if $j=k$ and  $ \delta^{jk} =0$ if $j\neq k$.
Moreover, 
\begin{equation}\label{matrixidentities2}
\begin{split}
  \alpha^{j} \alpha^{k}
  &= \delta^{jk} I + i \epsilon^{jkl} S_l,
  \end{split}
\end{equation}
where
$\epsilon^{jkl} =1$ if $(j,k, l)$ is an even permutation of $(1,2,3)$, $\epsilon^{jkl}  = -1$ if
$(j,k,l)$ is an odd permutation of $(1,2,3)$ and $\epsilon^{jkl}  = 0$ otherwise, and
$$
S^{l} =
  \begin{pmatrix}
    \sigma^{l} & 0  \\
    0 & \sigma^{l}
  \end{pmatrix}.
$$

Following \cite{DFS06, BH13} we decompose the spinor $\psi$ relative to
a basis of the operator $\boldsymbol \alpha \cdot D +   m \beta$ whose symbol is $\boldsymbol \alpha \cdot \xi +   m  \beta$. Since
$(\boldsymbol \alpha \cdot \xi +   m  \beta)^2=(|\xi|^2+m^2)I$ , the eigenvalues are $\pm \angles{\xi}_m$, where 
$$\angles{\xi}_m=\sqrt{m^2+|\xi|^2}.$$
Now define
the projections
$$ \Pi^{\pm}_m(D) = \frac{1}{2} \left( I \pm \frac{1}{\angles{D}_m }[  \boldsymbol \alpha \cdot D + m \beta]
\right).$$ 
 Then we can decompose
\begin{equation}
\label{psi-split}
 \psi = \psi^+ + \psi^-, \quad \text{ where} \ \
\psi^\pm  = \Pi^\pm_m(D) \psi. 
\end{equation}

In view of the identities in \eqref{matrixidentities1}--\eqref{matrixidentities2} we have
\begin{equation}
\label{Pidentities}
 \Pi^{\pm }_m(D) \Pi^{\pm }_m(D) =\Pi^{\pm }_m(D) , \quad  \Pi^{\pm }_m(D) \Pi^{\mp }_m(D) =0
\end{equation}and 
\begin{equation}
\label{Pbetaidentities}
 \beta \Pi^{\pm }_m(D)  =\Pi^{\mp }_m(D) \beta \pm m\angles{D}_m^{-1}.
\end{equation}

Applying $\Pi^\pm_m(D)$ to \eqref{Dirac2} and using \eqref{psi-split}--\eqref{Pidentities} we obtain 
\begin{equation}\label{Dirac3}
\left\{
\begin{aligned}
  & \bigl( -i\partial_t  + \angles{D}_m \bigr) \psi^+ =  \Pi_m^+(D)\left[(V \ast\innerprod{\beta\psi}{\psi}) \beta \psi\right], 
 \\
   & \bigl( -i\partial_t  - \angles{D}_m \bigr) \psi^- =  \Pi_m^-(D)\left[(V \ast \innerprod{\beta\psi}{\psi} )\beta \psi\right]
\end{aligned}
\right.
\end{equation}
with initial data
\begin{equation}\label{Data3}
 \psi^\pm(0,\cdot)=\psi_0^\pm \in H^s(\R^3),
\end{equation}
where 
$$\psi_0^\pm=\Pi^\pm_m(D)\psi_0.$$
We denote by  
$
 S_m (\pm t) 
$ the solution propagators to the free Dirac equation:
$$ S_m (\pm t)f =e^{\mp it \angles{D}_m} f=\int_{\R^3} e^{\mp it \angles{\xi}_m}  e^{ i x  \cdot \xi} \widehat f(\xi ) \ d\xi.$$

 Now Theorem \ref{mainthm} reduces to the following:
\begin{theorem}\label{mainthm1}
Let $m\ge 0$, $s>0$ and $\norm{ \psi_0^\pm}_{H^s} < \varepsilon$ for sufficiently small $\varepsilon>0$. Then the IVP \eqref{Dirac3}--\eqref{Data3} is globally well-posed and the solutions $\psi^\pm$ scatter to free waves  as $t\rightarrow  \pm \infty$, i.e., there exist 
$(f_\pm, g_\pm) \in H^s \times H^s$ such that
                  $$
   \lim_{t\rightarrow  \infty} \norm{\psi^\pm(t)-S_m(\pm t) f_\pm}_{H^s}=0
                    $$
 and 
                  $$
   \lim_{t\rightarrow  -\infty} \norm{\psi^\pm(t)-S_m(\pm t) g_\pm}_{H^s}=0.
                    $$               
\end{theorem}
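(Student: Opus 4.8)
The plan is to run a contraction-mapping argument in a suitable iteration space built from the $U^p_{\angles{D}_m}$ and $V^p_{\angles{D}_m}$ function spaces adapted to the half-wave propagators $S_m(\pm t)$. Concretely, for each sign $\pm$ I would define a resolution space $X^s_\pm$ whose norm is the square-summed-over-dyadic-frequencies $\angles{\lambda}^{2s}$-weighted $U^2_{\angles{D}_m}$-norm (or the slightly larger $V^2$-variant, as needed for the endpoint Strichartz estimates), and set $X^s = X^s_+ \times X^s_-$. One then writes \eqref{Dirac3}--\eqref{Data3} as the fixed-point equation
\begin{equation*}
\psi^\pm = S_m(\pm t)\psi_0^\pm - i\int_0^t S_m(\pm(t-t'))\,\Pi_m^\pm(D)\left[(V\ast\innerprod{\beta\psi}{\psi})\beta\psi\right](t')\d t',
\end{equation*}
and uses the standard properties of $U^p/V^p$ spaces: the linear homogeneous flow of data in $H^s$ lies in $X^s$ with comparable norm, and the Duhamel term is controlled in $X^s$ by the $L^1_t H^s$ norm of the nonlinearity, or better, by a duality pairing against $V^2$ so that one never actually needs an $L^1_t$ bound but only a spacetime estimate on the quadrilinear form.

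The heart of the matter is the multilinear estimate: one must bound the quadrilinear expression $(V\ast\innerprod{\beta\psi_1}{\psi_2})\beta\psi_3$ tested against a fourth function $\psi_4$ in the correct spaces, uniformly in (and summably over) the four dyadic frequency parameters $\lambda_1,\dots,\lambda_4$. I would first reduce to free solutions via the transfer principle for $U^2$ spaces, so that it suffices to prove a spacetime bilinear $L^2$-estimate for products of free Dirac waves. Here the crucial input is the hidden null structure: since $V\ast$ is, up to constants, the multiplier $\angles{D}^{-2}$, the expression $\innerprod{\beta\psi}{\psi}$ with $\psi$ decomposed into $\psi^{\pm_1},\psi^{\pm_2}$ carries the symbol of an angular null form — the bilinear interaction $\Pi^{\pm_1}(\xi)\beta\,\overline{\Pi^{\pm_2}(\eta)}$ vanishes (or gains a power of the angle $\vangle(\pm_1\xi,\pm_2\eta)$, respectively $\vangle(\pm_1\xi,\mp_2\eta)$) in the appropriate resonant regime. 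Combining this angular gain with the smoothing from $\angles{D}^{-2}$ and the $L^2_{t,x}$ bilinear estimates for free half-waves (of Klainerman--Machedon / Foschi--Klainerman type, adapted to $\angles{D}_m$) yields the frequency-localized quadrilinear bound stated as Lemma \ref{KeyLemma1}, namely a bound of the shape $\lambda_{\mathrm{med}}^\delta \lambda_{\mathrm{min}}^{?}\cdots$ times the product of the four $V^2$-norms; Strichartz estimates in $U^p$ handle the non-resonant and high-low pieces where the null form gives nothing.

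Once the dyadic quadrilinear estimate is in hand, the summation over $\lambda_1,\dots,\lambda_4$ (Lemma \ref{lemma-summing}) is a routine but slightly delicate Schur-test/Cauchy--Schwarz computation: the $s>0$ hypothesis is exactly what is needed to absorb the $\lambda_{\mathrm{med}}^\delta$ loss (choosing $\delta<s$), at the cost of excluding $s=0$ as noted in the remark. Putting these together, the map $\Phi(\psi^+,\psi^-)$ defined by the Duhamel formula satisfies $\norm{\Phi(\psi)}_{X^s}\lesssim \varepsilon + \norm{\psi}_{X^s}^3$ and a matching difference estimate $\norm{\Phi(\psi)-\Phi(\tilde\psi)}_{X^s}\lesssim (\norm{\psi}_{X^s}^2+\norm{\tilde\psi}_{X^s}^2)\norm{\psi-\tilde\psi}_{X^s}$, so for $\varepsilon$ small enough $\Phi$ is a contraction on a small ball of $X^s$, giving global existence and uniqueness. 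Scattering then follows from the general principle that $X^s$-functions have limits of $S_m(\mp t)\psi^\pm(t)$ in $H^s$ as $t\to\pm\infty$ (a built-in feature of the $U^2$ construction), with the limits defining $(f_\pm,g_\pm)$; one reads off the asymptotics directly from the Duhamel representation and the convergence of the Duhamel integral in $X^s$. I expect the main obstacle to be the careful bookkeeping in the bilinear null-form estimate — isolating the correct angle, matching it against the modulation/output-frequency gains in every frequency configuration (high-high-to-low, high-low, parallel vs.\ transversal interactions), and doing so in a way that the final dyadic bound is genuinely summable for all $s>0$.
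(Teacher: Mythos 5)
Your outline follows essentially the same route as the paper: $U^2/V^2$ iteration spaces adapted to $S_m(\pm t)$, duality against $V^2$ to reduce the Duhamel estimate to a frequency-localized quadrilinear bound (Lemma~\ref{KeyLemma1}), extraction of the null structure from $\Pi^\pm_m(\eta)\beta\Pi^\pm_m(\zeta)$ combined with the $\angles{D}^{-2}$-smoothing from the Yukawa convolution, Foschi--Klainerman-type bilinear $L^2$ estimates for free Klein--Gordon waves plus Strichartz in $U^p$, the Schur-test summation of Lemma~\ref{lemma-summing} with $\delta<s$, and a small-ball contraction whose $U^2$-built-in limits give scattering. This is the paper's strategy, correctly identified, including the recognition that the bookkeeping over frequency configurations (the case analysis of Section~\ref{secKeyLemma1} and the non-resonance Lemma~\ref{lm-BI-reson}) is where the real work lies.
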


The rest of the paper is organized as follows. In Section \ref{secfuncspaces} 
we give some notation, define the $U^p$ and $V^p$-spaces and collect their properties. 
In Section \ref{LinBi-Est}, we collect some linear, convolution and bilinear estimates
for free solutions of Klein-Gordon equation. In Section \ref{secnull} we reveal the null structure in \eqref{Dirac3} and prove bilinear null-form estimates. In Section \ref{sec-ProofMainThm} we give the proof for Theorem \ref{mainthm1} after reducing it first to non-linear estimates. The proof for these non-linear estimates will be given in Section \ref{secKeyLemma1}. In Sections \ref{seclmI+-} and \ref{seclmBiest} we prove the convolution and bilinear estimates for free waves stated in Section \ref{LinBi-Est}.

%%%%%%%%%%%%%%%%%%%%%%%%%%%%%%%%%%%%%%%%%

%%%%%%%%%%%%%%%%%%%%%%%%%%%%%%%%%%%%%%%%%

\section{Notation and Function spaces}\label{secfuncspaces}

 \subsection{Notation}
In equations, estimates and summations the Greek letters $\mu$ and $\lambda$  are presumed to be dyadic with $\mu, \lambda >  0$, i.e., these
variables range over numbers of the form $ 2^k$ for $k \in \Z$.
In estimates we use $A\simeq B$ as shorthand for $A = CB$ and $A \lesssim B$ for $A \le C B$ for some constant $C>0$ whereas we use $A\ll B$ for $A\le C^{-1} B$ for some constant $C\gg1 $; the constants are independent of dyadic numbers such as $\mu$ and $\lambda$;    $A\sim B$ means $B\lesssim A\lesssim B$;  $A\nsim B$ means either $A\ll  B$ or  $B\ll  A$;  $A\vee B$ and $A\wedge B$ denote the maximum and minimum of $A$ and $B$, respectively;
$\mathbbm{1}_{\{\cdot\}}$ denotes the indicator function which is 1 if the condition in the bracket is satisfied and 0 otherwise; we write  $a\pm:=a\pm \varepsilon$ for sufficiently small $0<\varepsilon\ll 1$. Finally, we use the notation
$$\| \cdot \|=\| \cdot \|_{L^2_{t,x}(\R^{1+3})} \quad \text{or} \quad \| \cdot \|_{L^2_{x}(\R^{3})}$$
depending on the context.

The Fourier transform in space and space-time are given by
\begin{align*}
\mathcal F_{x} (f)(\xi)=&\widehat f(\xi)=\int_{\R^3} e^{ -i x  \cdot \xi} f(x) \ dx,
\\
\mathcal F_{t, x} (u)(\tau,\xi)&=\widetilde u(\tau, \xi)=\int_{\R^{1+3}} e^{ -i(t\tau+ x \cdot \xi)} u(t,x) \ dt dx.
\end{align*}
Now
consider an even function $ \chi \in C_0^\infty((-2, 2))$ such that 
$\chi (s)=1 $ if $|s|\le 1$. We define 
\begin{align*}
\rho_{\lambda}(s)&=\begin{cases}
& 0, \quad \text{if} \ 0<\lambda <1,
\\
& \chi(s), \quad \text{if} \ \lambda=1,
\\
&\chi\left(\frac{s}{\lambda}\right)-\chi \left(\frac{2s}{\lambda}\right),\quad \text{if} \ \lambda>1
\end{cases}
 \end{align*}
 and
 \begin{align*}
 \sigma_{\lambda}(s)&=\chi\left(\frac{s}{\lambda}\right)-\chi \left(\frac{2s}{\lambda}\right) \ \text{for} \ \lambda>0.
 \end{align*}
Thus, $\supp \rho_{1}= \{ s\in \R: |s|<2 \}$ whereas $\supp \rho_{\lambda}= 
\{ s\in \R: \frac\lambda 2<|s|<2\lambda \}$ for $\lambda>1$. Similarly, 
 $\supp \sigma_{\lambda}= 
\{ s\in \R: \frac\lambda 2<|s|<2\lambda \}$ for all $\lambda>0$.
Then we define the frequency and modulation projections by
\begin{align*}
P_{\lambda} f &=\mathcal F_{x}^{-1} [\rho_\lambda(|\xi|)\widehat { f}(\xi) ],
\\
\Lambda^\pm_\lambda u&= \mathcal F_{t, x}^{-1}[\sigma_\lambda(|\tau\pm \angles{\xi}_m|)\widetilde{ u}(\tau, \xi)] .
 \end{align*}
Define also
\begin{align*}
\Lambda^\pm_{\ge \lambda} &=\sum_{\mu\ge \lambda }\Lambda^\pm_{\mu} ,
\quad
\Lambda^\pm_{< \lambda} =1-\Lambda^\pm_{\ge \lambda} .
 \end{align*}

\subsection{Function spaces: $U^p$ and $V^p$ spaces }
These function spaces
 were originally introduced 
in the unpublished work of Tataru on the wave map problem and 
then in Koch-Tataru \cite{KT07} in the context of NLS. The spaces
have since been used to obtain critical results in different problems 
related to dispersive equations (see eg. \cite{HHK09, HHK09-1,  HTT11}) and 
they serve as a useful replacement of $X^{s, b}$-spaces in the limiting cases. 
For the convenience of the reader we list the definitions and some properties of these spaces.

 Let $\mathcal{Z}$ be the collection of finite partitions 
 $-\infty < t_0 < \cdots < t_K \leq \infty$ of $\R$. If $t_K=\infty$, we use the convention $u(t_K) :=0$
 for all functions $u:\R\to L^2$.

 \begin{definition} \label{DefUp}
Let $1\leq p < \infty$.
A $U^p$-atom is defined by a step function $a:\R\to L^2$
of the form
$$
 a(t) = \sum_{k = 1}^K  \mathbb{1}_{[t_{k-1}, t_k) (t)} \phi_{k - 1}, 
 $$
where $$\{t_k\}_{k = 0}^K \in \mathcal{Z}, \quad 
\{\phi_k\}_{k = 0}^{K-1} \subset L^2 \ 
\text{with} \ \sum_{k = 0}^{K-1} \|\phi_k\|_{L^2}^p = 1.$$
The atomic space $U^p(\R; L^2)$
is defined to be the collection of functions $u:\R\to L^2$
of the form
\begin{equation} \label{Up}
u = \sum_{j = 1}^\infty \lambda_j a_j,
\ \text{ where $a_j$'s are $U^p$-atoms and} \ \{\lambda_j\}_{j \in \mathbb{N}}\in \ell^1,
\end{equation}
with the norm
$$\|u\|_{U^p} : = \inf_{\text{representation \eqref{Up} }}  \sum_{j=1}^{\infty} |\lambda_j|.
$$
\end{definition}

\begin{definition} \label{DefVp}
Let $1\leq p<\infty$. 

 \begin{enumerate}
\item \label{V:def} define $V^p(\R, L^2)$ as the space of all
  functions $v:\R\to L^2$ for which the norm
  \begin{equation}\label{Vp-norm}
    \|v\|_{V^p}:=\sup_{\{t_k\}_{k=0}^K \in \mathcal{Z}}
    \left(\sum_{k=1}^{K}
      \|v(t_{k})-v(t_{k-1})\|_{L^2}^p\right)^{\frac{1}{p}}
  \end{equation}
  is finite. 
  
 \item \label{V-:def} Likewise, let $V^p_-(\R, L^2)$ denote the normed space of all
  functions $v:\R\to L^2$ such that $\lim_{t\rightarrow -\infty} v(t)=0$ and $\|v\|_{V^p} < \infty$, endowed with the norm
\eqref{Vp-norm}.

\item \label{Vcr:def}  We let $V^p_{rc}(\R, L^2)$ 
  ($V^p_{-,rc} (\R, L^2)$) denote the closed subspace of all right-continuous 
  $V^p (\R, L^2)$ functions ($V^p_-(\R, L^2)$
  functions).

  \end{enumerate}

\end{definition}

\subsection{Properties of $U^p$ and $V^p$ spaces }
We collect some useful properties of these spaces. For more details about the spaces and
proofs we refer to \cite{HHK09, HHK09-1}.
\begin{proposition}\label{Prop-Up}
  Let $1\leq p< q < \infty$. Then we have the following:
  \begin{enumerate}
  \item \label{U-Banach} $U^p(\R, L^2)$ is a Banach space.
  \item\label{U-Emb} The embeddings $U^p(\R, L^2)\subset U^q(\R, L^2)\subset
    L^\infty(\R;L^2)$ are continuous.
  \item\label{U-RightCont} Every $u\in U^p(\R, L^2)$ is right-continuous. Moreover, $\lim_{t\to - \infty}u(t)=0$.
  
  \end{enumerate}
\end{proposition}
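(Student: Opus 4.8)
The final statement is Proposition~\ref{Prop-Up}, which collects three standard structural facts about $U^p$ spaces (Banach, embedding chain, right-continuity). Since these are well-known properties quoted from \cite{HHK09, HHK09-1}, the plan is to reconstruct the proofs from the atomic definition rather than to do anything novel. I will treat each item in turn.

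\textbf{Item (i): $U^p(\R,L^2)$ is a Banach space.} First I would check that $\|\cdot\|_{U^p}$ is a norm. Non-negativity and homogeneity are immediate from the definition of the atomic decomposition. For the triangle inequality, given $u = \sum_j \lambda_j a_j$ and $v = \sum_k \mu_k b_k$ near-optimal representations, concatenating them gives a representation of $u+v$ with coefficient sum at most $\|u\|_{U^p} + \|v\|_{U^p} + 2\epsilon$. Definiteness: if $\|u\|_{U^p} = 0$, then via the embedding into $L^\infty(\R;L^2)$ (proved in item (ii), using that each atom has $\|a\|_{L^\infty_t L^2_x} \le 1$ since the $\phi_k$ satisfy $\sum \|\phi_k\|^p = 1$ with $p \ge 1$, hence each $\|\phi_k\| \le 1$), we get $u = 0$. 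Completeness then follows from the standard criterion: a normed space is complete iff every absolutely convergent series converges. Given $\sum_n \|u_n\|_{U^p} < \infty$, pick for each $u_n$ a representation with coefficient sum $\le \|u_n\|_{U^p} + 2^{-n}$; the doubly-indexed family of atoms with the combined coefficients is then an $\ell^1$ family of atoms, so $\sum_n u_n$ converges in $U^p$ to the function it defines. The one point requiring a little care is that this sum of step functions genuinely defines a function $\R \to L^2$ and that the atomic-decomposition infimum is achieved as a limit; this is handled by the $L^\infty_t L^2_x$ bound on atoms, which makes the series converge uniformly in $t$.

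\textbf{Item (ii): the embeddings $U^p \subset U^q \subset L^\infty(\R;L^2)$.} For $U^q \subset L^\infty(\R;L^2)$: every $U^q$-atom $a$ satisfies $\sup_t \|a(t)\|_{L^2} = \max_k \|\phi_k\|_{L^2} \le \left(\sum_k \|\phi_k\|^q\right)^{1/q} = 1$, so summing the atomic decomposition gives $\|u\|_{L^\infty_t L^2_x} \le \sum_j |\lambda_j| $, and taking the infimum yields $\|u\|_{L^\infty_t L^2_x} \le \|u\|_{U^q}$. For $U^p \subset U^q$ with $p < q$: it suffices to show every $U^p$-atom is (a bounded multiple of) a $U^q$-atom, or more precisely that $\|a\|_{U^q} \le 1$ for a $U^p$-atom $a$. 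Since $p < q$ and $\sum_k \|\phi_k\|_{L^2}^p = 1$ forces $\|\phi_k\|_{L^2} \le 1$, we get $\sum_k \|\phi_k\|_{L^2}^q \le \sum_k \|\phi_k\|_{L^2}^p = 1 \le 1$, so $a$ is itself a $U^q$-atom up to normalizing by a constant $\le 1$; hence $\|a\|_{U^q} \le 1$ and the inclusion is norm-decreasing. Combining gives the continuous chain.

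\textbf{Item (iii): right-continuity and decay at $-\infty$.} For a single atom $a(t) = \sum_{k=1}^K \mathbbm{1}_{[t_{k-1},t_k)}(t)\phi_{k-1}$, right-continuity in the $L^2$-sense is clear because the indicator functions $\mathbbm{1}_{[t_{k-1},t_k)}$ are right-continuous and there are finitely many jumps, and $a(t) = 0$ for $t < t_0$. For a general $u = \sum_j \lambda_j a_j$, I would fix $t_0 \in \R$ and $\epsilon > 0$, truncate the series to a finite partial sum $u_N$ with $\|u - u_N\|_{L^\infty_t L^2_x} \le \|u-u_N\|_{U^p} < \epsilon/3$ (using item (ii)), note that $u_N$ is a finite sum of atoms hence right-continuous, choose $\delta$ so that $\|u_N(t) - u_N(t_0)\|_{L^2} < \epsilon/3$ for $t_0 \le t < t_0 + \delta$, and conclude $\|u(t) - u(t_0)\|_{L^2} < \epsilon$ by the triangle inequality — the uniform-in-$t$ $L^2$ bound is exactly what makes this $3\epsilon$-argument go through. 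The same truncation argument, using that each atom vanishes for $t$ below the first node of its partition and that $u_N$ is a finite sum, shows $\lim_{t \to -\infty} \|u(t)\|_{L^2} = 0$.

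The main obstacle, if any, is purely bookkeeping: making sure the atomic series converges in the strong enough sense (uniformly in $t$, in $L^2_x$) that one may interchange the sum with pointwise-in-$t$ evaluation and with limits in $t$. That is precisely where the embedding into $L^\infty(\R;L^2)$ from item~(ii) is used repeatedly, so the natural order is to establish the atom bound $\|a\|_{L^\infty_t L^2_x} \le 1$ first, deduce item~(ii), and then feed it into items~(i) and~(iii). None of the steps is deep; the proposition is recorded here only for the reader's convenience, and a full proof can be found in \cite{HHK09, HHK09-1}.
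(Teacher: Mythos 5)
Your reconstruction is correct, and it follows the standard argument from Hadac–Herr–Koch, which is exactly what the paper invokes without reproducing a proof (the proposition is stated here as a known fact, citing \cite{HHK09, HHK09-1}). In particular, the order you identify — establish the atom bound $\|a\|_{L^\infty_t L^2_x}\le 1$ first, deduce the embedding chain via the $\ell^p\subset\ell^q$ nesting, and then feed the $L^\infty$ control into the completeness and right-continuity/decay arguments — is the logical structure used in those references, and each step is handled correctly.
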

 
\begin{proposition}\label{Prop-Vp}Let $1\leq p<q <\infty$. Then we have the following:
  \begin{enumerate}
\item\label{V-Spaces} The spaces $V^p(\R, L^2)$, 
$V^p_{rc}(\R, L^2)$, $V^p_-(\R, L^2)$ and $V^p_{-,rc}(\R, L^2)$
  are Banach spaces.
\item \label{V-emb1} The embedding $U^p (\R, L^2) \subset V_{-,rc}^p (\R, L^2)$ is
  continuous.
\item \label{V-emb2} The embeddings $V^p (\R, L^2)\subset V^q (\R, L^2)$ and
  $V^p_-(\R, L^2) \subset V^q_- (\R, L^2)$ are continuous.
\item \label{V-embCont} The embedding $V^p_{-,rc}(\R, L^2) \subset U^q (\R, L^2)$ is continuous.
\end{enumerate}
\end{proposition}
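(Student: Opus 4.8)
The plan is to treat the first three assertions as consequences of one elementary observation and to reserve the real work for the last inclusion. The observation is the contractive embedding $V^p(\R,L^2)\hookrightarrow L^\infty_t(\R;L^2_x)$ with $\sup_t\norm{v(t)}_{L^2}\le\norm{v}_{V^p}$, which I would obtain at once by evaluating the $V^p$-norm on the two-point partition $\{t,\infty\}$ and using the convention $v(\infty)=0$; in particular $V^p$-convergence implies uniform $L^2$-convergence. Completeness of $V^p$ then follows by the usual argument: for a Cauchy sequence $(v_n)$ the pointwise $L^2$-limits $v(t)$ exist, and for each fixed partition the $p$-variation sum of $v$ (resp.\ of $v_n-v$) is the limit of those of $v_n$ (resp.\ of $v_n-v_m$), so taking suprema over partitions gives $v\in V^p$ and $\norm{v_n-v}_{V^p}\to0$. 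Since a uniform $L^2$-limit preserves the condition $\lim_{t\to-\infty}v(t)=0$, the subspace $V^p_-$ is closed in $V^p$, while $V^p_{rc}$ and $V^p_{-,rc}$ are closed by their very definition; all three are thus Banach as closed subspaces of a Banach space.

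For the embeddings $V^p\subset V^q$ and $V^p_-\subset V^q_-$ ($p<q$) I would argue partition by partition: writing $a_k=\norm{v(t_k)-v(t_{k-1})}$, the elementary bound $\sum_k a_k^q\le(\sup_k a_k)^{q-p}\sum_k a_k^p\le(\sum_k a_k^p)^{q/p}$ gives $(\sum_k a_k^q)^{1/q}\le(\sum_k a_k^p)^{1/p}$, and a supremum over partitions yields $\norm{v}_{V^q}\le\norm{v}_{V^p}$; the restriction to the $-$-subspaces is automatic since they carry the same condition at $-\infty$. For $U^p\subset V^p_{-,rc}$ it is enough to bound a single $U^p$-atom $a=\sum_{k=1}^K\mathbb{1}_{[t_{k-1},t_k)}\phi_{k-1}$ with $\sum_k\norm{\phi_k}^p=1$ in $V^p$: along any partition the values of $a$ run, in order of increasing $t$, through a subsequence (with repetitions) of $0,\phi_0,\dots,\phi_{K-1},0$, so convexity of $s\mapsto s^p$ bounds the $p$-variation sum by $2^p\sum_k\norm{\phi_k}^p=2^p$, giving $\norm{u}_{V^p}\lesssim\norm{u}_{U^p}$; combined with the right-continuity and vanishing at $-\infty$ of $U^p$-functions (Proposition~\ref{Prop-Up}) this is the claimed continuous inclusion.

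The heart of the proposition is $V^p_{-,rc}\subset U^q$. Given $v\in V^p_{-,rc}$, $v\ne0$, I would build a dyadic level-set decomposition via stopping times: set $t^n_0=-\infty$ and $t^n_{j+1}=\inf\{t>t^n_j:\norm{v(t)-v(t^n_j)}_{L^2}\ge2^{-n}\}$ (with $\inf\emptyset=\infty$) for $n\in\Z$. Right-continuity of $v$ forces $\norm{v(t^n_{j+1})-v(t^n_j)}\ge2^{-n}$ whenever $t^n_{j+1}<\infty$, so the number $M_n$ of finite stopping times satisfies $M_n 2^{-np}\le\norm{v}_{V^p}^p$; hence for each $n$ only finitely many $t^n_j$ are finite and they partition $\R$. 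Setting $v^n(t)=\sum_j\mathbb{1}_{[t^n_j,t^n_{j+1})}(t)\,v(t^n_j)$, one has $\norm{v-v^n}_{L^\infty_tL^2_x}\le2^{-n}$ by construction, and $v^n\equiv0$ once $2^{-n}>\norm{v}_{V^p}$ because $v(-\infty)=0$; so choosing $n_0$ with $2^{-n_0}\sim\norm{v}_{V^p}$ and $v^{n_0}\equiv0$, the telescoping sum $v=\sum_{n>n_0}(v^n-v^{n-1})$ converges uniformly in $L^2$. Each $v^n-v^{n-1}$ is a step function, constant on the $\lesssim M_n+M_{n-1}\lesssim2^{np}\norm{v}_{V^p}^p$ pieces of the common refinement, with values of size $\le3\cdot2^{-n}$; it is thus a scalar multiple of a $U^q$-atom with $\norm{v^n-v^{n-1}}_{U^q}\lesssim(2^{np}\norm{v}_{V^p}^p)^{1/q}2^{-n}=2^{-n(q-p)/q}\norm{v}_{V^p}^{p/q}$. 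Since $q>p$ the series $\sum_{n>n_0}\norm{v^n-v^{n-1}}_{U^q}$ is geometric with sum $\lesssim2^{-n_0(q-p)/q}\norm{v}_{V^p}^{p/q}\sim\norm{v}_{V^p}$; as $U^q$ is complete and embeds in $L^\infty_tL^2_x$ (Proposition~\ref{Prop-Up}), the series converges in $U^q$, necessarily to $v$, giving $v\in U^q$ with $\norm{v}_{U^q}\lesssim\norm{v}_{V^p}$.

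The routine parts are the first three items — they rest only on the contractive bound $\norm{v(t)}_{L^2}\le\norm{v}_{V^p}$, $\ell^p\hookrightarrow\ell^q$ monotonicity, and a one-line atom estimate. I expect the main obstacle to be the decomposition in $V^p_{-,rc}\subset U^q$: one must engineer the stopping times so that the number of pieces of $v^n-v^{n-1}$ grows only like $2^{np}$ while their $L^2$-size decays like $2^{-n}$, and it is precisely the strict inequality $q>p$ that renders the resulting $U^q$-series summable; obtaining the \emph{linear} dependence $\norm{v}_{U^q}\lesssim\norm{v}_{V^p}$ (rather than a power) additionally requires cutting the series at the level $n_0$ dictated by $\norm{v}_{V^p}$ and balancing the two resulting factors.
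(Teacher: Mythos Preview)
The paper does not actually prove this proposition; it merely states the result and refers the reader to \cite{HHK09, HHK09-1} for the proofs. Your argument is correct and is essentially the standard one found in those references: the first three items are soft consequences of the contractive embedding $V^p\hookrightarrow L^\infty_t L^2_x$, the $\ell^p\hookrightarrow\ell^q$ inequality, and a direct atom computation, while the inclusion $V^p_{-,rc}\subset U^q$ is obtained via the greedy stopping-time decomposition of Koch--Tataru, exactly as you outline. There is nothing to compare against here, and no gap in your reasoning.
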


\begin{lemma}[See \cite{KTV14}]\label{Lemma:Interp}
Let $p>2$ and $v\in V^2(\R, L^2)$. There exists $L=L(p)>0$ such that 
for all $N\ge 1$, there exist $w \in U^2(\R, L^2)$ and $z \in U^p(\R, L^2)$ with 
  $$ v=   w+   z$$ and 
\begin{equation*}
  \frac{L}N \norm { w}_{U^2}+ e^N
   \norm { z}_{U^p}\lesssim \norm { v}_{V^2}.
\end{equation*}
\end{lemma}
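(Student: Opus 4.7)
The plan is a stopping-time/level-set decomposition in the spirit of Koch--Tataru--Visan. After normalizing $\|v\|_{V^2} = 1$ and using $\lim_{t\to-\infty} v(t) = 0$ (so that the resulting atomic expansions will lie in $U^p$), I construct for each integer $n \geq 0$ a sequence of stopping times $\tau_0^n = -\infty < \tau_1^n < \cdots < \tau_{K_n}^n = +\infty$ inductively by
\[
  \tau_{k+1}^n = \inf\bigl\{\, t > \tau_k^n : \|v(t) - v(\tau_k^n)\|_{L^2} \geq 2^{-n}\,\bigr\}.
\]
By construction each of the first $K_n - 1$ increments contributes at least $2^{-2n}$ to $\|v\|_{V^2}^2$, giving the counting bound $K_n \lesssim 4^n$. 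Setting $v_n(t) := v(\tau_k^n)$ for $t \in [\tau_k^n, \tau_{k+1}^n)$ then produces a piecewise-constant approximation with $\sup_t \|v(t) - v_n(t)\|_{L^2} \leq 2^{-n}$, so $v_n \to v$ uniformly in $L^2$, and in particular the telescoping series $v = \sum_{n\geq 0} (v_n - v_{n-1})$ (with $v_{-1}:=0$) makes sense.

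The main technical step is to estimate each telescoping difference $v_n - v_{n-1}$ in $U^p$. On the common refinement of the level-$n$ and level-$(n{-}1)$ partitions, this difference is a step function with $K \leq K_n + K_{n-1} \lesssim 4^n$ pieces whose $L^2$-values are each bounded by $2^{-n+1}$. Viewing it as a scalar multiple of a $U^p$-atom in the sense of Definition \ref{DefUp} (where the normalization is $\sum_k \|\phi_k\|_{L^2}^p = 1$) yields
\[
  \|v_n - v_{n-1}\|_{U^p} \lesssim \bigl(4^n\bigr)^{1/p} \cdot 2^{-n} = 2^{-n(1 - 2/p)},
\]
which decays geometrically for $p > 2$; the corresponding $U^2$-bound is just $\|v_n - v_{n-1}\|_{U^2} \lesssim 1$ uniformly in $n$.

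Given $N \geq 1$, I split at level $M := \lceil N/((1 - 2/p)\ln 2)\rceil$, setting $w := v_M$ and $z := v - v_M = \sum_{n > M}(v_n - v_{n-1})$. Telescoping and the atomic bounds give $\|w\|_{U^2} \lesssim M$ (with implicit constant depending on $p$) and
\[
  \|z\|_{U^p} \lesssim \sum_{n > M} 2^{-n(1-2/p)} \lesssim 2^{-M(1-2/p)} \leq e^{-N}.
\]
Hence $\tfrac{L}{N}\|w\|_{U^2} + e^N\|z\|_{U^p} \lesssim L \cdot C(p)/(1 - 2/p) + 1$, which is $\lesssim 1 = \|v\|_{V^2}$ once $L = L(p)>0$ is chosen sufficiently small. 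The main obstacle is the technical cleanup of the stopping-time construction when $v$ is not right-continuous: one either first approximates $v$ by an element of $V^2_{rc}$ (using the embeddings and density properties around Proposition \ref{Prop-Vp}) and passes to the limit in the decomposition, or else allows the values $v(\tau_k^n)$ to be appropriate one-sided limits and verifies directly that the resulting step functions are honest elements of $U^p$ with the claimed norm bounds. Everything else is elementary counting and the interpolation identity $4^{n/p} \cdot 2^{-n} = 2^{-n(1 - 2/p)}$.
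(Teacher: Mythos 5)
The paper does not prove Lemma \ref{Lemma:Interp}; it cites \cite{KTV14}. Your stopping-time construction is exactly the standard argument that appears there (originally Hadac--Herr--Koch, Proposition~2.20 of \cite{HHK09}): the counting bound $K_n\lesssim 4^n$, the telescoping into step functions with $\|v_n-v_{n-1}\|_{U^p}\lesssim 2^{-n(1-2/p)}$, and the cutoff level $M\sim N/(1-2/p)$ all check out. The caveat you flag is genuine and is in fact the only way the statement can be literally true: every element of $U^2+U^p$ is right-continuous with vanishing limit at $-\infty$, so the hypothesis must be $v\in V^2_{-,\mathrm{rc}}$ rather than the $V^2$ written in the paper; with that reading your proof is complete.
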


\begin{theorem}
\label{thm-dual}
  Let $1<p<\infty$. Then 
  $$ V^p=(U^p)^\ast $$
  in the sense that there is a bilinear form $B$ such that the mapping
  $$
  T: V^p\rightarrow (U^p)^\ast, \quad T(v):=B(\cdot, v)
  $$
  is an isometric isomorphism.
\end{theorem}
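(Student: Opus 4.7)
The plan is to follow the classical duality argument of Koch--Tataru, as presented in Hadac--Herr--Koch \cite{HHK09}. I would proceed in four steps: construct the bilinear form $B$ explicitly via Riemann--Stieltjes-type partition sums, verify boundedness by Hölder's inequality, show that the induced map is an isometry, and finally establish surjectivity.

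First, the construction. On a $U^p$-atom $a = \sum_{k=1}^K \mathbb{1}_{[t_{k-1}, t_k)} \phi_{k-1}$ and any $v \in V^p$, I set
$$B(a, v) := \sum_{k=1}^K \innerprod{\phi_{k-1}}{v(t_k) - v(t_{k-1})}.$$
This is then extended to an arbitrary $u \in U^p$ by linearity using the atomic decomposition $u = \sum_j \lambda_j a_j$; the value is independent of the chosen decomposition because on a common refinement of two partitions the sum merely rearranges, using $v(t) - v(t)=0$ on interior insertions.

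Second, boundedness. Applying Hölder's inequality with conjugate exponents,
$$|B(a, v)| \leq \Big(\sum_k \norm{\phi_{k-1}}^p\Big)^{1/p} \Big(\sum_k \norm{v(t_k) - v(t_{k-1})}^{p'}\Big)^{1/p'} \leq \norm{v}_{V^p},$$
where the first factor equals $1$ by the atom normalization and the second is controlled by the $V$-norm (interpreted with the dual exponent convention consistent with the theorem statement). This extends to $|B(u,v)| \leq \norm{u}_{U^p}\norm{v}_{V^p}$ by the atomic representation. Third, $T$ is an isometry: $\norm{T(v)}_{(U^p)^*} \leq \norm{v}_{V^p}$ follows from Step 2, and for the reverse I pick, for each $\varepsilon > 0$, a partition nearly realizing $\norm{v}_{V^p}$ and vectors $\phi_{k-1}$ dual to $v(t_k) - v(t_{k-1})$ via Hölder saturation, then assemble them (after rescaling so that $\sum \norm{\phi_{k-1}}^p = 1$) into a $U^p$-atom of norm $\leq 1$ pairing with $v$ to give at least $\norm{v}_{V^p} - \varepsilon$.

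The fourth step is the main obstacle: surjectivity. Given $L \in (U^p)^*$, I construct $v : \R \to L^2$ pointwise by the rule
$$\innerprod{v(t)}{\phi} := L\bigl(\mathbb{1}_{(-\infty,\, t)}\phi\bigr), \quad \phi \in L^2,$$
noting that $\mathbb{1}_{(-\infty, t)}\phi$ is $\norm{\phi}_{L^2}$ times a simple $U^p$-atom (a two-step partition $\{-\infty, t\}$), so the right-hand side is a bounded conjugate-linear functional of $\phi$ with norm $\leq \norm{L}\,\norm{\phi}_{L^2}$, thereby defining $v(t) \in L^2$. The hard part is verifying $v \in V^p$ with $\norm{v}_{V^p} \leq \norm{L}$: the trick is that by $\ell^p$--$\ell^{p'}$ duality, any finite partition sum $\bigl(\sum_k \norm{v(t_k) - v(t_{k-1})}^{p'}\bigr)^{1/p'}$ equals the supremum of $\sum_k \innerprod{\phi_{k-1}}{v(t_k) - v(t_{k-1})}$ over vectors with $\sum \norm{\phi_{k-1}}^p \leq 1$, and by construction this last quantity is exactly $L(a)$ for the associated $U^p$-atom $a$ of norm $\leq 1$, hence bounded by $\norm{L}$. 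The identity $T(v) = L$ is then checked by testing both sides on atoms, while right-continuity and $\lim_{t \to -\infty} v(t) = 0$ follow from the continuity of $L$ on the one-parameter family of indicator atoms together with the vanishing of $\mathbb{1}_{(-\infty, t)}$ pointwise as $t \to -\infty$.
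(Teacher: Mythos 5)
The paper itself does not supply a proof of Theorem~\ref{thm-dual}; it cites \cite{HHK09, HHK09-1} for proofs of the properties of $U^p$ and $V^p$ (see the sentence preceding Proposition~\ref{Prop-Up}). Your proposal follows the standard Hadac--Herr--Koch argument, which is the right reference, and the first three steps (construction of $B$, Hölder boundedness, isometry) are correctly sketched in spirit. However, there are two points worth flagging.

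First, the exponent. Hölder on a $U^p$-atom gives you the $\ell^{p'}$ sum of the increments $v(t_k)-v(t_{k-1})$, so what the argument actually produces is the identification $(U^p)^* = V^{p'}$ with $1/p + 1/p' = 1$; this is the statement in \cite{HHK09}. The paper's Theorem~\ref{thm-dual} reads $V^p = (U^p)^*$, which is literally correct only for $p = 2$ (the only case the paper uses, in \eqref{Jm-X}). You noticed the mismatch but chose to fold it into a vague ``dual exponent convention''; it would be cleaner to state outright that the proof yields $(U^p)^* = V^{p'}$ and that this coincides with the theorem's statement for $p=2$.

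Second, and more seriously, the surjectivity step has a genuine gap. You define $v$ by $\innerprod{v(t)}{\phi} := L\bigl(\mathbb{1}_{(-\infty,\,t)}\phi\bigr)$ and claim $\mathbb{1}_{(-\infty,\,t)}\phi$ is a multiple of a two-step atom with partition $\{-\infty, t\}$. But Definition~\ref{DefUp} requires finite partitions $-\infty < t_0 < \cdots < t_K \le \infty$ with $t_0 > -\infty$, so atoms are supported on $[t_0, \infty)$ and vanish for $t < t_0$; moreover every $u \in U^p$ satisfies $\lim_{t\to-\infty} u(t) = 0$ (Proposition~\ref{Prop-Up}(iii)). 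The function $\mathbb{1}_{(-\infty,\,t)}\phi$ fails this (it equals $\phi \neq 0$ near $-\infty$), so it is not in $U^p$ and $L$ cannot be applied to it. The correct test function is $\mathbb{1}_{[t,\,\infty)}\phi$, which \emph{is} $\|\phi\|_{L^2}$ times a bona fide $U^p$-atom (partition $\{t, \infty\}$, recall the convention $u(\infty)=0$). One then sets $\innerprod{v(t)}{\phi} := -L\bigl(\mathbb{1}_{[t,\,\infty)}\phi\bigr)$, after which the telescoping identity $\mathbb{1}_{[t_{k-1},\,t_k)} = \mathbb{1}_{[t_{k-1},\,\infty)} - \mathbb{1}_{[t_k,\,\infty)}$ recovers $T(v)=L$ on atoms, and your $\ell^p$--$\ell^{p'}$ duality argument then bounds $\|v\|_{V^{p'}}$ by $\|L\|$ as you intended. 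With this replacement the surjectivity argument goes through; as written, it does not.
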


\begin{proposition}
\label{prop-dual}
  Let $1<p<\infty$ and $u \in V_-^1$
be absolutely continuous on compact
intervals and $v \in V^p$.
Then 
  $$
 B(u, v)= -\int_{-\infty}^\infty \angles{u'(t), v(t)} \, dt.
  $$
\end{proposition}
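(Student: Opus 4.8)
The final statement to prove is Proposition~\ref{prop-dual}, identifying the bilinear form $B$ from the duality Theorem~\ref{thm-dual} with the pairing $-\int_{-\infty}^\infty \angles{u'(t), v(t)}\,dt$ when $u\in V_-^1$ is absolutely continuous on compact intervals and $v\in V^p$.

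The plan is to reduce to a direct computation on step functions and then pass to the limit. First I would recall the explicit construction of the bilinear form $B$ underlying the isometry $V^p = (U^p)^\ast$ from \cite{HHK09}: for a $U^p$-atom $a = \sum_{k=1}^K \mathbb{1}_{[t_{k-1},t_k)}\phi_{k-1}$ and $v \in V^p$, one has $B(a,v) = \sum_{k=1}^K \angles{\phi_{k-1}, v(t_k) - v(t_{k-1})}$ (with the convention $v(t_K)=0$ when $t_K=\infty$), and this extends by linearity and continuity to all $u \in U^p$. So the first step is to verify that, for $u \in V_-^1$ absolutely continuous on compacts, the right-hand side $-\int_{-\infty}^\infty \angles{u'(t), v(t)}\,dt$ is well-defined: since $u' \in L^1_t(L^2_x)$ (this is what $V_-^1$ plus absolute continuity on compacts buys us, together with the telescoping bound on variation) and $v \in V^p \subset L^\infty_t(L^2_x)$, the integrand is in $L^1_t$ and the integral converges absolutely.

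The core computation is the case where $u$ itself is a $U^1$-step function, say $u = \sum_{k=1}^K \mathbb{1}_{[t_{k-1},t_k)}\phi_{k-1}$; but since such $u$ is not absolutely continuous, the honest approach is instead to take $u \in V_-^1$ absolutely continuous on compacts and approximate it in an appropriate sense. I would fix a large interval $[-T, T]$, use that $u$ is absolutely continuous there so that $u(t) = u(-T) + \int_{-T}^t u'(s)\,ds$, and then approximate $u$ by step functions $u_n$ adapted to finer and finer partitions of $[-T,T]$ (e.g. $u_n$ equal to $u(t_{k-1})$ on $[t_{k-1}, t_k)$). A summation by parts / telescoping identity gives
\begin{align*}
B(u_n, v) &= \sum_k \angles{u(t_{k-1}), v(t_k) - v(t_{k-1})} = -\sum_k \angles{u(t_k) - u(t_{k-1}), v(t_k)} + \text{boundary terms},
\end{align*}
and $u(t_k) - u(t_{k-1}) = \int_{t_{k-1}}^{t_k} u'(s)\,ds$, so this Riemann-type sum converges to $-\int \angles{u'(s), v(s)}\,ds$ as the mesh goes to zero, using $v \in V^p \subset L^\infty_t(L^2_x)$ for the pairing to behave and dominated convergence. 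On the other side, $u_n \to u$ in $U^1$ (again using $u' \in L^1_t(L^2_x)$ to control $\|u - u_n\|_{U^1}$ via the atomic decomposition, since the increments of $u$ over the partition intervals have summable $L^2$ norms), hence $B(u_n, v) \to B(u, v)$ by continuity of $B$ in the first argument. Finally one sends $T \to \infty$, using $u(t) \to 0$ as $t \to -\infty$ (definition of $V_-^1$) and $u' \in L^1$ so the tails vanish.

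The main obstacle I anticipate is the bookkeeping at the right endpoint and at $t = +\infty$: the convention $v(t_K) := 0$ when $t_K = \infty$ must be handled carefully, and one needs $\angles{u(t), v(t)} \to 0$ as $t \to +\infty$ for the boundary terms in the summation-by-parts to drop out — this requires knowing that $u(t)$ has a limit as $t \to \infty$ (true since $u' \in L^1$) and combining it with the behavior of $v$; if $u(+\infty) \neq 0$ one must check the cancellation against the $v(t_K) = 0$ convention is consistent. A secondary technical point is confirming that $u_n \to u$ genuinely in the $U^1$-norm rather than merely pointwise, which is where the hypothesis ``absolutely continuous on compact intervals'' together with $u \in V^1_-$ is essential: it guarantees $\sum_k \|u(t_k) - u(t_{k-1})\|_{L^2}$ is finite and controlled, so the error step function $u - u_n$ is a small combination of $U^1$-atoms. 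Everything else is routine telescoping and dominated convergence.
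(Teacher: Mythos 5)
The paper does not prove Proposition \ref{prop-dual}; it is stated as one of the known facts about $U^p/V^p$ spaces and cited from \cite{HHK09} (where it is Proposition~2.10), so there is no in-paper argument to compare against. Your overall strategy --- unwinding the definition of $B$ on step functions, Abel summation, approximating $u$ by the adapted step functions $u_n$, and passing to the limit --- is the natural route and matches the standard argument.

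There is, however, a genuine gap in the step ``$u_n \to u$ in $U^1$.'' First, the phrase ``the error step function $u - u_n$'' is a misstatement: $u_n$ is a step function, but $u - u_n$ is not (it is continuous on each $[t_{k-1}, t_k)$ with a jump at $t_k$). Second, the reasoning you give --- summable increments of $u$ over partition intervals, from $u' \in L^1$ --- controls $\|u - u_n\|_{V^1}$, not $\|u - u_n\|_{U^1}$, and since $U^1 \subsetneq V^1_{-,rc}$ a $V^1$ bound does not yield a $U^1$ bound. Worse, the $V^1$ norm of $u - u_n$ does not go to zero as the mesh is refined: on each $[t_{k-1}, t_k)$ the error $u(t) - u(t_{k-1})$ rises from $0$ and jumps back to $0$ at $t_k$, so its variation on that block is comparable to $2\int_{t_{k-1}}^{t_k}\|u'(s)\|_{L^2}\,ds$, and summing over $k$ yields $\approx 2\|u'\|_{L^1_t L^2_x}$, which is bounded but not small. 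What does tend to zero is $\|u - u_n\|_{L^\infty_t L^2_x}$, and that rescues the argument through a different norm: for any $1 < p' < p$,
\[
\|u - u_n\|_{V^{p'}}^{p'} \le \bigl(2\|u - u_n\|_{L^\infty_t L^2_x}\bigr)^{p'-1}\,\|u - u_n\|_{V^1} \lesssim \|u - u_n\|_{L^\infty_t L^2_x}^{\,p'-1}\,\|u'\|_{L^1_t L^2_x} \longrightarrow 0,
\]
and since $u - u_n \in V^{p'}_{-,rc}$ (both $u$, being continuous, and $u_n$, by construction, are right-continuous and vanish as $t \to -\infty$), Proposition \ref{Prop-Vp}\eqref{V-embCont} gives $u_n \to u$ in $U^p$. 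This is all that is needed, because $B(\cdot, v)$ is a bounded linear functional on $U^p$ by Theorem \ref{thm-dual}; $U^1$-convergence is neither true in general nor necessary. The remainder of your sketch --- the Abel summation, the a.e.\ convergence of the Riemann sum using the existence of one-sided limits of $v$ and dominated convergence with the bound $v \in L^\infty_t L^2_x$, and the boundary terms handled by the convention $v(t_K):=0$ when $t_K=\infty$ together with $u(t_0)\to 0$ as $t_0 \to -\infty$ --- is correct.
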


\subsection{ $U^p_\pm$ and $V^p_\pm$ spaces and their properties}

We now introduce $U^p,V^p$-type spaces that are adapted to the linear propagators $
S_m(\pm t)=e^{\mp it\angles{D}}.
$ 
\begin{definition}\label{UV-Propag}
We define $U^p_{\pm}(\R, L^2) $ (and $V^p_{\pm }  (\R, L^2)$, respectively)
to be the spaces of all functions $u: \R \mapsto L^2(\R^3)$
such that $t\to S_m(\mp t)u $ is in
$U^p (\R, L^2) $ (resp. $V^p(\R, L^2) $), with the respective norms:
\begin{align*}
 \|u \|_{U^p_{\pm}  } &=
 \| S_m(\mp t)u\|_{U^p},
\\
\|u \|_{V^p_{\pm}  } &= \| S_m(\mp t) u\|_{V^p}. 
\end{align*}
We use $V^p_{\text{rc}, \pm} (\R, L^2) $
to denote the subspace of right-continuous functions in
$V^p_{\pm} (\R, L^2)$.

 \end{definition}

 \begin{remark}
   Lemma \ref{Lemma:Interp}
   naturally extends to the spaces
   $U^p_{\pm }(\R, L^2) $ and $V^p_{\pm }  (\R, L^2)$.
 \end{remark}

\begin{lemma} [Interpolation] \label{lm-interpu2ub}
Let $p>2$ and $u_j:= P_{\lambda_j} u_j $ ($j=1, \cdots, 4$). For $u_j  \in U^2_{\epsilon_j}$ and  $u_4  \in V^2_{\epsilon_4}$, where $\epsilon_j \in \{+, -\}$,
define 
$$
I(\lambda):=\Big|\int   V\ast \innerprod{\beta u_1}{  u_2}   \cdot \innerprod{\beta  u_3}{  u_4}
  \ dt dx\Big|.
  $$
 Assume that the following estimate hold:
 \begin{equation}\label{u2ub}
  I(\lambda) \lesssim  \min \left( C_1( \lambda) \prod_{j=1}^3  \norm{  u_j }_{U^2_{\epsilon_j}}  \norm{  u_4 }_{U^2_{\epsilon_4}},
C_2( \lambda) \prod_{j=1}^3  \norm{ u_j}_{U^2_{\epsilon_j}}  \norm{u_4 }_{U^p_{\epsilon_4}} \right).
   \end{equation}
Then 
\begin{equation}\label{u2v2}
  I(\lambda) \lesssim C_1(\lambda)\left[ 1+ \ln ( C_2( \lambda)) \right]\prod_{j=1}^3  \norm{  u_j }_{U^2_{\epsilon_j}}  \norm{u_4 }_{V^2_{\epsilon_4}}.
   \end{equation}
  
\end{lemma}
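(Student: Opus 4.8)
The plan is to deduce the $V^2$-estimate \eqref{u2v2} from the two $U^2$/$U^p$ estimates in \eqref{u2ub} by combining the interpolation splitting of Lemma \ref{Lemma:Interp} with an optimization over the parameter $N$. Concretely, I would regard $I(\lambda)$ as a multilinear functional in the last slot $u_4$, holding $u_1,u_2,u_3$ fixed, and write $\Phi(u_4) := I(\lambda)/\prod_{j=1}^3 \norm{u_j}_{U^2_{\epsilon_j}}$, so that \eqref{u2ub} says
\begin{equation*}
 \Phi(u_4) \lesssim C_1(\lambda)\norm{u_4}_{U^2_{\epsilon_4}}
 \quad\text{and}\quad
 \Phi(u_4)\lesssim C_2(\lambda)\norm{u_4}_{U^p_{\epsilon_4}}.
\end{equation*}
Since the claim is scale-invariant in $u_4$, it suffices to prove it for $\norm{u_4}_{V^2_{\epsilon_4}}=1$, or even to prove the slightly stronger bound on $\Phi(u_4)/\norm{u_4}_{V^2_{\epsilon_4}}$.

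Next I would apply Lemma \ref{Lemma:Interp} (in the $\pm$-adapted form noted in the Remark after Definition \ref{UV-Propag}): for each integer $N\ge 1$ split $u_4 = w_N + z_N$ with $w_N\in U^2_{\epsilon_4}$, $z_N\in U^p_{\epsilon_4}$ and
\begin{equation*}
 \frac{L}{N}\norm{w_N}_{U^2_{\epsilon_4}} + e^N\norm{z_N}_{U^p_{\epsilon_4}} \lesssim \norm{u_4}_{V^2_{\epsilon_4}}.
\end{equation*}
By linearity of $\Phi$ (or rather of $I(\lambda)$ in $u_4$, using the triangle inequality on the absolute value) I get
\begin{equation*}
 \Phi(u_4) \le \Phi(w_N) + \Phi(z_N)
 \lesssim C_1(\lambda)\norm{w_N}_{U^2_{\epsilon_4}} + C_2(\lambda)\norm{z_N}_{U^p_{\epsilon_4}}
 \lesssim \left( N\, C_1(\lambda) + e^{-N} C_2(\lambda)\right)\norm{u_4}_{V^2_{\epsilon_4}},
\end{equation*}
where the last step uses the bounds from the splitting. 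Now optimize in $N$: choosing $N$ to be the smallest integer with $N \ge \ln(C_2(\lambda))$ (and $N=1$ when $\ln C_2(\lambda)\le 1$) makes $e^{-N}C_2(\lambda) \lesssim C_1(\lambda)$ provided $C_1(\lambda)\gtrsim 1$ — which one may assume after harmlessly enlarging $C_1$ — so that the bracket is $\lesssim C_1(\lambda)\bigl[1 + \ln(C_2(\lambda))\bigr]$, which is exactly \eqref{u2v2}.

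I would then take a little care with the edge cases: if $C_2(\lambda)\lesssim C_1(\lambda)$ the second estimate in \eqref{u2ub} already dominates the first up to the embedding $V^2 \subset U^q$ for $q>2$, but here it is cleanest just to note $\ln C_2$ could be negative and replace it by $1+\ln(C_2)\vee 1$ throughout, i.e. absorb into the constant; the statement as written with $[1+\ln(C_2(\lambda))]$ implicitly assumes $C_2(\lambda)\gtrsim 1$, which holds in all applications. The only genuinely delicate point is the interplay between the integer constraint $N\ge 1$ in Lemma \ref{Lemma:Interp} and the size of $\ln C_2(\lambda)$, but this is handled by the $\max(1,\cdot)$ choice above and contributes only to the additive $1$ in the bracket. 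There is no real obstacle here — this is the standard Bourgain-type $U^p$–$V^p$ interpolation trick (as in \cite{HHK09}); the main thing to get right is simply the bookkeeping of the two competing terms $N\,C_1$ and $e^{-N}C_2$ and verifying that the optimal $N$ indeed yields the logarithmic loss rather than something worse.
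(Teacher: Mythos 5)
Your proposal is correct and follows essentially the same route as the paper: split $u_4 = w_N + z_N$ via Lemma~\ref{Lemma:Interp}, use the triangle inequality and the two hypotheses in~\eqref{u2ub} to bound $I(\lambda)\lesssim\bigl(N\,C_1(\lambda)+e^{-N}C_2(\lambda)\bigr)\prod_{j=1}^3\norm{u_j}_{U^2_{\epsilon_j}}\norm{u_4}_{V^2_{\epsilon_4}}$, then set $N\sim 1+\ln C_2(\lambda)$. The only difference is that you unnecessarily insist $N$ be an integer (the lemma allows any real $N\ge 1$, and the paper just takes $N=1+\ln C_2(\lambda)$), and you spell out the implicit assumption $C_1(\lambda)\gtrsim 1$ needed to absorb the $e^{-N}C_2(\lambda)$ term, which the paper leaves tacit.
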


\begin{proof}

Given $N\ge 1$, we use Lemma \ref{Lemma:Interp} 
to decompose $ u_4 \in V^2_{\epsilon_4}$ into 
   $ u_4=   u +   v$, where 
   $u \in U^2_{\epsilon_4}$ and $v \in U^p_{\epsilon_4}$,
   such that
\begin{equation}\label{interpest}
\left\{
\begin{aligned}
   \norm { u}_{U^2_{\epsilon_4}}&\lesssim \frac{N}{L} \norm { u_4}_{V^2_{\epsilon_4}},
   \\
   \norm { v}_{U^p_{\epsilon_4}}&\lesssim e^{-N}\norm {  u_4 }_{V^2_{\epsilon_4}}.
        \end{aligned}
\right.
   \end{equation}
We now use \eqref{u2ub} and \eqref{interpest} to obtain
  \begin{align*}
  I(\lambda)   & \lesssim C_1( \lambda) \prod_{j=1}^3  \norm{u_j}_{U^2_{\epsilon_j}}  \norm{u }_{U^2_{\epsilon_4}}
 +  C_2(\lambda) \prod_{j=1}^3  \norm{u_j }_{U^2_{\epsilon_j}}  \norm{v}_{U^p_{\epsilon_4}}
 \\
   &\lesssim \left[ \frac{N}{L} C_1(\lambda)  +   e^{-N}  C_2(\lambda)\right]\prod_{j=1}^3  \norm{u_j }_{U^2_{\epsilon_j}}  \norm{u_4 }_{V^2_{\epsilon_4}}.
   \end{align*}
This will imply the desired estimate \eqref{u2v2} if we choose
$$
N=1+ \ln [ C_2(\lambda) ].
$$ 
\end{proof}

\begin{lemma}[Modulation estimates; see \cite{HHK09}]\label{lm-modul}
Let $\lambda \in 2^k $, where $k\in \Z$, and $p\ge 2$. Then
\begin{align}
\label{modul1}
\| \Lambda^\pm_\lambda u \|_{L^2} &\lesssim \lambda^{-\frac12} \| u\|_{V^2_\pm}
\\
\label{modul2}
\| \Lambda^\pm_{\ge \lambda } u \|_{L^2} &\lesssim \lambda^{-\frac12} \| u\|_{V^2_\pm}
\\
\label{modul3}
\| \Lambda^\pm_{< \lambda } u \|_{V^p_\pm} &
\lesssim \| u\|_{V^p_\pm}, \quad 
\| \Lambda^\pm_{\ge \lambda } u \|_{V^p_\pm} 
\lesssim \| u\|_{V^p_\pm}
\\
\label{modul4}
\| \Lambda^\pm_{< \lambda } u \|_{U^p_\pm} &
\lesssim \| u\|_{U^p_\pm}, \quad 
\| \Lambda^\pm_{\ge \lambda } u \|_{U^p_\pm} 
\lesssim \| u\|_{U^p_\pm}.
\end{align}
\end{lemma}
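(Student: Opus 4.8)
The plan is to prove the four modulation estimates \eqref{modul1}--\eqref{modul4} by transporting everything to the companion propagator, i.e. by working with $v(t) = S_m(\mp t) u$ so that $\|u\|_{V^p_\pm} = \|v\|_{V^p}$ and $\|u\|_{U^p_\pm} = \|v\|_{U^p}$, and observing that the modulation projection $\Lambda^\pm_\lambda$ becomes a Fourier multiplier in $t$ after this conjugation: applying $\mathcal F_{t,x}$ and substituting the phase, $S_m(\mp t)\Lambda^\pm_\lambda u$ corresponds to multiplication of $\widetilde v$ by $\sigma_\lambda(|\tau|)$ (the shift by $\mp\angles\xi_m$ in the modulation variable is exactly undone). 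So \eqref{modul1} reduces to the one-variable statement: for $v \in V^2(\R, L^2)$, the Littlewood--Paley-in-time piece at temporal frequency $\sim\lambda$ satisfies $\|P^{(t)}_\lambda v\|_{L^2_{t,x}} \lesssim \lambda^{-1/2}\|v\|_{V^2}$, and likewise \eqref{modul2} with the high-frequency tail $|\tau|\gtrsim\lambda$.

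First I would establish \eqref{modul1}. The key point is the elementary inequality $\|\widehat{v}(\tau)\|_{L^2_x} \lesssim |\tau|^{-1}\|v\|_{V^2}$ for the Fourier transform in $t$ of a $V^2$ function (valid because $v$ has bounded $2$-variation, so in particular bounded total oscillation of its Fourier transform of this Abel-summed type; more precisely one writes $\widehat{v}(\tau)$ via an integration-by-parts/summation-by-parts against the jumps of $v$ and controls it by Cauchy--Schwarz in the partition). Then
\begin{align*}
\|\Lambda^\pm_\lambda u\|_{L^2_{t,x}}^2 = \|\sigma_\lambda(|\tau|)\widehat v(\tau)\|_{L^2_{\tau,x}}^2 \lesssim \int_{|\tau|\sim\lambda} |\tau|^{-2}\,d\tau \,\|v\|_{V^2}^2 \lesssim \lambda^{-1}\|v\|_{V^2}^2,
\end{align*}
which is \eqref{modul1}. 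Estimate \eqref{modul2} follows by summing the geometric series $\sum_{\mu\ge\lambda}\mu^{-1} \lesssim \lambda^{-1}$ after applying \eqref{modul1} to each dyadic block and using almost-orthogonality of the $\Lambda^\pm_\mu$ in $L^2$ (disjoint $\tau$-supports). For \eqref{modul3} and \eqml{modul4} I would note these are boundedness statements for a Fourier multiplier on $V^p$ and $U^p$ respectively: after conjugation, $\Lambda^\pm_{<\lambda}$ and $\Lambda^\pm_{\ge\lambda}$ act as convolution in $t$ with a kernel whose total mass is $O(1)$ (the smooth temporal cutoffs $\chi(\tau/\lambda)$ etc. have $L^1$-bounded inverse Fourier transforms uniformly in $\lambda$), and both $U^p$ and $V^p$ are translation-invariant and stable under averaging against finite measures; more cleanly, one invokes the transference principle from \cite{HHK09} that any bounded Fourier multiplier operator that commutes with time translations is bounded on $U^p$ and $V^p$ with the same norm. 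So \eqref{modul3}--\eqml{modul4} are immediate from $\|\sigma_\lambda\|_{M^p}\lesssim 1$.

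The main obstacle is the endpoint bookkeeping in \eqref{modul1}: one must be careful that a $V^2$ function need not be continuous or even have a genuine Fourier transform as an $L^2_t$ function, so the inequality $\|\widehat v(\tau)\|_{L^2_x}\lesssim|\tau|^{-1}\|v\|_{V^2}$ has to be interpreted and proved correctly — typically by first proving it for $U^2$ atoms (where $v$ is a finite step function and $\widehat v(\tau) = \sum_k (\phi_{k-1}-\phi_k)\frac{e^{-it_k\tau}}{i\tau}$ up to boundary terms, giving the $|\tau|^{-1}$ decay with an $\ell^2\hookrightarrow\ell^1$-free constant by Cauchy--Schwarz since there are boundedly many relevant jumps once localized to $|\tau|\sim\lambda$ — actually one uses that $\sum\|\phi_{k-1}-\phi_k\|_{L^2}^2$ is what is controlled), then passing to general $U^2$ by the atomic decomposition, and finally to $V^2$ via the embedding $V^2\subset U^p$ for $p>2$ together with the crude bound on $U^p$; alternatively one cites \cite{HHK09} directly, which is what I would do since the excerpt already references that source for this lemma. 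Everything else is routine multiplier theory and geometric summation.
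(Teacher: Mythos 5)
The paper gives no proof of this lemma: it is stated with a citation to \cite{HHK09} (the relevant results there being the high-modulation estimate and the boundedness of the sharp temporal cutoffs on $U^p$ and $V^p$), and you also note at the end that citing \cite{HHK09} is the intended reading, so at that level you match the paper. Your conjugation step (passing to $v=S_m(\mp t)u$ and observing that $\Lambda^\pm_\lambda$ then acts as the temporal Littlewood--Paley projection $\sigma_\lambda(|\tau|)$) is correct, and your argument for \eqref{modul3}--\eqref{modul4} via boundedness of time-Fourier multipliers with uniformly $L^1$-bounded kernels on $U^p$ and $V^p$ is standard and fine.

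The self-contained argument you sketch for \eqref{modul1}, however, has a genuine gap. The pointwise bound $\|\widehat v(\tau)\|_{L^2_x}\lesssim|\tau|^{-1}\|v\|_{V^2}$ is not correct. A general $V^2$ function is bounded but need not decay as $t\to+\infty$, so $\widehat v(\tau)$ is a priori only a tempered distribution and the inequality does not even make sense pointwise; and even for a finite step function, summation by parts gives $\widehat v(\tau)=(i\tau)^{-1}\sum_k\Delta_k e^{-it_k\tau}$ (up to boundary terms) where $\Delta_k$ are the jumps, and one cannot dominate $\bigl\|\sum_k\Delta_k e^{-it_k\tau}\bigr\|_{L^2_x}$ by $\bigl(\sum_k\|\Delta_k\|_{L^2_x}^2\bigr)^{1/2}$ via Cauchy--Schwarz without picking up $\sqrt K$, where $K$ is the (unbounded) number of jumps. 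The correct estimate is an integrated statement in $\tau$: the cancellation of the off-diagonal terms $\langle\Delta_j,\Delta_k\rangle e^{i(t_j-t_k)\tau}$ after integrating over $|\tau|\gtrsim\lambda$ is essential, and \cite{HHK09} prove the $V^2$ case directly using $U^2$--$V^2$ duality, not by summing a pointwise bound. Your fallback of passing from $U^2$ to $V^2$ through $V^2\subset U^p$ ($p>2$) together with the crude $U^p$ modulation bound also fails to recover the sharp exponent: $U^p$ only yields the gain $\lambda^{-1/p}$, so even after optimizing the decomposition of Lemma \ref{Lemma:Interp} over $N$ you land on $\lambda^{-1/2}\log\lambda$, not $\lambda^{-1/2}$. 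In fact \cite{HHK09} establish \eqref{modul2} first and derive \eqref{modul1} by differencing; your order relies on almost-orthogonality, which is fine, but both rest on the estimate whose proof sketch is the problematic part. Given the lemma is quoted rather than proved in this paper, the honest fix is the one you mention at the end: cite \cite{HHK09}.
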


\begin{lemma}\label{lm-transfer}
{\rm (Transfer principle)}
  Let $$T:L^2\times \cdots \times L^2\to L^1_{loc}(\R^3;\C)$$ be a
  multilinear operator and suppose that we have
\[ \big\| T(S_m(\pm t) \phi_1, \dots, S_m(\pm t) \phi_k)\big\|_{L^p_t L^r_x(\R\times \R^3)}
\lesssim \prod_{j = 1}^k \|\phi_j\|_{L^2_x ( \R^3)}\]
for some $1\leq p, r \leq \infty$.
Then
\[ \big\| T(u_1,  \dots, u_k)\big\|_{
L^p_t L^r_x(\R\times \R^3)}
\lesssim \prod_{j = 1}^k \|u_j\|_{U^p_{\pm} }.\]
\end{lemma}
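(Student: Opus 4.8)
The plan is the standard transfer argument via atomic decomposition. By Definition \ref{UV-Propag}, the hypothesis $u_j\in U^p_{\pm}$ says precisely that $S_m(\mp t)u_j\in U^p(\R,L^2)$; so, choosing for each $j$ a representation $S_m(\mp t)u_j=\sum_n\lambda_{j,n}b_{j,n}$ into $U^p$-atoms $b_{j,n}$ and setting $a_{j,n}:=S_m(\pm t)b_{j,n}$, we have $u_j=\sum_n\lambda_{j,n}a_{j,n}$. Since $T$ is multilinear — and these series converge in $L^\infty(\R;L^2)$ by Proposition \ref{Prop-Up}, so that $T$ of the partial sums converges to $T(u_1,\dots,u_k)$ in $\mathcal{D}'$ while the rearranged series of the $T(a_{1,n_1},\dots,a_{k,n_k})$ converges absolutely in $L^p_tL^r_x$ once the atomic bound below is known — we obtain
\[
\big\|T(u_1,\dots,u_k)\big\|_{L^p_tL^r_x}\le\sum_{n_1,\dots,n_k}|\lambda_{1,n_1}|\cdots|\lambda_{k,n_k}|\,\big\|T(a_{1,n_1},\dots,a_{k,n_k})\big\|_{L^p_tL^r_x}.
\]
Taking infima over the representations of the $u_j$, the whole lemma reduces to the bound $\|T(a_1,\dots,a_k)\|_{L^p_tL^r_x}\lesssim 1$ in the case where each $a_j=S_m(\pm t)b_j$ with $b_j$ a $U^p$-atom.

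For such atoms I would write $b_j=\sum_{l=1}^{K_j}\mathbb{1}_{[t^{(j)}_{l-1},t^{(j)}_l)}\phi^{(j)}_{l-1}$ with $\sum_l\|\phi^{(j)}_{l-1}\|_{L^2}^p=1$, so that $a_j(t)=S_m(\pm t)b_j(t)=\sum_l\mathbb{1}_{[t^{(j)}_{l-1},t^{(j)}_l)}(t)\,S_m(\pm t)\phi^{(j)}_{l-1}$, and then pass to the common refinement $\{s_0<\dots<s_M\}$ of the $k$ partitions $\{t^{(j)}_l\}_l$. On each $I_n=[s_{n-1},s_n)$ every $a_j$ coincides with $t\mapsto S_m(\pm t)\psi^{(j)}_n$ for a single vector $\psi^{(j)}_n$ among the values of $b_j$, so, enlarging each time integral from $I_n$ to $\R$ and invoking the hypothesis (with the obvious modification if $p=\infty$),
\[
\big\|T(a_1,\dots,a_k)\big\|_{L^p_tL^r_x}^p=\sum_{n=1}^M\int_{I_n}\big\|T\big(S_m(\pm t)\psi^{(1)}_n,\dots,S_m(\pm t)\psi^{(k)}_n\big)\big\|_{L^r_x}^p\,dt\lesssim\sum_{n=1}^M\prod_{j=1}^k\|\psi^{(j)}_n\|_{L^2}^p .
\]
The intervals $I_n$ of the refinement are exactly the nonempty intersections $J_1\cap\dots\cap J_k$ with $J_j$ an interval of the $j$-th original partition, and $\psi^{(j)}_n$ is then the value of $b_j$ on $J_j$; dropping the constraint that the $J_j$ meet only enlarges the sum, which then factorizes,
\[
\sum_{n=1}^M\prod_{j=1}^k\|\psi^{(j)}_n\|_{L^2}^p\le\prod_{j=1}^k\Big(\sum_{l=1}^{K_j}\|\phi^{(j)}_{l-1}\|_{L^2}^p\Big)=1 ,
\]
which is the claimed atomic bound. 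The same argument works verbatim when the propagators attached to the different arguments carry different signs $\epsilon_j\in\{+,-\}$, which is the form actually used later.

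There is no real obstacle here — this is the standard transfer principle and the proof is entirely routine — but two points deserve a word. First, the termwise application of $T$ to the (generally infinite) atomic series is handled by the embedding $U^p(\R,L^2)\subset L^\infty(\R;L^2)$ of Proposition \ref{Prop-Up} together with the absolute convergence in $L^p_tL^r_x$ of the rearranged series; in practice $T$ is always an explicit multilinear operator continuous into $\mathcal{D}'$, so this causes no trouble. Second, the bookkeeping for the common refinement: one must verify that replacing each atom's partition by a common one does not destroy the normalization $\sum_l\|\phi^{(j)}_{l-1}\|_{L^2}^p=1$, and this is automatic precisely because the multi-sum factorizes once the intersection constraint is dropped. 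Everything else is a direct appeal to the assumed estimate for free solutions.
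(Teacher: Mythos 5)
Your proof is correct and is the standard atomic-decomposition argument for the transfer principle (reduce to $U^p$-atoms, pass to the common refinement of their partitions, apply the free-solution estimate on each subinterval after extending the time integral to $\R$, and use the factorization of the resulting product sum over the unconstrained index tuple to recover the $\ell^p$-normalization). The paper states Lemma \ref{lm-transfer} without proof as a known fact from the $U^p/V^p$ literature, so there is nothing to contrast with; your argument, including the caveats about termwise application of $T$ to the atomic series and the degenerate case $p=\infty$, is exactly the proof the paper implicitly relies on.
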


%%%%%%%%%%%%%%%%%%%%%%%%%%%%%%%%%%%%%%%%%

%%%%%%%%%%%%%%%%%%%%%%%%%%%%%%%%%%%%%%%%%

\section{Linear and Bilinear estimates for free waves} \label{LinBi-Est}

 \subsection{Linear estimates}
The following Strichartz estimate for wave-admissible pairs is well-known.
\begin{lemma}[Wave-Strichartz]\label{lm-str}
Assume $m\ge 0$, $2\le r <\infty $ and $\frac1q + \frac1r=\frac12$. Then
$$
\norm{S_m(\pm t) f_\lambda}_{L^q_t L^r_x }\lesssim \lambda^{\frac{2}{q}} \norm{  f_\lambda }_{L^2}.
 $$
 Moreover, for all $u_\lambda \in U^q_\pm$, we have (by the transfer-principle) 
 $$
\norm{u_\lambda}_{L^q_t L^r_x }\lesssim \lambda^{\frac{2}{q}} \norm{  u_\lambda }_{U^q_\pm}.
 $$
\end{lemma}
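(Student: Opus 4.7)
The lemma has two assertions. The first is a dyadic Strichartz estimate for the Klein--Gordon-type propagator $S_m(\pm t) = e^{\mp it\angles{D}_m}$; the second is its $U^q_\pm$--version. My plan is to establish the first via the abstract Keel--Tao machinery applied to the operator $S_m(\pm t) P_\lambda$, and then deduce the second directly from Lemma \ref{lm-transfer}.

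For the linear estimate I would verify the two standard inputs required by Keel--Tao: first, the unitary bound $\norm{S_m(\pm t) f_\lambda}_{L^2_x} = \norm{f_\lambda}_{L^2_x}$, which is immediate from Plancherel since $S_m(\pm t)$ is a Fourier multiplier of modulus one; and second, the frequency-localized dispersive estimate
$$
\norm{S_m(\pm t) f_\lambda}_{L^\infty_x} \lesssim \lambda^2 |t|^{-1} \norm{f_\lambda}_{L^1_x}.
$$
Granted these two inputs, the Keel--Tao abstract Strichartz theorem produces exactly the claimed bound $\norm{S_m(\pm t) f_\lambda}_{L^q_t L^r_x} \lesssim \lambda^{2/q}\norm{f_\lambda}_{L^2}$ for every wave-admissible pair with $\tfrac1q + \tfrac1r = \tfrac12$ and $2 \le r < \infty$; the excluded endpoint $r = \infty$ (corresponding to $q = 2$) is the usual Knapp-type obstruction in three space dimensions and is already ruled out by the hypothesis on $r$.

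To prove the dispersive estimate I would write the kernel of $S_m(\pm t) P_\lambda$ as the oscillatory integral
$$
K_\lambda^\pm(t,z) = \int_{\R^3} e^{\mp i t \angles{\xi}_m + i z\cdot \xi}\, \rho_\lambda(|\xi|) \, d\xi,
$$
rescale $\xi = \lambda \eta$ when $\lambda \ge 1$ so the integral runs over a unit annulus with phase $\Phi(\eta) = \mp t \angles{\lambda\eta}_m + \lambda z \cdot \eta$, and apply stationary phase in polar coordinates. The angular integration over the sphere $\{|\eta|=r\}$ with $r\simeq 1$ produces the $|t|^{-1}$ decay (the Hessian of the radial phase in the tangential variables is nondegenerate with bounds uniform in $m\ge 0$), while the radial integration together with the $\lambda^3$ Jacobian contributes the $\lambda^2$ prefactor. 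The regime $\lambda < 1$ arises only when $m > 0$ is at least comparable to $\lambda$; there a direct Bernstein-type argument, or the observation that Klein--Gordon is \emph{more} dispersive than wave at those scales, yields the same bound with room to spare.

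The second assertion is then immediate from the transfer principle Lemma \ref{lm-transfer}, taking $k = 1$, $p = q$, and $T$ the identity map: its hypothesis is exactly the linear Strichartz bound just established. The only genuine point of care in the whole argument is the uniformity in $m$ of the dispersive estimate: for $m = 0$ it is the classical wave-kernel computation, and for $m > 0$ one must confirm that the relevant tangential curvature of $\angles{\xi}_m$ on $\supp \rho_\lambda$ is bounded below independently of $m$, which is a short direct check.
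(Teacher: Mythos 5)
The paper states this lemma without proof, calling it ``well-known,'' so there is no proof in the paper to compare against; I will evaluate your proposal on its own merits.

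Your overall architecture — verify unitarity and a frequency-localized $L^1\to L^\infty$ dispersive bound, invoke Keel--Tao for the linear estimate, then apply Lemma \ref{lm-transfer} — is exactly the standard route, and the transfer step at the end is fine. The gap is in the dispersive estimate and, more specifically, in the closing paragraph where you locate ``the only genuine point of care.'' The bound $\norm{S_m(\pm t) f_\lambda}_{L^\infty_x}\lesssim \lambda^2|t|^{-1}\norm{f_\lambda}_{L^1_x}$ is \emph{not} uniform in $m\ge 0$. After the angular integration over the sphere $\{|\eta|=r\}$ you obtain decay $\sim |z|^{-1}$ (the curvature that enters there is that of the sphere, $r^{-1}$, not of the dispersion surface), and you then implicitly promote $|z|^{-1}$ to $|t|^{-1}$. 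That promotion uses that the group velocity $|\xi|/\angles{\xi}_m$ is of order $1$ on $\supp\rho_\lambda$, so that the kernel lives where $|z|\gtrsim |t|$. When $m\gg\lambda$ the group velocity is $\sim\lambda/m\ll 1$, the kernel peaks at $|z|\sim|t|\lambda/\angles{\lambda}_m\ll |t|$, and the sharp frequency-localized wave-type bound is
\[
\norm{S_m(\pm t) f_\lambda}_{L^\infty_x}\lesssim \frac{\lambda\,\angles{\lambda}_m}{|t|}\,\norm{f_\lambda}_{L^1_x},
\]
which exceeds your $\lambda^2|t|^{-1}$ by a factor $\angles{\lambda}_m/\lambda\sim m/\lambda$ in the regime $m\gg\lambda$. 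Relatedly, the ``short direct check'' you propose would fail as stated: the tangential eigenvalue of $\nabla^2\angles{\xi}_m$ is $\angles{\xi}_m^{-1}$, which tends to $0$ as $m\to\infty$, so it is \emph{not} bounded below independently of $m$. (One already sees the breakdown at $z=0$: a Van der Corput estimate on $\int_{\lambda/2}^{2\lambda}r^2 e^{\pm it\angles{r}_m}\,dr$ gives $\sim \lambda m/|t|$ for $m\gg\lambda$ and $|t|$ large, not $\lambda^2/|t|$.)

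This does not sink the lemma: in the paper $m$ is a fixed constant and the dyadic convention forces $\lambda\ge 1$, so $\angles{\lambda}_m/\lambda\le\sqrt{1+m^2}$ is an $m$-dependent constant that can be absorbed into the implicit constant, and Keel--Tao with the corrected bound $C(m)\lambda^2|t|^{-1}$ still yields $\norm{S_m(\pm t)f_\lambda}_{L^q_tL^r_x}\lesssim_m\lambda^{2/q}\norm{f_\lambda}_{L^2}$. So the repair is minor: replace the claim of $m$-uniformity with the honest bound $\lambda\angles{\lambda}_m|t|^{-1}$, note that $\angles{\lambda}_m\lesssim_m\lambda$ for $\lambda\ge 1$, and proceed exactly as before. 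As written, though, the proposal asserts a false uniform dispersive estimate and gives an incorrect justification for it, and this is a concrete gap rather than a stylistic quibble.
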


\subsection{Bilinear estimates}

The following Lemma contains estimates for convolution of two free waves. This generalizes the result of Foschi–Klainerman for $m = 0$ 
\cite[Lemma 4.1 and Lemma 4.4]{FK00} to $m\ge 0$. 
 The proof is given in Section \ref{seclmI+-}.
\begin{lemma}[Convolution of free waves]\label{lmI+-}
For $m\ge 0$ define
\begin{align*}
I_+(f,g)(\tau, \xi)&= \int_{\R^3} f(|\eta|) g(|\xi-\eta|) \delta( \tau-\angles{\eta}_m- \angles{\xi-\eta}_m) \, d\eta,
\\
I_-(f, g)(\tau, \xi)&= \int_{\R^3} f(|\eta|) g(|\xi-\eta|) \delta( \tau-\angles{\eta}_m+ \angles{\xi-\eta}_m) \, d\eta.
\end{align*}
 Then the following hold:
 \begin{enumerate}[(i).]
 \item \label{lmI+-i} Estimate for $I_+$:
\begin{equation} \label{I+}
I_+(f, g)(\tau, \xi)
\simeq \frac{1}{|\xi|}\int_{a_- }^{a_+ } 
r(\tau-r) f\left( \sqrt{r^2-m^2}\right)  g\left( \sqrt{ (\tau-r)^2-m^2} \right)  
  dr,
  \end{equation}
where
 $$
 a_\pm:=a_\pm(\tau, \xi)=\frac\tau2 \pm \frac{|\xi|}2 \sqrt{\frac{\tau^2-|\xi|^2-4m^2}{\tau^2-|\xi|^2}}.
 $$
 \item \label{lmI+-ii} Estimate for $I_-$:
 \begin{equation} \label{I-}
\begin{split}
 I_-(f, g)(\tau, \xi)
& \simeq \frac1{|\xi|}\int_{a_+}^{\infty}
r(r-\tau) f\left( \sqrt{r^2-m^2}\right) g\left( \sqrt{ (r-\tau)^2-m^2} \right)
  dr.
  \end{split}
 \end{equation}
 
  \end{enumerate}

\end{lemma}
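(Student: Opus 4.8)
The plan is to compute each convolution integral explicitly by realizing the $\delta$-measure supported on a surface as an integral over the level sets of $|\eta|$, much as in Foschi--Klainerman, but carrying the mass $m$ through every step. First I would pass to polar coordinates in $\eta$. Write $|\eta| = \rho$, fix the polar axis along $\xi$, and let $\omega = \cos\vangle(\eta, \xi)$ so that $|\xi - \eta|^2 = |\xi|^2 + \rho^2 - 2|\xi|\rho\,\omega$. Then $d\eta = 2\pi \rho^2 \, d\rho \, d\omega$ over $\rho \in (0,\infty)$, $\omega \in (-1,1)$. In the integral for $I_+$, the $\delta$-factor becomes $\delta\big(\tau - \angles{\rho}_m - \angles{|\xi - \eta|}_m\big)$; I would change variables from $\omega$ to $r := \angles{\rho}_m$ on the first factor's orbit — actually it is cleaner to change variables from $\omega$ to $s := \angles{|\xi-\eta|}_m$, using
\[
\frac{\partial s}{\partial \omega} = \frac{-|\xi|\rho}{\angles{|\xi-\eta|}_m} = \frac{-|\xi|\rho}{s},
\]
so that $d\omega = \dfrac{s}{|\xi|\rho}\,ds$ (up to sign, absorbed by reversing limits). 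After this substitution the $\delta$ collapses the $s$-integral at $s = \tau - \angles{\rho}_m$, leaving a single integral in $\rho$ with integrand $\simeq \dfrac{1}{|\xi|}\,\rho\,(\tau - \angles{\rho}_m)\,f(\rho)\,g\big(\sqrt{(\tau-\angles{\rho}_m)^2 - m^2}\big)$, where the last radicand comes from inverting $s = \angles{|\xi-\eta|}_m$. Finally I would substitute $r := \angles{\rho}_m$, i.e. $\rho = \sqrt{r^2 - m^2}$, $\rho\,d\rho = r\,dr$, to reach exactly the form \eqref{I+} with integrand $\dfrac{1}{|\xi|} r(\tau - r) f(\sqrt{r^2-m^2}) g(\sqrt{(\tau-r)^2-m^2})$.

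The remaining work is to pin down the limits of integration, and this is where the explicit $a_\pm$ enter; I expect this bookkeeping to be the main obstacle. The constraints are: (a) $\rho \ge 0$ and the first radicand $r \ge m$; (b) $-1 \le \omega \le 1$, equivalently $\big||\xi| - \rho\big| \le |\xi - \eta| \le |\xi| + \rho$; (c) the second radicand nonnegative, $\tau - \angles{\rho}_m \ge m$, i.e. $r \le \tau - m$. Translating (b) into a condition on $r$ after imposing the $\delta$-constraint $\angles{\rho}_m + \angles{|\xi-\eta|}_m = \tau$ is the crux: squaring the triangle-type inequalities and using $\angles{\cdot}_m^2 = m^2 + (\cdot)^2$ gives a quadratic in $r$ whose roots are precisely
\[
a_\pm = \frac{\tau}{2} \pm \frac{|\xi|}{2}\sqrt{\frac{\tau^2 - |\xi|^2 - 4m^2}{\tau^2 - |\xi|^2}}.
\]
One checks $a_- \le a_+$, that $a_\pm$ automatically satisfy $r \ge m$ and $r \le \tau - m$ on the relevant region (so (a) and (c) are not binding once (b) holds), and that the whole range is nonempty exactly when $\tau \ge \sqrt{|\xi|^2 + 4m^2}$, which is the physical condition for two mass-$m$ hyperboloid sheets (both future-pointing) to sum to $(\tau,\xi)$. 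That yields part \eqref{lmI+-i}.

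For part \eqref{lmI+-ii}, the computation is identical through the polar-coordinate reduction, except the $\delta$-factor is now $\delta\big(\tau - \angles{\rho}_m + \angles{|\xi-\eta|}_m\big)$, so it collapses at $\angles{|\xi-\eta|}_m = \angles{\rho}_m - \tau$, forcing $r - \tau \ge m$ in place of $\tau - r \ge m$; the sign flip turns $(\tau - r)$ into $(r - \tau)$ in the integrand and in the radicand. Redoing the triangle-inequality analysis with the subtracted hyperboloids, one finds the admissible $r$-region is the half-line $r \ge a_+$ with the \emph{same} $a_+$ as before (the lower root $a_-$ drops out because the opposite-sheet configuration has no upper obstruction — the two sheets can separate arbitrarily), giving \eqref{I-}. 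Throughout, the implicit constants are universal (coming from the $2\pi$ and the Jacobians), and the case $m = 0$ recovers Foschi--Klainerman since then $\angles{\rho}_0 = \rho$ and $a_\pm \to \frac{\tau \pm |\xi|}{2}$ (resp. $a_+ \to \frac{\tau+|\xi|}{2}$), so the lemma is indeed the claimed generalization.
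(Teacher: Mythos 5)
Your proposal is correct and takes essentially the same route as the paper: pass to polar coordinates in $\eta$, collapse the delta against the angular variable to produce the $1/|\xi|$ factor, substitute $r=\angles{\rho}_m$, and identify $[a_-,a_+]$ (resp.\ $[a_+,\infty)$) as the support by squaring the triangle-inequality constraints. The only cosmetic difference is that you change variables $\omega\mapsto s=\angles{|\xi-\eta|}_m$ and compute the Jacobian directly, whereas the paper first multiplies the argument of the delta by $(\tau-\angles{\eta}_m)+\angles{\xi-\eta}_m$ to linearize it in $\omega$; and your justification that $a_-$ is discarded in part (ii) (``the two sheets can separate arbitrarily'') is heuristic where the paper verifies $a_-<m\le a_+$ explicitly, but the claim itself is correct.
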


Lemma \ref{lmI+-} is used to prove the key bilinear estimates in Lemma \ref{lmBiest} below,  
 which also generalizes the result of Foschi–Klainerman for $m = 0$ 
\cite[Lemma 12.1]{FK00} to $m\ge 0$. 
 The proof is given in the Section \ref{seclmBiest}.
  
\begin{lemma}[Bilinear estimates for free waves]\label{lmBiest}
Let $m\ge 0$ and $\mu, \lambda_1, \lambda_2 \ge 1$.  Then for all $f_{ \lambda_1}, \ g_{\lambda_2 } \ \in L^2_x$ we have the following:
\begin{enumerate}[(i)]
\item (++) interaction: \label{Biest++}
\begin{align*}
\norm{   P_\mu \left ( S_m(t) f_{ \lambda_1}  \cdot S_m(t) g_{\lambda_2 } \right)  }
\lesssim
\begin{cases}
& (\lambda_1 \wedge \lambda_2) \norm{f_{ \lambda_1}} \norm{g_{ \lambda_2}}\quad  \text{if} \  \ \lambda_1\nsim \lambda_2,
\\
& \mu \norm{f_{ \lambda_1}} \norm{g_{ \lambda_2}}\quad  \text{if} \ \  \mu \lesssim \lambda_1 \sim\lambda_2.
\end{cases}
\end{align*}

\item (+-) interaction:  \label{Biest+-}
\begin{align*}
\norm{   P_\mu \left ( S_m(t) f_{ \lambda_1}  \cdot S_m(-t) g_{\lambda_2 } \right)  }
\lesssim
\begin{cases}
& (\lambda_1 \wedge \lambda_2) \norm{f_{ \lambda_1}} \norm{g_{ \lambda_2}} \quad  \text{if} \ \ \lambda_1\nsim \lambda_2,
\\
& \mu^\frac12 \lambda_1^\frac12  \norm{f_{ \lambda_1}} \norm{g_{ \lambda_2}} \quad  \text{if} \ \   \mu \lesssim \lambda_1 \sim\lambda_2.
\end{cases}
\end{align*}

\end{enumerate}

\end{lemma}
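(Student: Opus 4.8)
\textbf{Proof proposal for Lemma \ref{lmBiest}.}

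The plan is to reduce both bilinear estimates to the convolution estimates in Lemma \ref{lmI+-} via Plancherel. Writing $u = S_m(t) f_{\lambda_1}$ and $v = S_m(\pm t) g_{\lambda_2}$, the space-time Fourier transform of the product $P_\mu(uv)$ is supported on $\{|\xi| \sim \mu\}$ and is given, up to constants, by $\rho_\mu(|\xi|)$ times a convolution of measures concentrated on the characteristic sets $\tau = \angles{\eta}_m$ and $\tau = \pm\angles{\xi-\eta}_m$ respectively; hence $\|P_\mu(uv)\|_{L^2_{t,x}}^2 \simeq \int \rho_\mu(|\xi|)^2 \bigl| I_\pm(\abs{\widehat{f_{\lambda_1}}}, \abs{\widehat{g_{\lambda_2}}})(\tau,\xi)\bigr|^2 \,d\tau\,d\xi$ after dominating $\widehat{f}, \widehat{g}$ by their absolute values (the frequency cutoffs $\rho_{\lambda_1}, \rho_{\lambda_2}$ restrict the $\eta$-integration). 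So it suffices to bound $\|\rho_\mu(|\xi|) I_\pm(f,g)\|_{L^2_{\tau,\xi}}$ by the stated dyadic factor times $\|f\|_{L^2}\|g\|_{L^2}$, where now $f, g$ are radial $L^2$ profiles supported in the annuli $\{|\eta| \sim \lambda_1\}$, $\{|\eta| \sim \lambda_2\}$.

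For the $(++)$ case I would use the explicit formula \eqref{I+}: the inner $r$-integral runs over the bounded interval $[a_-, a_+]$ whose length is $|a_+ - a_-| = |\xi|\sqrt{(\tau^2-|\xi|^2-4m^2)/(\tau^2-|\xi|^2)} \lesssim |\xi| \sim \mu$. Applying Cauchy–Schwarz in $r$ produces a factor $|a_+-a_-|^{1/2}$ and reduces matters to an $L^2$ bound on $\frac1{|\xi|}|a_+-a_-|^{1/2}\bigl(\int r(\tau-r) f(\cdots)^2 g(\cdots)^2 dr\bigr)^{1/2}$; then I change variables in the $(\tau,\xi)$ integral to $(\rho, \sigma) = (\sqrt{r^2-m^2}, \sqrt{(\tau-r)^2 - m^2})$ — i.e.\ the two individual frequencies — whose Jacobian is computed from $r = \angles{\rho}_m$, $\tau - r = \angles{\sigma}_m$, and carry out the $\xi$-angular integration, which is where the support condition $|\xi|\sim\mu$ and the triangle-inequality constraints among $\mu, \lambda_1, \lambda_2$ enter. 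Balancing the resulting powers against the two cases $\lambda_1 \nsim \lambda_2$ (where the output frequency $\mu$ is comparable to $\lambda_1\vee\lambda_2$ and the gain is $\lambda_1\wedge\lambda_2$) and $\mu \lesssim \lambda_1\sim\lambda_2$ (where $\mu$ is the small parameter) yields the claimed bounds. For the $(+-)$ case one proceeds identically starting from \eqref{I-}, the difference being that the $r$-integral is now over the \emph{unbounded} interval $[a_+,\infty)$, so Cauchy–Schwarz in $r$ is no longer available directly; instead I would first localize $r$ using the support of $g(\sqrt{(r-\tau)^2-m^2})$, i.e.\ $r - \tau = \angles{\sigma}_m$ with $\sigma \sim \lambda_2$, which confines $r$ to an interval of length $\lesssim \lambda_2$, and similarly the factor $f(\sqrt{r^2-m^2})$ forces $r = \angles\rho_m$ with $\rho\sim\lambda_1$; after this the same change of variables and angular integration go through, producing the $\mu^{1/2}\lambda_1^{1/2}$ loss in the resonant case rather than $\mu$ (this is the familiar fact that $(+-)$ interactions of two waves of comparable frequency are worse than $(++)$ because the characteristic cones are nearly tangent along a large set).

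The main obstacle I anticipate is the careful bookkeeping in the change of variables from $(\tau,\xi)$ to the pair of input frequencies together with the angular variable of $\xi$: one must verify that the map $(\tau, \xi) \mapsto (\rho, \sigma, \omega)$ is a diffeomorphism onto its image with a Jacobian that can be bounded below (the degeneracy loci — where $\tau^2 = |\xi|^2 + 4m^2$, i.e.\ $a_+ = a_-$ — must be shown to contribute a set of measure zero or to be integrable), and that the region of integration, once cut by $|\xi|\sim\mu$, $\rho\sim\lambda_1$, $\sigma\sim\lambda_2$, genuinely produces the min/max combinations asserted. The $m > 0$ terms only perturb the $m=0$ Foschi–Klainerman computation by lower-order contributions since $\angles{\xi}_m \sim |\xi|$ for $|\xi| \gtrsim 1$ (and all our frequencies are $\gtrsim 1$), so no new phenomenon arises there; the estimates are in fact uniform in $m\ge 0$. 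Once $\|\rho_\mu I_\pm\|_{L^2} \lesssim (\text{dyadic factor})\|f\|\|g\|$ is established, Lemma \ref{lmBiest} follows immediately by the Plancherel reduction of the first paragraph.
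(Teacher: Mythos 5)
Your proposal takes a genuinely different route from the paper, and unfortunately it has a gap at its foundation. You propose to pass directly to an $L^2_{\tau,\xi}$ bound on $\rho_\mu(|\xi|)\,I_\pm\bigl(|\widehat{f_{\lambda_1}}|,|\widehat{g_{\lambda_2}}|\bigr)$ using the explicit formulas of Lemma~\ref{lmI+-}. But those formulas are only valid for \emph{radial} input profiles: $I_\pm(f,g)(\tau,\xi)$ is defined with $f(|\eta|)$ and $g(|\xi-\eta|)$, i.e.\ functions of the radius alone, and this is essential to the polar-coordinate/change-of-variables derivation in Section~\ref{seclmI+-}. The Fourier transforms $\widehat{f_{\lambda_1}},\widehat{g_{\lambda_2}}$ are \emph{not} radial in general, and the reduction to radial profiles that you assert (``where now $f,g$ are radial $L^2$ profiles'') is not justified — one cannot dominate $|\widehat f(\eta)|$ by a radial function with comparable $L^2$ norm, and the angular dependence cannot simply be averaged away because the delta constraint ties the angle of $\eta$ to $(\tau,\xi)$.

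The paper circumvents this issue by applying Cauchy--Schwarz \emph{in the $\eta$-convolution integral before} invoking Lemma~\ref{lmI+-}. This pairs off the (genuinely radial) cutoffs $\rho_{\lambda_1}(|\eta|)\rho_{\lambda_2}(|\xi-\eta|)$, so the formulas of Lemma~\ref{lmI+-} are applied only to radial indicator profiles, yielding a pointwise-in-$(\tau,\xi)$ bound $\sup I_\pm\lesssim\lambda_1^2$; the remaining factor $\int|\widehat f|^2|\widehat g|^2\,\delta(\cdots)\,d\eta$ integrates exactly in $\tau,\xi$ to $\|f\|^2\|g\|^2$. This handles the case $\lambda_1\lesssim\lambda_2\sim\mu$ cleanly. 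For the resonant case $\mu\ll\lambda_1\sim\lambda_2$ this $\sup$ bound is too weak (it only gives $\lambda_1$, not $\mu$ or $(\mu\lambda_1)^{1/2}$), so the paper further decomposes frequency space into $\mu$-cubes as in Foschi–Klainerman, proves a localized bilinear estimate (Lemma~\ref{lm-locbiest}) via a surface-measure argument on the ellipsoid $\mathcal E(\tau,\xi)$, and sums by almost-orthogonality; your proposal does not address this step at all. Your Cauchy--Schwarz-in-$r$-then-change-variables idea does produce plausible numerology in the resonant case if one pretends the inputs are radial, but that hypothesis is false, and the nonresonant case also requires careful tracking of the $\xi$-support constraints (the $\xi$-integration is over a thin shell of thickness $\lesssim\lambda_1$, not the full annulus) which you flag but do not carry out. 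As it stands, the proposal does not constitute a proof.
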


%%%%%%%%%%%%%%%%%%%%%%%%%%%%%%%%%%%%%%%%%
\section{Null structure, Null-form and bilinear estimates} \label{secnull}
\subsection{Null structure } 
Here we reveal the null structure in the bilinear terms $\innerprod{\beta\psi^+}{\psi ^\pm}$.
 Taking Fourier Transform in space and then using the identities \eqref{psi-split}--\eqref{Pbetaidentities} we obtain
\begin{equation}\label{PFT}
\begin{split}
\mathcal F_{x} \innerprod{\beta \Pi^+_m(D)\psi^+ }{\Pi^\pm_m(D) \psi^\pm}( \xi) 
 &=\iint_{\xi=\eta-\zeta}\innerprod{\beta \Pi^+_m(\eta) \widehat {\psi^+} ( \eta) }{\Pi^\pm_m (\zeta)  \widehat {\psi^\pm} (\zeta)} \, d\eta  d\zeta 
   \\
   &=\iint_{\xi=\eta-\zeta} \innerprod{ \beta \widehat { \psi^+}  (\eta) }{ \Pi^-_m(\eta)\Pi^\pm_m(\zeta)   \widehat {\psi^\pm} (\zeta)} \, d\eta  d\zeta 
   \\
   & \qquad \qquad +m \iint_{\xi=\eta-\zeta} \angles{\eta}_m^{-1}
\innerprod{ \widehat { \psi^+}  ( \eta) }{  \widehat {\psi^\pm} (\zeta)}\, d\eta  d\zeta .
\end{split}
\end{equation}

We use the notation 
$\hat{\xi}=\angles{\xi}_m^{-1}\xi$. Now
we compute 
\footnote{In view of \eqref{matrixidentities2} we have 
\begin{align*}
 I\mp (\boldsymbol  \alpha \cdot\hat{\eta} )(\boldsymbol  \alpha \cdot \hat{\zeta} ) &= I\mp \boldsymbol  \alpha^j  \boldsymbol  \alpha^k \hat{\eta}_j \hat{\zeta}_k 
 = I\mp \delta^{jk} \hat{\eta}_j \hat{\zeta}_k I + i \hat{\eta}_j \hat{\zeta}_k \epsilon^{jkl} S_{l}
\\
&= (1 \mp \hat{\eta}\cdot\hat{\zeta}) I \mp i (\hat{\eta} \times \hat{\zeta} )\cdot S
\end{align*}
},
\footnote{If $m=0$ then $\hat{\eta}=|\eta|^{-1}\eta$, $\hat{\zeta}=|\zeta|^{-1}\zeta$  and $r^\pm(\eta, \zeta)=0$. Hence 
$$4 \Pi^-_0(\eta)\Pi^\pm_0 (\zeta) = \left(1 \mp (|\eta||\zeta|)^{-1} \eta\cdot\zeta \right) I \mp i (|\eta||\zeta|)^{-1} (\eta \times \zeta )\cdot S - \left( |\eta|^{-1}\eta\mp |\zeta|^{-1} \zeta\right) \cdot  \boldsymbol  \alpha$$
which coincides with the null structure found in \cite[See Lemma 2 therein]{DFS06}}.
\begin{align*}
 4 \Pi^-_m(\eta)\Pi^\pm_m (\zeta)&= \left( I -\frac{1}{\angles{\eta}_m}[ \boldsymbol \alpha \cdot \eta  + m\beta] \right) 
 \left( I \pm \frac{1}{\angles{\zeta}_m}[ \boldsymbol \alpha \cdot\zeta + m \beta]\right)
 \\
 &= I\mp (\boldsymbol  \alpha \cdot\hat{\eta} )(\boldsymbol  \alpha \cdot \hat{\zeta} ) - (\hat{\eta}\mp \hat{\zeta}) \cdot \boldsymbol  \alpha + r^\pm(\eta, \zeta)
  \\
 &= (1 \mp \hat{\eta}\cdot\hat{\zeta}) I \mp i (\hat{\eta} \times \hat{\zeta} )\cdot S - ( \hat{\eta}\mp \hat{\zeta}) \cdot  \boldsymbol  \alpha + r^\pm  (\eta, \zeta),
\end{align*}
where
\begin{align*}
r^\pm(\eta, \zeta)&= \mp 
  \frac{ [(\eta-\zeta)\cdot  \boldsymbol  \alpha] m\beta -(\angles{\eta}\mp\angles{\zeta} )m\beta  + m^2 I}{\angles{\eta}_m\angles{\zeta}_m}.
\end{align*}

We can write
\begin{align*}
(1 \mp \hat{\eta}\cdot\hat{\zeta}) I &= q^\pm_1(\eta, \zeta)+b^\pm_1(\eta, \zeta),
\\
- (\hat{\eta} \mp\hat{\zeta})\cdot \boldsymbol  \alpha &= q^\pm_2(\eta, \zeta)+b^\pm_2(\eta, \zeta),
\end{align*}
where
\begin{equation}\label{qb12}
\left\{
\begin{aligned}
q^\pm_1(\eta, \zeta)&=( |\hat{\eta}| | \hat{\zeta}|\mp \hat{\eta}\cdot\hat{\zeta})I,  \quad
q^\pm_2(\eta, \zeta)= -(\hat \eta| \hat \zeta| \mp \hat \zeta|\hat \eta| )\cdot \boldsymbol \alpha,  
\\
b^\pm_1(\eta, \zeta)&= (1 -|\hat{\eta}| | \hat{\zeta}| ) I,   \quad b^\pm_2(\eta, \zeta)=- \frac{ [\eta( \angles{\zeta}_m- |\zeta|)\mp \zeta( \angles{\eta}_m- |\eta|)]\cdot \boldsymbol  \alpha}{\angles{\eta}_m\angles{\zeta}_m}.
\end{aligned}
\right.
\end{equation}
Setting
\begin{equation}\label{qb3}
q^\pm_3(\eta, \zeta):= \mp i (\hat{\eta} \times \hat{\zeta} )\cdot S, \quad 
b^\pm_3(\eta, \zeta):=r^\pm(\eta, \zeta)
\end{equation}
we can write
\begin{equation}\label{Psymb}
 4 \Pi^-_m(\eta)\Pi^\pm_m(\zeta)=\sum_{j=1}^3 [q^\pm_j(\eta, \zeta) +b^\pm_j(\eta, \zeta)],
\end{equation}
where $q^\pm_j$ and $b^\pm_j$ for $j=1,2,3$ are given above.

Then in view of \eqref{PFT}--\eqref{Psymb} we can write
\begin{equation}\label{pp}
 \innerprod{\beta \Pi^+_m(D)\psi^+ }{\Pi^\pm_m(D) \psi^\pm}
   =\sum_{j=1}^3  Q_j(\psi^+, \psi^\pm)+ \sum_{j=1}^4  B_j(\psi^+, \psi^\pm),
\end{equation}
where
\begin{equation}\label{qb}
\left\{
\begin{aligned}
 Q_j(\psi^+, \psi^\pm)&= \mathcal F_x^{-1}\iint_{\xi=\eta-\zeta} \innerprod{   \beta \widehat \psi^+  (\eta) }{ q_j^\pm(\eta, \zeta)\widehat \psi^\pm (\zeta)} \, d\eta  d\zeta ,
   \\
  B_j(\psi^+, \psi^\pm)&=\mathcal F_x^{-1} \iint_{\xi=\eta-\zeta}
    \innerprod{  \beta \widehat \psi^+  (\eta) }{ b_j^\pm(\eta, \zeta)\widehat \psi^\pm (\zeta)}\, d\eta  d\zeta 
\end{aligned}
\right.
\end{equation}
with 
\begin{equation}\label{b4}
 b_4^\pm(\eta, \zeta)= m\angles{\eta}_m^{-1} \beta. 
\end{equation}

As we show below the $Q_j$'s are all null
forms. The $B_j$'s on the other hand, are not. Nevertheless, these 
terms are more regular than the $Q_j$'s with one full derivative, see the estimates in Lemma \ref{lm-biest-strz} below. Thus, we have decomposed $ \innerprod{\beta \Pi^+_m(D)\psi^+ }{\Pi^\pm_m(D) \psi^\pm}$ into sum of bilinear null forms and generic bilinear terms that are smoother. It is worth to note that if $m=0$ then all the $B_j$'s are zero (see the footnote above),and hence we only have the null forms on the r.h.s of \eqref{pp}.

The null symbols $q^\pm_j$ satisfy the following estimates.
\begin{lemma}\label{lm-qest}
Let $a\in \left[0, \frac12 \right]$. Then for $j=1, 2,3$ we have 
\begin{equation}\label{q-est}
\left\{
\begin{aligned}
|q^+_j(\eta, \zeta)| &\lesssim \left[ \frac{    |\eta-\zeta|(  |\eta-\zeta|- ||\eta|-|\zeta||)  }{ \angles{\eta}_m \angles{\zeta}_m }\right]^{a},
\\
|q^-_j(\eta, \zeta)| &\lesssim \left[ \frac{   ( |\eta| +|\zeta|)(  |\eta| + |\zeta|- |\eta-\zeta|)  }{ \angles{\eta}_m \angles{\zeta}_m }\right]^{a}.
\end{aligned}
\right.
\end{equation}

\end{lemma}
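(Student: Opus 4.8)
The plan is to estimate each null symbol $q_j^\pm$ directly from its explicit formula in \eqref{qb12}--\eqref{qb3}, reducing everything to elementary angle and magnitude comparisons. The key observation is that each $q_j^\pm$ is bounded by $1$ in absolute value, so it suffices to prove the estimates for the exponent $a = \tfrac12$ and then interpolate trivially: for $a \in [0,\tfrac12]$ we write $|q_j^\pm| = |q_j^\pm|^{2a}\cdot|q_j^\pm|^{1-2a} \le |q_j^\pm|^{2a}$, and $2a \in [0,1]$, so once we control $|q_j^\pm| \lesssim \Theta^{1/2}$ with $\Theta$ the bracketed quantity, we get $|q_j^\pm| \lesssim \Theta^{a}$ for all $a \le \tfrac12$ (using $\Theta \lesssim 1$, which itself needs checking but follows from the same computations). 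Actually it is cleaner to first prove $|q_j^\pm|^2 \lesssim \Theta$ and then raise to the power $a$.

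First I would handle $q_1^\pm$. Writing $\hat\eta = \angles{\eta}_m^{-1}\eta$ etc., we have $q_1^\pm = (|\hat\eta||\hat\zeta| \mp \hat\eta\cdot\hat\zeta)I$. Let $\theta = \vangle(\eta,\zeta)$ be the angle between $\eta$ and $\zeta$. Then $|\hat\eta||\hat\zeta| \mp \hat\eta\cdot\hat\zeta = |\hat\eta||\hat\zeta|(1 \mp \cos\theta)$, and $1-\cos\theta \simeq \theta^2$, $1+\cos\theta \simeq (\pi-\theta)^2$. The standard null-form identities relate these angles to the quantities appearing on the right: for the $(+)$ case, $|\eta|\,|\zeta|\,\theta^2 \simeq |\eta||\zeta| - \eta\cdot\zeta$ which is comparable to $|\eta-\zeta|^2 - (|\eta|-|\zeta|)^2 = (|\eta-\zeta| - ||\eta|-|\zeta||)(|\eta-\zeta|+||\eta|-|\zeta||) \simeq |\eta-\zeta|(|\eta-\zeta| - ||\eta|-|\zeta||)$ when $|\eta-\zeta| \gtrsim ||\eta|-|\zeta||$, with the remaining regime handled directly. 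Since $|\hat\eta| \le |\eta|/\angles{\eta}_m$ we get $|q_1^+| \lesssim \frac{|\eta|\,|\zeta|\,\theta^2}{\angles{\eta}_m\angles{\zeta}_m} \lesssim \frac{|\eta-\zeta|(|\eta-\zeta| - ||\eta|-|\zeta||)}{\angles{\eta}_m\angles{\zeta}_m}$, which is even stronger than the claimed square-root bound; the square root is then free. The $(-)$ case is the mirror image using $(|\eta|+|\zeta|)^2 - |\eta-\zeta|^2 \simeq |\eta||\zeta|(\pi-\theta)^2$.

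For $q_3^\pm = \mp i(\hat\eta\times\hat\zeta)\cdot S$, note $|\hat\eta\times\hat\zeta| = |\hat\eta||\hat\zeta||\sin\theta|$ and $|\sin\theta| \simeq \theta$ for the $(+)$ angle regime (or $\pi-\theta$ for $(-)$), so $|q_3^\pm| \lesssim |\hat\eta||\hat\zeta|\,|\sin\theta| \lesssim \big(\frac{|\eta||\zeta|\theta^2}{\angles{\eta}_m\angles{\zeta}_m}\big)^{1/2}$ — here the square root is genuinely needed, which is exactly why the exponent is capped at $\tfrac12$. For $q_2^\pm = -(\hat\eta|\hat\zeta| \mp \hat\zeta|\hat\eta|)\cdot\boldsymbol\alpha$, since each $\alpha^j$ has operator norm $1$, we estimate $|\hat\eta|\hat\zeta| \mp \hat\zeta|\hat\eta||$; squaring gives $|\hat\eta|^2|\hat\zeta|^2 \mp 2|\hat\eta||\hat\zeta|\hat\eta\cdot\hat\zeta + |\hat\zeta|^2|\hat\eta|^2 = 2|\hat\eta|^2|\hat\zeta|^2(1 \mp \cos\theta) \simeq |\hat\eta|^2|\hat\zeta|^2\theta^2$ (resp. $(\pi-\theta)^2$), so again $|q_2^\pm| \lesssim \big(\frac{|\eta||\zeta|\theta^2}{\angles{\eta}_m\angles{\zeta}_m}\big)^{1/2}$, matching the claim. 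Finally I would raise each bound to the power $2a \le 1$ (legitimate since each bracketed quantity and each $|q_j^\pm|$ is $\lesssim 1$, as $|\hat\eta|,|\hat\zeta|\le 1$ and $\theta \le \pi$), giving \eqref{q-est}.

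The main obstacle is the bookkeeping in the regime $|\eta-\zeta| \lesssim ||\eta|-|\zeta||$ (equivalently $\eta,\zeta$ nearly parallel and of very different length for the $(+)$ case, or nearly antiparallel for the $(-)$ case): there the comparison $|\eta-\zeta|^2 - (|\eta|-|\zeta|)^2 \simeq |\eta-\zeta|(|\eta-\zeta| - ||\eta|-|\zeta||)$ degenerates, and one must argue separately that $q_j^\pm$ itself is small there — this is where the geometry $1-\cos\theta \simeq \theta^2$ must be combined carefully with the relation between $\theta$ and $|\eta-\zeta|/(|\eta|\wedge|\zeta|)$. This is routine but is the only place requiring care; everything else is a direct substitution into the explicit symbols.
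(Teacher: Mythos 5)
Your proof is correct and follows essentially the same two-step structure as the paper's: first establish the trivial bound $|q_j^\pm| \lesssim 1$ and the sharp bounds at the endpoint exponent, then observe that raising to the power $2a\in[0,1]$ gives the whole family. The one genuine difference is that the paper simply cites \cite[Lemma 13.2]{FK00} for the intermediate estimates (its equation \eqref{qj}), whereas you re-derive them from the explicit symbols in \eqref{qb12}--\eqref{qb3} via the elementary identities $1\mp\cos\theta\sim\theta^2$ (resp.\ $(\pi-\theta)^2$), $|\sin\theta|=\sqrt{(1-\cos\theta)(1+\cos\theta)}$, and $|\eta||\zeta|(1-\cos\theta)=\tfrac12\bigl(|\eta-\zeta|^2-(|\eta|-|\zeta|)^2\bigr)$; this makes the proof self-contained at the cost of a little extra computation. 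One small correction to your closing remark: the regime $|\eta-\zeta|\lesssim\bigl||\eta|-|\zeta|\bigr|$ that you flag as needing separate care is not actually a degenerate case at all. The reverse triangle inequality gives $\bigl||\eta|-|\zeta|\bigr|\le|\eta-\zeta|$ unconditionally, so $|\eta-\zeta|+\bigl||\eta|-|\zeta|\bigr|\sim|\eta-\zeta|$ always, and the factorization $|\eta-\zeta|^2-(|\eta|-|\zeta|)^2=\bigl(|\eta-\zeta|-\bigl||\eta|-|\zeta|\bigr|\bigr)\bigl(|\eta-\zeta|+\bigl||\eta|-|\zeta|\bigr|\bigr)$ is comparable to $|\eta-\zeta|\bigl(|\eta-\zeta|-\bigl||\eta|-|\zeta|\bigr|\bigr)$ uniformly; there is no separate regime to treat.
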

\begin{proof}
First note that
\begin{equation}
\label{qle1}
|q_j^\pm(\eta, \zeta)| \lesssim 1.
\end{equation}
Moreover, for two vectors $\eta$ and $\zeta$ we have the following estimates
 (see e.g. \cite[Lemma 13.2]{FK00}):
 \begin{equation}\label{qj}
\left\{
\begin{aligned}
|q^+_1(\eta, \zeta)|  & \sim \frac{ |\eta -\zeta|  \left( |\eta -\zeta| -| |\eta|-|\zeta| | \right)}{\angles{\eta}_m\angles{\zeta}_m},
\\
|q^-_1(\eta, \zeta)| & \sim \frac{ (  |\eta| + |\zeta|) \left(   |\eta| + |\zeta|- |\eta-\zeta| \right)}{\angles{\eta}_m\angles{\zeta}_m} ,
\\
|q^+_{j}(\eta, \zeta)| &\lesssim  \left[\frac{  |\eta-\zeta| \left(  |\eta-\zeta| -| |\eta|-|\zeta| |  \right)}{ \angles{\eta}_m\angles{\zeta}_m} \right]^\frac12,
\\
|q^-_{j}(\eta,\zeta)| & \lesssim \left[ \frac{ ( |\eta|+|\zeta|) \left(   |\eta| + |\zeta|- |\eta-\zeta| \right)}{\angles{\eta}_m\angles{\zeta}_m}\right]^\frac12,
\end{aligned}
\right.
\end{equation}
where $j=2, 3$.
 Now interpolation between \eqref{qle1} and \eqref{qj} gives the desired estimates in 
\eqref{q-est}.
\end{proof}

\subsection{Null-form estimates}
We now prove bilinear null-form estimates for two free solutions 
$S_m(t) f$ and $S_m(\pm t)  g$ of the Dirac equation.

\begin{lemma}[Null-form estimates for free waves]\label{lmNullest}
Let $m\ge 0$ and $\mu, \lambda_1, \lambda_2 \ge 1$. Then
we have the following null-form estimates:
\begin{enumerate}[(i)]
\item (++) interaction: \label{Nullest++}
\begin{align*}
\norm{ P_\mu Q_j( S_m(t)f_{\lambda_1},  S_m(t)g_{ \lambda_2})}
\lesssim
\begin{cases}
& (\lambda_1 \wedge \lambda_2) \norm{f_{ \lambda_1}} \norm{g_{ \lambda_2}}\quad  \text{if} \ \lambda_1\nsim \lambda_2,
\\
& \mu (\mu /\lambda_1 )^{\frac12}   \norm{f_{ \lambda_1}} \norm{g_{ \lambda_2}}\quad  \text{if} \ \  \mu \lesssim \lambda_1 \sim\lambda_2.
\end{cases}
\end{align*}

\item (+-) interaction:  \label{Nullest+-}
\begin{align*}
\norm{ P_\mu Q_j( S_m(t)f_{\lambda_1},  S_m(-t)g_{ \lambda_2})}
\lesssim
\begin{cases}
& (\lambda_1 \wedge \lambda_2) \norm{f_{ \lambda_1}} \norm{g_{ \lambda_2}} \quad  \text{if} \ \ \lambda_1\nsim \lambda_2,
\\
& \mu  \norm{f_{ \lambda_1}} \norm{g_{ \lambda_2}} \quad  \text{if} \ \   \mu \lesssim \lambda_1 \sim\lambda_2.
\end{cases}
\end{align*}

\end{enumerate}

\end{lemma}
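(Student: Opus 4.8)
The plan is to combine the pointwise null-symbol bounds from Lemma~\ref{lm-qest} with the bilinear convolution estimates for free waves in Lemma~\ref{lmBiest}. Writing the product $Q_j(S_m(t)f_{\lambda_1}, S_m(\pm t)g_{\lambda_2})$ on the Fourier side, it has the form
\[
\mathcal F_{t,x}^{-1}\!\!\iint_{\substack{\xi=\eta-\zeta\\ \tau=\angles{\eta}_m\pm\angles{\zeta}_m}}\!\! q_j^{\pm}(\eta,\zeta)\,\innerprod{\beta \widehat{f_{\lambda_1}}(\eta)}{\widehat{g_{\lambda_2}}(\zeta)}\,\rho_\mu(|\xi|)\,d\eta\,d\zeta,
\]
so after taking absolute values and using $|\innerprod{\beta a}{b}|\le|a||b|$ it is dominated pointwise by the convolution of $|q_j^\pm|\,|\widehat{f_{\lambda_1}}|$ with $|\widehat{g_{\lambda_2}}|$ restricted to the relevant resonance surface. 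The weight appearing in Lemma~\ref{lm-qest} (with $a=\tfrac12$) is, for the $(+)$ interaction, $\big(|\eta-\zeta|(|\eta-\zeta|-||\eta|-|\zeta||)/(\angles{\eta}_m\angles{\zeta}_m)\big)^{1/2}$, and similarly with $|\eta|+|\zeta|-|\eta-\zeta|$ for the $(-)$ interaction.

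The key step is to recognize that the quantity $|\eta-\zeta|-||\eta|-|\zeta||$ (resp.\ $|\eta|+|\zeta|-|\eta-\zeta|$) is exactly the kind of ``angle'' factor that improves the bilinear estimate in Lemma~\ref{lmBiest}. Concretely, I would first handle the \emph{resonant} regime $\mu\lesssim\lambda_1\sim\lambda_2$, where both Lemma~\ref{lmBiest} bounds degrade. On the support of $P_\mu(\cdots)$ one has $|\xi|=|\eta-\zeta|\sim\mu$ and $\angles{\eta}_m\angles{\zeta}_m\sim\lambda_1^2$; for the $(++)$ case the elementary geometric identity gives $|\eta-\zeta|-||\eta|-|\zeta||\lesssim\mu$, so the null weight contributes a factor $\lesssim(\mu\cdot\mu/\lambda_1^2)^{1/2}=\mu/\lambda_1$. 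Inserting this into the resonant branch $\norm{P_\mu(S_m(t)f_{\lambda_1}S_m(t)g_{\lambda_2})}\lesssim\mu\norm{f_{\lambda_1}}\norm{g_{\lambda_2}}$ from Lemma~\ref{lmBiest}\eqref{Biest++} yields the claimed $\mu(\mu/\lambda_1)^{1/2}\norm{f_{\lambda_1}}\norm{g_{\lambda_2}}$ after we only use \emph{half} of the gained power and accept the loss, i.e.\ $\mu\cdot(\mu/\lambda_1)^{1/2}$. For the $(+-)$ case the analogous geometry gives $|\eta|+|\zeta|-|\eta-\zeta|\lesssim\mu^2/\lambda_1$ on the $P_\mu$--support when $\lambda_1\sim\lambda_2$, so the null weight is $\lesssim(\mu\cdot\mu^2/\lambda_1\cdot\lambda_1^{-2})^{1/2}\cdot\lambda_1^{?}$; combined with the resonant bound $\mu^{1/2}\lambda_1^{1/2}\norm{f}\norm{g}$ from Lemma~\ref{lmBiest}\eqref{Biest+-} this should collapse to $\mu\norm{f_{\lambda_1}}\norm{g_{\lambda_2}}$.

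The \emph{non-resonant} regime $\lambda_1\nsim\lambda_2$ is easier: there the null weight is harmless, $|q_j^\pm|\lesssim1$ suffices, and one simply invokes the corresponding branch of Lemma~\ref{lmBiest} giving $(\lambda_1\wedge\lambda_2)\norm{f_{\lambda_1}}\norm{g_{\lambda_2}}$ directly. To pass rigorously from a pointwise Fourier-domain bound on the product to an $L^2_{t,x}$ bound I would carry the null weight through by a bootstrapping/duality argument: test against an $L^2_{t,x}$ function $h$, decompose $h$ into modulation pieces if needed, and use that the convolution estimate in Lemma~\ref{lmBiest} is proved precisely via Lemma~\ref{lmI+-}, which already produces the one-dimensional $r$--integral in which the weight factor can be bounded explicitly. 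The cleanest route is probably to absorb the symbol bound into the definition of $f,g$: since $q_j^\pm$ depends on $\eta,\zeta$ and not just their moduli this needs a little care, but one can dominate $|q_j^\pm(\eta,\zeta)|$ by a radial function of $(|\eta|,|\zeta|,|\eta-\zeta|)$ using Lemma~\ref{lm-qest}, and then Lemma~\ref{lmI+-} applies.

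The main obstacle I anticipate is the bookkeeping of how much of the gained angular power one can actually extract while staying within the radial/modular framework of Lemmas~\ref{lmI+-}--\ref{lmBiest}: the null weight $|\eta-\zeta|-||\eta|-|\zeta||$ is comparable to the ``space-time frequency gap'' that drives the bilinear improvement, but converting it into a clean multiplicative gain of $(\mu/\lambda_1)^{1/2}$ (and not merely $(\mu/\lambda_1)^{1/2+}$ or worse) requires tracking the endpoint case in the interpolation of Lemma~\ref{lm-qest} with $a=\tfrac12$ and checking that the surface measure in Lemma~\ref{lmI+-} does not eat the gain. The $(+-)$ case is the more delicate of the two because the relevant geometric inequality $|\eta|+|\zeta|-|\eta-\zeta|\lesssim\mu^2/\lambda_1$ uses the near-collinearity forced by $|\eta|\sim|\zeta|\sim\lambda_1\gg\mu\sim|\eta-\zeta|$ in a slightly different way, and one must also account for the opposite time orientations when invoking the $I_-$ branch of Lemma~\ref{lmI+-}. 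Everything else is routine once these geometric inputs are pinned down.
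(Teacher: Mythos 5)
Your plan of combining the symbol bounds of Lemma~\ref{lm-qest} with the convolution estimates of Lemma~\ref{lmBiest} is the right high-level idea, but the way you pair them up is wrong, and it is wrong for a concrete structural reason: you have not accounted for the complex conjugation in $\innerprod{\cdot}{\cdot}$. When you take $\mathcal F_{t,x}$ of $Q_j(S_m(t)f_{\lambda_1}, S_m(\pm t)g_{\lambda_2})$, the conjugate in the pairing flips the sign of $\angles{\zeta}_m$ in the delta function, so the resulting convolution lives on $\{\tau+\angles{\eta}_m\mp\angles{\zeta}_m=0\}$, not $\{\tau+\angles{\eta}_m\pm\angles{\zeta}_m=0\}$. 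Concretely, the $(++)$ interaction of $Q_j$ gives a $(+-)$-type wave convolution and the $(+-)$ interaction gives a $(++)$-type wave convolution. You therefore need to pair the $q_j^+$ bound with Lemma~\ref{lmBiest}\eqref{Biest+-}, and the $q_j^-$ bound with Lemma~\ref{lmBiest}\eqref{Biest++} --- the opposite of what you wrote.

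Once you do this, the ``use only half of the gained power'' hand-wave disappears: for $(++)$ in the resonant regime the symbol factor is $\mu/\lambda_1$ and Lemma~\ref{lmBiest}\eqref{Biest+-} contributes $\mu^{1/2}\lambda_1^{1/2}$, giving exactly $\mu(\mu/\lambda_1)^{1/2}$, no slack and no bookkeeping of fractional gains. Your version instead combines $\mu/\lambda_1$ with the $\mu$ from Lemma~\ref{lmBiest}\eqref{Biest++}, which is simply not the bilinear estimate that the delta-function geometry produces, so that computation is not justified even though the number happens to be smaller. For the $(+-)$ interaction your proposed geometric bound $|\eta|+|\zeta|-|\eta-\zeta|\lesssim\mu^2/\lambda_1$ on the support is false: when $|\eta|\sim|\zeta|\sim\lambda_1\gg\mu\sim|\eta-\zeta|$ one has $|\eta|+|\zeta|-|\eta-\zeta|\sim\lambda_1$, because you are subtracting a small quantity from a large one. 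The correct route is to use the trivial bound $|q_j^-|\lesssim 1$ and let the favorable $(++)$ convolution estimate $\mu\norm{f_{\lambda_1}}\norm{g_{\lambda_2}}$ from Lemma~\ref{lmBiest}\eqref{Biest++} do all the work. Finally, the ``radial dominating function''/duality machinery you describe for carrying the weight is unnecessary: once you restrict to dyadic frequency shells, $|q_j^+(\eta,\zeta)|$ is bounded by a \emph{constant} depending only on $\mu,\lambda_1,\lambda_2$ (namely $(\mu(\mu\wedge\lambda_1\wedge\lambda_2)/(\lambda_1\lambda_2))^{1/2}$), so it can be pulled out of the integral pointwise and the remainder is precisely the Fourier transform of a product of free waves to which Lemma~\ref{lmBiest} applies by Plancherel.
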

\begin{proof}

By \eqref{qb} we have
\begin{align*}
&\Bigabs{ \mathcal F_{t,x} [Q_j( S_m(t)f_{\lambda_1},  S_m(\pm t)g_{ \lambda_2})](\tau, \xi)}
\\
& \qquad =\Bigabs{\iint_{\xi=\eta-\zeta}  \rho_\mu(|\eta-\zeta|) 
 \innerprod{   \beta \widehat { f_{\lambda_1}} (\eta) }{ q_j^\pm (\eta, \zeta)\widehat { g_{\lambda_2}}  (\zeta)}  \delta(\tau+\angles{\eta}_m \mp \angles{\zeta}_m)  \, d\eta  d\zeta  }
 \\
& \qquad \lesssim \iint_{\xi=\eta-\zeta}  \rho_\mu(|\eta-\zeta|)  | q_j^\pm (\eta, \zeta)| 
  |\widehat{ f_{\lambda_1} }(\eta)| |   \widehat{ g_{\lambda_2} }(\zeta)|  \delta(\tau+\angles{\eta}_m\mp \angles{\zeta}_m) \, d\eta  d\zeta ,
\end{align*}
where on the second line the sign change to $\mp$ in the delta function is because of the complex conjugation in $\innerprod{\cdot}{\cdot}$.

By \eqref{q-est} we have
\begin{equation}\label{qj-dydicest}
q^+_j(\eta, \zeta)| \lesssim  \left(\frac{   \mu( \mu \wedge \lambda_1 \wedge \lambda_2) }{ \lambda_1\lambda_2 }\right)^{\frac12}  \quad \text{and} \quad |q^-_j(\eta, \zeta)| \lesssim 1,
\end{equation}
where for the estimate on $q_j^+$ we used $$  |\eta-\zeta|- ||\eta| - |\zeta|| \lesssim   \mu \wedge \lambda_1 \wedge \lambda_2.$$ 
Indeed, this is obvious if $\mu \lesssim \lambda_1\sim \lambda_2$. Now assume
 $ \lambda_2 \ll \lambda_1\sim \mu$.  Then
 \begin{align*}
 |\eta-\zeta|- ||\eta| - |\zeta||=\frac{ |\eta-\zeta|^2- (|\eta| - |\zeta|)^2 }{|\eta-\zeta|+ ||\eta| - |\zeta||}= \frac{ 2|\eta||\zeta| -2\eta\cdot \zeta}{|\eta-\zeta|+ ||\eta| - |\zeta||}  \lesssim \lambda_2.
 \end{align*}
 The case  $ \lambda_1 \ll \lambda_2\sim \mu$ also follows by symmetry.

Now using the estimate for  $q_j^+$ in \eqref{qj-dydicest} we have for the (++) interaction 
\begin{align*}
& \Bigabs{\mathcal F_{t,x} [P_\mu Q_j( S_m(t)f_{\lambda_1},  S_m(t)g_{ \lambda_2})](\tau, \xi)}
  \\
&\qquad \lesssim \left(\frac{   \mu( \mu \wedge \lambda_1 \wedge \lambda_2) }{ \lambda_1\lambda_2 }\right)^{\frac12}   \iint_{\xi=\eta-\zeta} \rho_\mu(|\eta-\zeta|)  
  |\widehat{ f_{\lambda_1}}(\eta)| |   \widehat { g_{\lambda_2}}(\zeta)| \delta(\tau+\angles{\eta}_m-\angles{\zeta}_m) \, d\eta  d\zeta 
  \\
  &\qquad = \left(\frac{   \mu( \mu \wedge \lambda_1 \wedge \lambda_2) }{ \lambda_1\lambda_2 }\right)^{\frac12} \mathcal F_{t,x} \left [P_\mu \left( S(t) \mathcal F_x^{-1}(|\widehat{ f_{\lambda_1}} | ) \cdot 
   S(-t) \mathcal F_x^{-1}(|\widehat{ g_{\lambda_2}} |  ) \right) \right](\tau, \xi),
\end{align*}
By Plancherel and Lemma \ref{lmBiest}\eqref{Biest+-} we obtain
\begin{align*}
\norm{ P_\mu Q_j( S_m(t)f_{\lambda_1},  S_m(t)g_{\lambda_2})}
&\lesssim \left(\frac{   \mu( \mu \wedge \lambda_1 \wedge \lambda_2) }{ \lambda_1\lambda_2 }\right)^{\frac12}\norm{  P_\mu \left( S_m(t) \mathcal F_x^{-1}(|\widehat{ f_{\lambda_1}} | ) \cdot 
   S_m(-t) \mathcal F_x^{-1}(|\widehat{ g_{\lambda_2}} |  ) \right)} 
\\
& \lesssim 
\begin{cases}
& (\lambda_1 \wedge \lambda_2) \norm{f_{ \lambda_1}} \norm{g_{ \lambda_2}} \quad  \text{if} \ \ \lambda_1\nsim \lambda_2,
\\
& \mu (\mu /\lambda_1 )^{\frac12}  \norm{f_{ \lambda_1}} \norm{g_{ \lambda_2}} \quad  \text{if} \ \ \mu \lesssim \lambda_1 \sim\lambda_2.
\end{cases}
\end{align*}

Similarly, we use the estimate for $q_j^-$ in \eqref{qj-dydicest} to estimate the (+-) interaction as
\begin{align*}
&\Bigabs{ \mathcal F_{t,x} [ P_\mu Q_j( S(t)f_{\lambda_1},  S(-t)g_{\lambda_2})](\tau, \xi)}
\\
&\qquad \lesssim \iint_{\xi=\eta-\zeta}  \rho_\mu(|\eta-\zeta|)  | q_j^-(\eta, \zeta)| 
  |\widehat{ f_{\lambda_1}}(\eta)| |   \widehat { g_{\lambda_2}}(\zeta)|  \delta(\tau+\angles{\eta}_m+\angles{\zeta}_m)  \, d\eta
  \\
  &\qquad =  \mathcal F_{t,x} \left [ P_\mu \left( S_m(t) \mathcal F_x^{-1}(|\widehat{ f_{\lambda_1}} | ) \cdot 
   S_m(t) \mathcal F_x^{-1}(|\widehat{ g_{\lambda_2}} |  ) \right) \right](\tau, \xi).
\end{align*}
By Plancherel and Lemma \ref{lmBiest}\eqref{Biest++}  we obtain
\begin{align*}
\norm{ P_\mu Q_j( S_m(t)f_{\lambda_1},  S_m(-t)g_{\lambda_2})} &\lesssim  
\norm{ P_\mu \left( S_m(t) \mathcal F_x^{-1}(|\widehat{ f_{\lambda_1}} | ) \cdot 
   S_m(t) \mathcal F_x^{-1}(|\widehat{ g_{\lambda_2}} |  ) \right)} 
\\
&\lesssim \begin{cases}& (\lambda_1 \wedge \lambda_2)  \norm{f_{\lambda_1}} \norm{g_{\lambda_2}}  \quad \text{if} \ \   \lambda_1\nsim\lambda_2
 \\
 & \mu  \norm{f_{\lambda_1}} \norm{g_{\lambda_2}} \quad \text{if} \    \mu\lesssim \lambda_1 \sim\lambda_2 .
 \end{cases}
\end{align*}

\end{proof}

Applying Lemma \ref{lm-transfer}  (the transfer principle) to Lemma \ref{lmNullest} we obtain the following.
\begin{corollary}[Null-form estimates in the $U^2$-space]\label{lmNullestU2}
Let $m\ge 0$ and $\mu, \lambda_1, \lambda_2 \ge 1$.
\begin{enumerate}[(i)]
\item (++) interaction: \label{NullestU2++} For all $u_{ \lambda_1} , v_{ \lambda_2} \in U^2_+$ we have
\begin{align*}
\norm{ P_\mu Q_j(u_{ \lambda_1 }, v_{ \lambda_2})} \lesssim 
\begin{cases}
 & (\lambda_1\wedge \lambda_2) \norm{u_{\lambda_1} }_{U^2_+} \norm{v_{\lambda_2} }_{U^2_+}  \quad \text{if} \ \   \lambda_1\nsim\lambda_2 ,
 \\
 &  \mu(\mu/\lambda_1)^\frac12  \norm{u_{\lambda_1} }_{U^2_+} \norm{v_{\lambda_2} }_{U^2_+}   \quad \text{if} \ \    \mu \lesssim \lambda_1 \sim\lambda_2.
 \end{cases}
\end{align*}

\item (+-) interaction:  \label{NullestU2+-}  For all $u_{\lambda_1}\in U^2_+$ and $v_{ \lambda_2} \in U^2_-$, we have
\begin{align*}
\norm{ P_\mu Q_j(u_{ \lambda_1 }, v_{ \lambda_2})} \lesssim 
\begin{cases}
 & (\lambda_1\wedge \lambda_2) \norm{u_{\lambda_1} }_{U^2_+} \norm{v_{\lambda_2} }_{U^2_-}  \quad \text{if} \ \   \lambda_1\nsim\lambda_2 ,
 \\
 &  \mu  \norm{u_{\lambda_1} }_{U^2_+} \norm{v_{\lambda_2} }_{U^2_-}   \quad \text{if} \ \   \mu \lesssim \lambda_1 \sim\lambda_2.
 \end{cases}
\end{align*}

\end{enumerate}

\end{corollary}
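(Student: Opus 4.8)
The plan is to derive Corollary \ref{lmNullestU2} directly from the free-wave estimates of Lemma \ref{lmNullest} by invoking the transfer principle, Lemma \ref{lm-transfer}. The key observation is that for fixed $j$ and fixed dyadic $\mu,\lambda_1,\lambda_2$, the map $(\phi,\psi)\mapsto T(\phi,\psi):=P_\mu Q_j(P_{\lambda_1}\phi, P_{\lambda_2}\psi)$ is an operator of exactly the type to which Lemma \ref{lm-transfer} applies: by \eqref{qb} it is given on the Fourier side by integration against the kernel $\rho_\mu(|\eta-\zeta|)\,q_j^\pm(\eta,\zeta)$ together with the cutoffs $\rho_{\lambda_1},\rho_{\lambda_2}$ and the constant matrix $\beta$, so it sends $L^2\times L^2$ into $L^1_{loc}(\R^{1+3})$; it is linear in the first slot and conjugate-linear in the second (the conjugation sitting inside $\innerprod{\cdot}{\cdot}$), hence real-bilinear. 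With $p=r=2$, Lemma \ref{lmNullest} provides precisely the hypothesis of the transfer principle, namely $\norm{T(S_m(t)f_{\lambda_1}, S_m(\pm t)g_{\lambda_2})}_{L^2_{t,x}}\lesssim C(\mu,\lambda_1,\lambda_2)\norm{f_{\lambda_1}}\norm{g_{\lambda_2}}$, where $C$ denotes the piecewise-constant factor on the right-hand side of Lemma \ref{lmNullest}.

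For the (++) interaction I would apply Lemma \ref{lm-transfer} with both input propagators carrying the sign $+$: its hypothesis is exactly part \eqref{Nullest++} of Lemma \ref{lmNullest}, and its conclusion gives $\norm{P_\mu Q_j(u_{\lambda_1},v_{\lambda_2})}_{L^2_{t,x}}\lesssim C(\mu,\lambda_1,\lambda_2)\norm{u_{\lambda_1}}_{U^2_+}\norm{v_{\lambda_2}}_{U^2_+}$, which is the asserted bound once one recalls $P_{\lambda_i}u_{\lambda_i}=u_{\lambda_i}$. For the (+-) interaction one argues identically, feeding in part \eqref{Nullest+-} of Lemma \ref{lmNullest}, except that the two arguments must now be atomized against the two different propagators $S_m(t)$ and $S_m(-t)$. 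This is the mixed-sign form of the transfer principle: one simply decomposes each argument into $U^2$-atoms in turn (the standard proof of Lemma \ref{lm-transfer} is argument-by-argument and is insensitive to which free propagator sits in which slot), or, equivalently, observes that complex conjugation intertwines $S_m(-t)$ with $S_m(t)$ isometrically on $L^2$, so the conjugate-linear $\psi$-slot is handled by the same atomic argument. This yields the stated $U^2_+\times U^2_-$ estimate.

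The only point needing a word of care is exactly this mixed-sign/conjugate-linear bookkeeping in the (+-) case; no new idea is required. Everything else is automatic: the dyadic cutoffs $P_\mu,P_{\lambda_1},P_{\lambda_2}$ are bounded Fourier multipliers that commute through the argument, the null symbols $q_j^\pm$ enter only through the pointwise bounds already used to prove Lemma \ref{lmNullest}, and the piecewise constants $C(\mu,\lambda_1,\lambda_2)$ are carried along unchanged. Hence the corollary is essentially a bookkeeping consequence of Lemmas \ref{lmNullest} and \ref{lm-transfer}, and there is no substantive obstacle.
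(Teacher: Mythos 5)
Your proposal is correct and is essentially the paper's own proof: the paper derives Corollary~\ref{lmNullestU2} in one line by applying the transfer principle (Lemma~\ref{lm-transfer}, with $p=r=2$ and the appropriate sign in each slot) to the free-wave null-form estimates of Lemma~\ref{lmNullest}. The one small imprecision in your write-up is that the transfer principle is stated for a multilinear operator $T:L^2_x\times\cdots\times L^2_x\to L^1_{loc}(\R^3)$ acting purely in the spatial variables (applied pointwise in $t$), not an operator valued in $L^1_{loc}(\R^{1+3})$; $Q_j$ with the cutoffs $P_\mu,P_{\lambda_1},P_{\lambda_2}$ is indeed such a spatial bilinear Fourier multiplier, so the substance of your argument is unaffected.
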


\subsection{Bilinear estimates} \label{SecBiest}
In this section we express the bilinear terms in \eqref{qb},  $Q_j$ and $B_j$, in physical space. We then apply Cauchy-Schwarz and Strichartz estimates to derive bilinear estimates for $Q_j$ and $B_j$.

\begin{lemma}\label{lm-biest-strz}
Let $Q$ denote any one of the
$Q_j$'s ($j=1, \cdots, 3$) and $B$ denote any one of the $B_j$'s ($j=1, \cdots, 4$). For $\lambda_1, \lambda_2\ge 1$ assume that $u_{ \lambda_1} \in V^2_{\pm}$  and $v_{ \lambda_2} \in V^2_{\pm'}$, where $\pm$ and $\pm'$ are two independent signs. Then
\begin{align}
\label{BiestQU4}
    \norm{  P_\mu  Q(u_{ \lambda_1}, v_{ \lambda_2} )  } 
&\lesssim  (\lambda_1 \lambda_2)^{\frac12}  \norm{u_{ \lambda_1}}_{U^4_{\pm}}  \norm{v_{ \lambda_2}}_{U^4_{\pm'}} ,
\\
\label{BiestQV2}
    \norm{  P_\mu  Q(u_{ \lambda_1}, v_{ \lambda_2} )  } 
&\lesssim  (\lambda_1 \lambda_2)^{\frac12}  \norm{u_{ \lambda_1}}_{V^2_{\pm}}  \norm{v_{ \lambda_2}}_{V^2_{\pm'}} ,
\\
\label{BiestB}
    \norm{  P_\mu  B(u_{ \lambda_1}, v_{ \lambda_2} )  } 
&\lesssim \frac{(\lambda_1 \lambda_2)^{\frac12}}{ (\lambda_1\wedge \lambda_2)}  \norm{u_{ \lambda_1}}_{V^2_{\pm}}  \norm{v_{ \lambda_2}}_{V^2_{\pm'}} .
   \end{align}

\end{lemma}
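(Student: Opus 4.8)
The plan is to express the bilinear operators $Q$ and $B$ in physical space and then estimate them by Cauchy--Schwarz in the convolution variable combined with the Strichartz estimates of Lemma \ref{lm-str}. Concretely, writing $Q = Q_j$ via \eqref{qb}, the presence of the null symbol $q_j^\pm(\eta,\zeta)$ is what distinguishes $Q$ from $B$: for $Q$ we discard the null structure entirely and only use the trivial bound $|q_j^\pm(\eta,\zeta)| \lesssim 1$ from \eqref{qle1}, so that pointwise in Fourier space $|\mathcal F_x Q(u_{\lambda_1},v_{\lambda_2})(\xi)| \lesssim (|\widehat{u_{\lambda_1}}| * |\widehat{v_{\lambda_2}}|)(\xi)$. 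Undoing the Fourier transform, $|Q(u_{\lambda_1},v_{\lambda_2})|$ is dominated pointwise in $(t,x)$ by the product $\widetilde u_{\lambda_1}(t,x)\,\widetilde v_{\lambda_2}(t,x)$, where $\widetilde u_{\lambda_1} = \mathcal F_x^{-1}|\widehat{u_{\lambda_1}}|$ has the same $L^2$ norm as $u_{\lambda_1}$ and the same frequency support. Hence $\|P_\mu Q(u_{\lambda_1},v_{\lambda_2})\| \lesssim \|\widetilde u_{\lambda_1}\, \widetilde v_{\lambda_2}\|_{L^2_{t,x}} \le \|\widetilde u_{\lambda_1}\|_{L^4_{t,x}} \|\widetilde v_{\lambda_2}\|_{L^4_{t,x}}$ by Cauchy--Schwarz in time and space.

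Next I would apply the wave--Strichartz estimate of Lemma \ref{lm-str} with the admissible pair $(q,r) = (4,4)$ (which satisfies $\tfrac1q+\tfrac1r = \tfrac12$ with $2 \le r < \infty$): this gives $\|w_{\lambda_j}\|_{L^4_t L^4_x} \lesssim \lambda_j^{1/2} \|w_{\lambda_j}\|_{U^4_{\pm}}$. Using the transfer principle (Lemma \ref{lm-transfer}), or rather applying Strichartz directly in the $U^4$ norm as stated in Lemma \ref{lm-str}, we obtain \eqref{BiestQU4} once we observe that passing from $u_{\lambda_1}$ to $\widetilde u_{\lambda_1}$ does not change the relevant norms (the frequency projection and the $L^2$-based structure are preserved by $\mathcal F_x^{-1}|\widehat\cdot|$; alternatively one just runs the Strichartz argument on $\widetilde u_{\lambda_1}, \widetilde v_{\lambda_2}$ treating them as free-wave data since $\|\widetilde u_{\lambda_1}\|_{L^4_{t,x}}$ with the free flow is controlled by $\lambda_1^{1/2}\|\widetilde u_{\lambda_1}\|_{L^2}= \lambda_1^{1/2}\|u_{\lambda_1}\|_{L^2}$, then absorb into $U^4$). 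To get \eqref{BiestQV2}, I would use the embedding $V^2_\pm \subset U^q_\pm$ for any $q > 2$ (Proposition \ref{Prop-Vp}\eqref{V-embCont}, transported to the propagator-adapted spaces), in particular $V^2_\pm \subset U^4_\pm$, so \eqref{BiestQV2} follows immediately from \eqref{BiestQU4}.

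For the estimate \eqref{BiestB} on $B = B_j$, the difference is that the symbols $b_j^\pm(\eta,\zeta)$ from \eqref{qb12}, \eqref{qb3}, \eqref{b4} carry a gain: each of $b_1^\pm, b_2^\pm, b_3^\pm = r^\pm, b_4^\pm$ is of size $\lesssim m\, \angles{\eta}_m^{-1}\angles{\zeta}_m^{-1}(\text{lower order}) $ or $\lesssim (1 - |\hat\eta||\hat\zeta|)\lesssim m^2 \angles{\eta}_m^{-1}\angles{\zeta}_m^{-1}$, and since $\angles{\eta}_m, \angles{\zeta}_m \gtrsim \lambda_1, \lambda_2$ on the relevant frequency supports (recall $\lambda_1,\lambda_2 \ge 1$ and the mass $m$ is a fixed constant), one gets the uniform pointwise bound $|b_j^\pm(\eta,\zeta)| \lesssim (\lambda_1 \vee \lambda_2)^{-1} = (\lambda_1\wedge\lambda_2)^{-1}\cdot\frac{\lambda_1\wedge\lambda_2}{\lambda_1\vee\lambda_2}\lesssim (\lambda_1\wedge\lambda_2)^{-1}$; more carefully, $|b_j^\pm| \lesssim \min(\lambda_1,\lambda_2)^{-1}$ suffices, noting for instance $b_2^\pm$ involves $\eta(\angles{\zeta}_m - |\zeta|) = \eta \cdot O(m^2/\angles{\zeta}_m)$, giving a factor $|\eta|/(\angles{\eta}_m\angles{\zeta}_m) \lesssim \angles{\zeta}_m^{-1} \lesssim \lambda_2^{-1}$, and symmetrically $\lambda_1^{-1}$, whence $\lesssim (\lambda_1 \wedge \lambda_2)^{-1}$. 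Then repeating the Cauchy--Schwarz plus $L^4_{t,x}$-Strichartz argument yields $\|P_\mu B(u_{\lambda_1},v_{\lambda_2})\| \lesssim (\lambda_1\wedge\lambda_2)^{-1}(\lambda_1\lambda_2)^{1/2}\|u_{\lambda_1}\|_{V^2_\pm}\|v_{\lambda_2}\|_{V^2_{\pm'}}$ via the $V^2 \subset U^4$ embedding, which is \eqref{BiestB}.

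The main obstacle, and the only genuinely non-routine point, is verifying the uniform symbol bound $|b_j^\pm(\eta,\zeta)| \lesssim (\lambda_1 \wedge \lambda_2)^{-1}$ for each of the four $b_j^\pm$ on the dyadic frequency annuli $|\eta|\sim\lambda_1$, $|\zeta|\sim\lambda_2$: one must carefully exploit that the ``$b$-terms'' are exactly the mass-induced corrections, each containing a factor of the form $\angles{\xi}_m - |\xi| = O(m^2/\angles{\xi}_m)$ or an overall $m\angles{\cdot}_m^{-1}$ or $m^2\angles{\cdot}_m^{-1}\angles{\cdot}_m^{-1}$, so that the denominators $\angles{\eta}_m \gtrsim \max(1,\lambda_1)$, $\angles{\zeta}_m \gtrsim \max(1,\lambda_2)$ produce the claimed decay. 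Everything else --- the pointwise domination in Fourier space, Cauchy--Schwarz in $(t,x)$, the $(4,4)$ Strichartz estimate, and the $V^2 \hookrightarrow U^4$ embedding --- is standard and follows directly from the results already assembled in Sections \ref{secfuncspaces} and \ref{LinBi-Est}.
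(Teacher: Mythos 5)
Your argument breaks down at the step where you pass from $\|\widetilde u_{\lambda_1}\,\widetilde v_{\lambda_2}\|_{L^2_{t,x}}$ to $\|\widetilde u_{\lambda_1}\|_{L^4_{t,x}}\|\widetilde v_{\lambda_2}\|_{L^4_{t,x}}$ and then invoke Strichartz. The function $\widetilde u_{\lambda_1} = \mathcal F_x^{-1}|\widehat{u_{\lambda_1}}|$ is \emph{not} a member of $U^4_{\pm}$ controlled by $\|u_{\lambda_1}\|_{U^4_{\pm}}$, and Lemma~\ref{lm-str} does not apply to it. To see that this is fatal: take $u_{\lambda_1} = S_m(\pm t)f_{\lambda_1}$ a single free wave. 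Then $\widehat{u_{\lambda_1}}(t,\xi)=e^{\mp it\angles{\xi}_m}\widehat f_{\lambda_1}(\xi)$, so $|\widehat{u_{\lambda_1}}(t,\cdot)| = |\widehat f_{\lambda_1}|$ is time-independent and $\widetilde u_{\lambda_1}(t,x)=\mathcal F_x^{-1}|\widehat f_{\lambda_1}|(x)$ does not depend on $t$ at all; its $L^4_tL^4_x$ norm is infinite, while $\|u_{\lambda_1}\|_{U^4_\pm}=\|f_{\lambda_1}\|_{L^2}$ is finite. The ``absolute value of the Fourier transform'' trick preserves nothing but the $L^2_x$ size at each fixed time, and in particular destroys the adapted-flow structure that $U^p_{\pm}$ encodes. (You may have in mind the proof of Lemma~\ref{lmNullest}, but there the trick is used on \emph{free waves} and is followed by the $L^2_{t,x}$ \emph{bilinear} estimates of Lemma~\ref{lmBiest}, which see the full oscillatory structure — not by $L^4$ Strichartz on each factor; the mechanism is different and the step you need here is not available.)

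What makes the estimate work — and what the paper does — is to exploit that each symbol $q_j^\pm(\eta,\zeta)$ and $b_j^\pm(\eta,\zeta)$ is a finite sum of \emph{separable} (tensor-product) factors, so that $Q_j$ and $B_j$ can be written in physical space as a finite sum of products $\innerprod{\beta A u}{C v}$ with $A,C$ \emph{single-variable} Fourier multipliers; see \eqref{Nullform-phy} and \eqref{Biform-phy}. Here $A,C$ are built from the Riesz-type operators $R_j = \partial_j/\angles{D}_m$, $R=|D|/\angles{D}_m$ (bounded on $L^p$, $1<p<\infty$, by \eqref{Reiz-R}), together with $\angles{D}_m^{-1}$ and $1-R$, which on the dyadic annulus $|\xi|\sim\lambda$ gain a factor $\angles{\lambda}_m^{-1}$ and $\angles{\lambda}_m^{-2}$ respectively (this is \eqref{Reiz-D}--\eqref{Reiz-1R}, the quantitative version of the smallness of the $b_j$'s that you correctly identified). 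The crucial point is that such single-variable Fourier multipliers commute with the free propagator $S_m(\pm t)$ and hence are bounded on $U^p_{\pm}$ and $V^p_{\pm}$; so $Au_{\lambda_1}$ is still a $U^4_{\pm}$ function with $\|Au_{\lambda_1}\|_{U^4_{\pm}}\lesssim \|u_{\lambda_1}\|_{U^4_{\pm}}$, and one may legitimately apply H\"older followed by the $L^4_{t,x}$ Strichartz estimate of Lemma~\ref{lm-str}. Your proof discards this separable structure at the very first step by taking absolute values of the symbol inside the Fourier integral, which is exactly what prevents the Strichartz argument from being applied to the resulting factors. The $V^2\subset U^4$ embedding step and the qualitative observation about the mass-induced decay of the $b_j$'s are fine, but the core of the argument needs to go through the physical-space factorization rather than a pointwise Fourier bound.
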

\begin{proof}
By Proposition \ref{Prop-Vp}\eqref{V-embCont}, the estimate \eqref{BiestQV2} follows from \eqref{BiestQU4}. Thus, we only need to prove \eqref{BiestQU4} and \eqref{BiestB}.

The null forms $Q_j( u, v)$ in \eqref{qb} can be written in physical space as follows:
\begin{equation}\label{Nullform-phy}
\begin{split}
  Q_1(u, v)&= \innerprod{ \beta R u   }{R v }\mp \innerprod{\beta  R_j  u   }{R^j v },
  \\
    Q_2(u, v)&= \innerprod{\beta  R_j u   }{ \alpha^j R  v }\pm \innerprod{  \beta R u   }{ \alpha^jR_j v },
  \\
   Q_3(u, v)&= \innerprod{\beta  R_1 u   }{ \gamma  R_2 v }\pm \innerprod{  \beta R_2 u   }{ \gamma R_1 v },
   \end{split}
\end{equation}
where $$ R_j= \frac{ \partial_j}{\angles{D}_m}\quad \text{and} \quad    R= \frac{ |D| }{\angles{D}_m} $$ are Riesz operators.
Theses operators are bounded in $L^p$ for $1<p<\infty$, i.e., 
\begin{equation}\label{Reiz-R}
\norm{  R_j f_\lambda}_{L^p}\lesssim \norm{ f_\lambda }_{L^p}\quad \text{and} \quad   \norm{  R f_\lambda}_{L^p} \lesssim \norm{ f_\lambda }_{L^p}.
\end{equation}
Now by H\"{o}lder, \eqref{Reiz-R} and Lemma \ref{lm-str} we have 
\begin{align*}
    \norm{  P_\mu  Q_j(u_{ \lambda_1}, v_{ \lambda_2} )  } 
&\lesssim  \norm{u_{ \lambda_1}}_{L^4_{t,x}}  \norm{v_{ \lambda_2}}_{L^4_{t,x}}
\\
& \lesssim (\lambda_1 \lambda_2)^{\frac12}  \norm{u_{ \lambda_1}}_{U^4_{\pm}}  \norm{v_{ \lambda_2}}_{U^4_{\pm'}} .
   \end{align*}

Next we prove \eqref{BiestB}. The bilinear terms $B_j( u, v)$ in \eqref{qb} can be written in physical space as follows:
\begin{equation}\label{Biform-phy}
\begin{split}
    B_1(u, v)&=  \innerprod{ \beta (1-R)u   }{ v }+\innerprod{\beta Ru   }{(1-R) v },
    \\
    B_2(u, v)&= - \innerprod{ \beta R_j u   }{ \alpha^j (1-R)v }\pm 
    \innerprod{ (1-R)\beta u   }{\alpha^j R_j  v },
    \\
    B_3(u, v)&= \mp \innerprod{ u   }{   \angles{D}_m^{-1} v }+
    \innerprod{  \angles{D}_m^{-1} u   }{ v } \mp  \innerprod{  R_j  u   }{\alpha^j \angles{D}^{-1}v }
    \\
   &  \qquad  \mp  \innerprod{  \angles{D}_m^{-1}   u   }{\alpha^j R_jv }\mp  \innerprod{  \beta \angles{D}_m^{-1}   u   }{\angles{D}_m^{-1}  v },
    \\
  B_4(u, v)&=- \innerprod{   \angles{D}_m^{-1}u   }{ v }.
\end{split}
\end{equation}
Note that
\begin{equation}\label{Reiz-D}
\norm{   \angles{D}_m^{-1}  f_\lambda}_{L^2}\lesssim   \angles{\lambda}_m^{-1} \norm{ f_\lambda }_{L^2}
\end{equation}
and 
\begin{equation}\label{Reiz-1R}
\norm{(1-R)f_\lambda}_{L^2} \lesssim \angles{\lambda}_m^{-2}\norm{f_\lambda}_{L^2},
\end{equation}
where in the latter case we used Plancherel and the fact that
\begin{align*}
1-\frac{ |\xi|}{\angles{\xi}_m}=\frac{\angles{\xi}_m-|\xi|}{\angles{\xi}_m}=\frac{m^2}{\angles{\xi}_m (\angles{\xi}_m+|\xi|)}\sim m^2\angles{\xi}_m^{-2}.
\end{align*}

Now using H\"{o}lder,  Lemma \ref{lm-str}, Proposition \ref{Prop-Vp}\eqref{V-embCont} and \eqref{Reiz-R}--\eqref{Reiz-1R} we obtain
\begin{align*}
    \norm{  P_\mu  B_1(u_{ \lambda_1}, v_{ \lambda_2} )  } 
&\lesssim  \norm{(1-R)u_{ \lambda_1}}_{L^4_{t,x}}  \norm{v_{ \lambda_2}}_{L^4_{t,x}}  + \norm{u_{ \lambda_1}}_{L^4_{t,x}}  \norm{ (1-R)v_{ \lambda_2}}_{L^4_{t,x}} 
\\
&\lesssim (\lambda_1 \lambda_2)^{\frac12}  \left\{\norm{(1-R)u_{ \lambda_1}}_{U^4_{\pm}}  \norm{v_{ \lambda_2}}_{U^4_{\pm'}} +  \norm{u_{ \lambda_1}}_{U^4_{\pm}}  \norm{ (1-R)v_{ \lambda_2}}_{U^4_{\pm'}} \right\}.
\\
&\lesssim (\lambda_1 \lambda_2)^{\frac12}  \left\{\norm{(1-R)u_{ \lambda_1}}_{V^2_{\pm}}  \norm{v_{ \lambda_2}}_{V^2_{\pm'}} +  \norm{u_{ \lambda_1}}_{V^2_{\pm}}  \norm{ (1-R)v_{ \lambda_2}}_{V^2_{\pm'}} \right\}.
\\
&\lesssim \frac{(\lambda_1 \lambda_2)^{\frac12}}{ (\lambda_1\wedge \lambda_2)^2} \norm{u_{ \lambda_1}}_{V^2_{\pm}}  \norm{v_{ \lambda_2}}_{V^2_{\pm'}} .
   \end{align*}
Similarly, 
\begin{align*}
    \norm{  P_\mu  B_2(u_{ \lambda_1}, v_{ \lambda_2} )  } 
&\lesssim \frac{(\lambda_1 \lambda_2)^{\frac12}}{ (\lambda_1\wedge \lambda_2)^2}  \norm{u_{ \lambda_1}}_{V^2_{\pm}}  \norm{v_{ \lambda_2}}_{V^2_{\pm'}} 
   \end{align*}
and for $j=3, 4$
\begin{align*}
    \norm{  P_\mu  B_j(u_{ \lambda_1}, v_{ \lambda_2} )  } 
&\lesssim \frac{(\lambda_1 \lambda_2)^{\frac12}}{ (\lambda_1\wedge \lambda_2)}   \norm{u_{ \lambda_1}}_{V^2_{\pm}}  \norm{v_{ \lambda_2}}_{V^2_{\pm'}} 
   \end{align*}

\end{proof}

%%%%%%%%%%%%%%%%%%%%%%%%%%%%%%%%%%%%%%%%%

%%%%%%%%%%%%%%%%%%%%%%%%%%%%%%%%%%%%%%%%%

 \section{Reduction of Theorem \ref{mainthm1} to nonlinear estimates}
\label{sec-ProofMainThm}

 Let $I=[0, \infty)$.
We define $X^s_\pm $ to be the complete space of all functions 
 $u:I \to L^2$ such that $P_\mu u \in U^2_{\pm}(I, L^2) $ for all $\mu\ge 1$, with the norm 
$$  \norm{u}_{X^s_\pm } =
\left( \sum_{\mu\ge 1} \mu^{2s} \norm{  \mathbb{1}_I P_\mu u }_{ U^2_{\pm } }^2  \right)^{ \frac{1}{2} } < \infty,$$
 \text{where } $$\norm{f}_{ U^2_{\pm } } = \norm{S_m(\mp t) f}_{ U^2} .$$

The Duhamel representation of \eqref{Dirac3}-\eqref{Data3} is given by
 \begin{equation}\label{IntegralEq}
\psi^\pm(t) = S_m (\pm t) \psi_0^\pm  + J_{m,\pm} (\psi)(t),
\end{equation}
where 
\begin{equation}\label{Duhamel}
J_{m,\pm}(\psi)(t) =  \Pi^\pm_m (D)\int_0^t S_m(\pm(t-t'))  \left[(V \ast \innerprod{\beta\psi}{\psi} )\beta \psi\right] (t') dt'.
\end{equation}

 The linear part of \eqref{IntegralEq} satisfies the following estimate:
 \begin{equation}\label{smpsi}
\begin{split}
\norm{S_m(\pm t) \psi_0^\pm }^2_{X^s_\pm}&=\sum_{\mu\ge 1} \mu^{2s} \norm{ \mathbb{1}_I  S_m (\pm t) P_\mu \psi_0^\pm }_{ U^2_{\pm } }^2 
\\
&=\sum_{\mu\ge 1} \mu^{2s} \norm{ \mathbb{1}_I  P_\mu \psi_0^\pm }_{ U^2}^2
\\
& \sim \norm{\psi_0^\pm}^2_{H^s}.
\end{split}
 \end{equation}

 So it remains to estimate $J_{m,\pm}(\psi)(t)$. 
To this end we let $\epsilon=(\epsilon_1, \epsilon_2, \epsilon_3)$, where $\epsilon_j \in \{+, -\}$. Since $\psi= \psi^+ + \psi^-,$ where $ \psi^\pm=\Pi^\pm_m (D) \psi $, we can write
$$
J_{m,\pm}(\psi)(t) = \sum _{\epsilon_j \in \{+, -\}} J^\epsilon_\pm(\psi)(t),
$$
 where
\begin{equation}\label{Jpm}
J_{m,\pm}^\epsilon(\psi)(t) = i
\Pi^\pm_m (D)\int_0^t S_m(\pm(t-t'))  \left[(V \ast \innerprod{\beta \psi^{\epsilon_1}}{\psi^{\epsilon_2}} )\beta \psi^{\epsilon_3}\right] (t') dt'.
\end{equation}

 Theorem \ref{mainthm1} will follow by a contraction argument from \eqref{smpsi} and 
 the following cubic estimates for $J_{m,\pm}(\psi)(t)$ (see Subsection 5.3 below).
\begin{proposition}\label{Prop-WpEst}
Let $m\ge 0$ and $s>0$. For all $ \psi^\pm \in X^s_\pm $, we have
 \begin{align*}
 \norm{J_{m,\pm}^\epsilon( \psi)}_{X^s_\pm} \lesssim \prod_{j=1}^3  \norm{\psi^{\epsilon_j} }_{X^s_{\epsilon_j}}.
 \end{align*}
 \end{proposition}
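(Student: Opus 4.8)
The plan is to estimate $J_{m,\pm}^\epsilon(\psi)$ by duality, exploiting the fact that $X^s_\pm$ is built from $U^2_\pm$ and that $V^2_\pm$ is (essentially) its dual via Theorem \ref{thm-dual} and Proposition \ref{prop-dual}. Concretely, I would first reduce to a purely multilinear space-time integral estimate: testing $P_\mu J_{m,\pm}^\epsilon(\psi)$ against a function $u_4$ that is localized at frequency $\mu$ and has $\norm{u_4}_{V^2_{\mp}} \le 1$, the Duhamel term pairs (via the duality pairing and $\Pi^\pm_m(D)$ boundedness) into an expression of the shape
\[
\Big| \int V \ast \innerprod{\beta \psi^{\epsilon_1}}{\psi^{\epsilon_2}} \cdot \innerprod{\beta \psi^{\epsilon_3}}{ u_4} \d t \d x \Big|,
\]
where I have used $\beta^2 = I$ and the self-adjointness of $\beta$ to move one $\beta$ onto the test function. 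Thus Proposition \ref{Prop-WpEst} is reduced, after dyadic decomposition $\psi^{\epsilon_j} = \sum_{\lambda_j} P_{\lambda_j}\psi^{\epsilon_j}$, to a frequency-localized quadrilinear estimate for the integral $I(\lambda)$ of Lemma \ref{lm-interpu2ub} — this is precisely the ``Lemma \ref{KeyLemma1}'' referred to in the remarks — followed by a summation lemma (the ``Lemma \ref{lemma-summing}'' of the remarks) over the four dyadic parameters $\lambda_1,\lambda_2,\lambda_3,\mu$.

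For the core quadrilinear bound I would split $\innerprod{\beta\psi^{\epsilon_1}}{\psi^{\epsilon_2}}$ and $\innerprod{\beta\psi^{\epsilon_3}}{u_4}$ each into the null-form pieces $Q_j$ and the smoother pieces $B_j$ according to the decomposition \eqref{pp}–\eqref{b4}; note that here I need the null structure of the pairing $\innerprod{\beta\Pi^+_m\psi^+}{\Pi^\pm_m\psi^\pm}$, so the first spinor factor should always carry the $\Pi^+$ projection — one uses \eqref{Pbetaidentities} and conjugation symmetry to arrange both bilinear factors in that form. Then $V\ast(\cdot)$ contributes the Fourier multiplier $\angles{D}^{-2}$, giving a gain of $(\min(\lambda_1,\lambda_2) \vee \ldots)^{-2}$ or similar depending on which frequency is the output of the first bilinear factor. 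For each resulting term I would apply: the Cauchy–Schwarz/Hölder plus Strichartz bilinear estimates of Lemma \ref{lm-biest-strz} to one bilinear factor, and the sharpened $L^2$ bilinear null-form estimates of Lemma \ref{lmNullest}/Corollary \ref{lmNullestU2} (or Lemma \ref{lmBiest} for the $B_j$'s) to the other, choosing which factor gets which tool so as to produce a bound $C_1(\lambda)\prod_{j=1}^3\norm{u_j}_{U^2_{\epsilon_j}}\norm{u_4}_{U^2_{\mp}}$ together with a companion bound $C_2(\lambda)\prod_{j=1}^3\norm{u_j}_{U^2_{\epsilon_j}}\norm{u_4}_{U^p_{\mp}}$ with at most polynomial $C_2(\lambda)$; Lemma \ref{lm-interpu2ub} then upgrades the $u_4$ norm from $U^2$ to $V^2$ at the cost of a logarithmic factor $1+\ln C_2(\lambda)$, which is absorbable. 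Here the Yukawa smoothing is essential: the two derivatives from $\angles{D}^{-2}$ must beat the derivative losses coming from the null forms (one full derivative) and from Strichartz ($(\lambda_1\lambda_2)^{1/2}$), and one must be careful in the high-high-to-low output regime of the inner bilinear factor, where the convolution gain is only $\mu_{\text{inner}}^{-2}$ with $\mu_{\text{inner}}$ possibly small — this is exactly the place where the slightly lossy factor $\lambda_{\text{med}}^\delta$ appears.

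The final step is the dyadic summation. With $I(\lambda) \lesssim \lambda_{\text{med}}^\delta \, (\text{negative powers of }\lambda_{\text{max}}, \text{ratios})\prod_{j}\lambda_j^{0} \cdot \mu^{0}\cdots$ one feeds back the $H^s$ weights $\mu^s\prod_j\lambda_j^{-s}$ coming from the definition of $X^s_\pm$ and from the normalization of the $u_j$; since $s>0$, the positive power $\mu^{2s}$ on the output is controlled by the negative powers of the two largest input frequencies plus the $\angles{D}^{-2}$ gain, while the $\lambda_{\text{med}}^\delta$ loss is harmless for $\delta < s$. The off-diagonal cases ($\lambda_1\nsim\lambda_2$, etc.) are easier because the bilinear estimates then give $(\lambda_1\wedge\lambda_2)$ instead of $\mu(\mu/\lambda_1)^{1/2}$. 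I expect the main obstacle to be the bookkeeping in this last summation — tracking all frequency configurations (high-high-high, high-low-low, resonant vs.\ non-resonant in each bilinear factor) and verifying in each that the net exponent of every dyadic variable is strictly negative after inserting the $H^s$ weights — rather than any single hard estimate; the analytic heart (the null-form $L^2$ bounds and the $U^p/V^p$ interpolation) has already been isolated into Lemmas \ref{lmNullest}, \ref{lm-biest-strz} and \ref{lm-interpu2ub}.
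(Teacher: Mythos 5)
Your proposal mirrors the paper's proof step for step: duality (Theorem~\ref{thm-dual}, Proposition~\ref{prop-dual}) reduces the $U^2_\pm$ estimate on the Duhamel term to a frequency-localized quadrilinear integral, which is exactly Lemma~\ref{KeyLemma1}; that lemma is proved via the $Q_j/B_j$ null-structure decomposition \eqref{pp}--\eqref{b4}, the bilinear/null-form estimates of Corollary~\ref{lmNullestU2} and Lemma~\ref{lm-biest-strz}, and the $U^2$-versus-$U^p$ interpolation Lemma~\ref{lm-interpu2ub}; and Lemma~\ref{lemma-summing} performs the dyadic sum. Aside from a sign slip on the test-function space (it should be $V^2_{\epsilon_4}$, i.e.\ $V^2_\pm$, not $V^2_\mp$) and a conflation of $\mu$ (the intermediate convolution frequency) with $\lambda_4$ (the output frequency) in the final summation discussion, this is precisely the paper's argument.
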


\subsection{Reduction of Proposition \ref{Prop-WpEst} to dyadic quadrilinear estimates}
Due to time reversibility, we may assume that $\psi^\pm(t)=0$ for $t<0$. Let
$$
\Psi^\epsilon=V \ast \innerprod{\beta \psi^{\epsilon_1}}{\psi^{\epsilon_2}} \beta \psi^{\epsilon_3}.
$$
By definition of $U^2_\pm$, \eqref{Jpm}, Theorem \ref{thm-dual} and Proposition \ref{prop-dual}, we have
\begin{equation}
\label{Jm-X}
\begin{split}
  \norm{ P_{\lambda} J_{m,\pm}^\epsilon (\psi) }_{ U^2_{\pm } }
 &= \norm{ S_m(\mp t) P_{\lambda} J_{m,\pm}^\epsilon (\psi) }_{ U^2 } 
 \\
 &=\norm{  \Pi^\pm_m (D) P_\lambda \int_0^t S_m(\mp t')  \Psi^\epsilon(t') \, dt' }_{ U^2 }
 \\
 &=\sup_{ \norm{ \phi_{\lambda}}_{ V^2}=1}  \Big|  B\left(\Pi^\pm_m (D) P_\lambda \int_0^t S_m(\mp t')  \Psi^\epsilon (t') \, dt',  \phi   \right)
    \Big| 
 \\
 &=\sup_{ \norm{ \phi_{\lambda}}_{ V^2}=1}  \Big|\int_{\R^4}  \innerprod{ \Psi^\epsilon}{    S_m(\pm t)  P_\lambda  \phi^\pm} 
    \ dt dx\Big| 
 \\
 &=\sup_{ \norm{ \phi^\pm_{\lambda}}_{ V^2_{\pm }}=1}  \Big|\int_{\R^4}   V\ast \innerprod{\beta \psi^{\epsilon_1}}{\psi^{\epsilon_2}}   \innerprod{\beta \psi^{\epsilon_3}}{   \phi^\pm_\lambda} 
    \ dt dx\Big| ,
   \end{split}
\end{equation}
where Theorem \ref{thm-dual} and Proposition \ref{prop-dual} are used to obtain the third and fourth equality.
Hence
\begin{equation}
\label{Jm-X}
\begin{split}
 \norm{J_{m,\pm}^\epsilon(\psi)}^2_{X^s_\pm} &
 =\sum_{\lambda_4 \ge 1} \lambda_4^{2s} \norm{ P_{\lambda_4} J_{m,\pm}^\epsilon (\psi) }_{ U^2_{\pm } }^2 
\\ 
 &= \sum_{ \lambda_4 \ge 1 } \lambda_4^{2s} 
 \sup_{ \norm{ \phi^\pm_{\lambda_4}}_{ V^2_{\pm }}=1}  \Big|\int_{\R^4}  V\ast \innerprod{\beta \psi^{\epsilon_1}}{\psi^{\epsilon_2}}   \innerprod{\beta \psi^{\epsilon_3}}{   \phi^\pm_{\lambda_4}}
    \ dt dx\Big| ^2
 \\
 &\lesssim \sum_{ \lambda_4 \ge 1 } \lambda_4^{2s} 
 \sup_{ \norm{ \phi^\pm_{\lambda_4}}_{ V^2_{\pm }}=1} \left( \sum_{\lambda_1, \lambda_2, \lambda_3\ge 1}
 \Big|\int_{\R^4}  V\ast \innerprod{\beta \psi^{\epsilon_1}_{\lambda_1}}{\psi^{\epsilon_2}_{\lambda_2}}   \innerprod{\beta \psi^{\epsilon_3}_{\lambda_3}}{   \phi^\pm_{\lambda_4}} \ dt dx\Big|\right)^2
   \end{split}
\end{equation}
Set $\epsilon_4:=\pm$ and 
$$
I_m^\epsilon(\lambda) :=\Big|\int_{\R^4}  V\ast \innerprod{\beta \psi^{\epsilon_1}_{\lambda_1}}{\psi^{\epsilon_2}_{\lambda_2}}   \innerprod {\beta \psi^{\epsilon_3}_{\lambda_3}}{   \phi^{\epsilon_4}_{\lambda_4}} \ dt dx\Big|.
$$
Observe that if $\xi_j$ and  $\xi_4$  are the spatial Fourier variables for the functions $ \psi^{\epsilon_j}_{\lambda_j}$
and  $ \phi^{\epsilon_4}_{\lambda_4}$ one can see using Plancherel that the integral on the right vanishes unless 
$$
 \xi_1-\xi_2+ \xi_3-\xi_4=0.
$$
Consequently, for each $ j =1, \cdots , 4$ it follows from triangle inequality that the following conditions must be satisfied:
\begin{equation}
\label{lambcomp}
\lambda_j \le 3 \max \{\lambda_k: \  k\neq j , \ k=1, \cdots , 4 \}.
\end{equation}
Moreover, if $\xi_0$ is the frequency variable for $\innerprod{\beta \psi^{\epsilon_1}_{\lambda_1}}{\psi^{\epsilon_2}_{\lambda_2}} $ we have
\begin{equation*}
 \xi_0 =\xi_1-\xi_2=-\xi_3+\xi_4.
\end{equation*}
Thus, if $\xi_0$ has dyadic size $\mu$  it follows from triangle inequality that the following conditions must be satisfied:
\begin{equation}\label{mucomp}
\left\{
\begin{aligned}
\mu & \ll \lambda_1\sim \lambda_2\quad  \ \text{or}
\quad 
 \mu   \sim \lambda_1 \vee \lambda_2, 
 \\
\mu & \ll \lambda_3 \sim \lambda_4\quad  \ \text{or}
\quad 
 \mu   \sim \lambda_4 \vee \lambda_4.
\end{aligned}
\right.
\end{equation}

We denote the minimum,  median and maximum of $(\lambda_1, \lambda_2, \lambda_3)$ by $\lambda_{\mathrm{min}}$,
$\lambda_{\mathrm{med}}$ and $\lambda_{\mathrm{max}}$, respectively.
 \begin{lemma}\label{KeyLemma1}
Assume  $\lambda_j\ge 1$ and $\delta>0$. Then for all
$ \psi^{\pm}_{\lambda_j}\in U^2_{\pm} $ and $\phi^{\pm}_{\lambda_4} \in V^2_{\pm} $ we have 
\begin{align*}
 I_m^\epsilon(\lambda)  & \lesssim 
   \lambda_{\mathrm{med}}
^{\delta}  \prod_{j=1}^3  \norm{\psi^{\epsilon_j}_{\lambda_j} }_{U^2_{\epsilon_j}}  \norm{\phi^{\epsilon_4}_{\lambda_4} }_{V^2_{\epsilon_4}}.
   \end{align*}

 \end{lemma}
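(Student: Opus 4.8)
The plan is to estimate $I_m^\epsilon(\lambda)$ by first handling the convolution with the Yukawa potential, then splitting the spinor bilinear forms $\innerprod{\beta\psi^{\epsilon_1}_{\lambda_1}}{\psi^{\epsilon_2}_{\lambda_2}}$ and $\innerprod{\beta\psi^{\epsilon_3}_{\lambda_3}}{\phi^{\epsilon_4}_{\lambda_4}}$ into the null forms $Q_j$ and the smoother remainder terms $B_j$ via the decomposition \eqref{pp}. Note that $\psi^{\epsilon_j}_{\lambda_j}=\Pi^{\epsilon_j}_m(D)\psi^{\epsilon_j}_{\lambda_j}$, so the algebraic identities behind \eqref{pp} apply after inserting the projections; for the factor involving $\psi^{\epsilon_1}$ (which always carries a $+$ in \eqref{PFT}) one uses the relevant version, and the general-sign case is handled by the same computation with $\Pi^-_m(\eta)\Pi^{\epsilon}_m(\zeta)$ replaced by the appropriate product. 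Since $V\ast\,\cdot\,$ is, up to a constant, the Fourier multiplier $(1-\Delta)^{-1}$, convolution against the frequency-$\mu$ piece of a product contributes a gain of $\angles{\mu}_m^{-2}\sim\mu^{-2}$ for $\mu\ge 1$ (and is harmless for $\mu<1$, in which case the output frequency is $O(1)$). Thus
\begin{equation*}
 I_m^\epsilon(\lambda)\lesssim \sum_{\mu}\mu^{-2}\,\Bigabs{\int_{\R^4} P_\mu\!\innerprod{\beta\psi^{\epsilon_1}_{\lambda_1}}{\psi^{\epsilon_2}_{\lambda_2}}\cdot P_\mu\!\innerprod{\beta\psi^{\epsilon_3}_{\lambda_3}}{\phi^{\epsilon_4}_{\lambda_4}}\,dtdx},
\end{equation*}
where by \eqref{mucomp} the sum over $\mu$ runs over the compatible range.

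The core of the argument is to apply Cauchy–Schwarz in $(t,x)$ to the displayed integral, landing on $\norm{P_\mu Q(\psi^{\epsilon_1}_{\lambda_1},\psi^{\epsilon_2}_{\lambda_2})}\cdot\norm{P_\mu Q(\psi^{\epsilon_3}_{\lambda_3},\phi^{\epsilon_4}_{\lambda_4})}$ (and the analogous products where one or both factors are replaced by a $B$-term), and then to feed each factor into the null-form estimates. Here one must combine two kinds of inputs: when both relevant functions sit in $U^2_\pm$ one uses Corollary \ref{lmNullestU2} (respectively, the Strichartz-based estimate \eqref{BiestQU4}); when the fourth function $\phi^{\epsilon_4}_{\lambda_4}$ only lies in $V^2$, one uses the interpolation Lemma \ref{lm-interpu2ub} to pass from a pair of $U^2$/$U^p$ estimates to a $U^2$/$V^2$ estimate at the cost of a logarithmic factor $1+\ln(C_2(\lambda))$, which is absorbed by $\lambda_{\mathrm{med}}^\delta$. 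The bilinear estimates come in two regimes according to \eqref{mucomp}: the "high $\times$ high $\to$ low" regime $\mu\ll\lambda_1\sim\lambda_2$ and the "high $\times$ low" regime $\mu\sim\lambda_1\vee\lambda_2$, and similarly for the $(\lambda_3,\lambda_4)$ pair; these must be matched up against each other. Using Corollary \ref{lmNullestU2}, the null-form factor contributes at worst $\mu(\mu/\lambda_{\max})^{1/2}$ in the high–high case and $(\lambda_{\min}\wedge\mu)$ otherwise, while the $B$-terms contribute $(\lambda_1\lambda_2)^{1/2}/(\lambda_1\wedge\lambda_2)$ by \eqref{BiestB}; in every case the product of the two factors times $\mu^{-2}$, summed over the admissible $\mu$, is bounded by $C\,\lambda_{\mathrm{med}}^{\delta}\prod_{j=1}^3\norm{\psi^{\epsilon_j}_{\lambda_j}}_{U^2_{\epsilon_j}}\norm{\phi^{\epsilon_4}_{\lambda_4}}_{V^2_{\epsilon_4}}$.

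I would organize the bookkeeping by first reducing (by symmetry in the two factors and in $\lambda_1\leftrightarrow\lambda_2$, $\lambda_3\leftrightarrow\lambda_4$) to the case $\lambda_1\le\lambda_2$, $\lambda_3\le\lambda_4$, and then to a small number of configurations of $(\lambda_{\min},\lambda_{\mathrm{med}},\lambda_{\max},\mu)$ dictated by \eqref{lambcomp}–\eqref{mucomp}. In each configuration I put the null-form (or $B$-) estimate on the factor whose frequencies are comparable, apply Corollary \ref{lmNullestU2} or Lemma \ref{lm-biest-strz} to the other, and invoke Lemma \ref{lm-interpu2ub} to convert the $V^2$ norm of $\phi^{\epsilon_4}_{\lambda_4}$; then I sum the resulting geometric-type series in $\mu$ and in the smallest frequency. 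The main obstacle is the frequency-summation near the "balanced" cases where $\lambda_1\sim\lambda_2\sim\lambda_3\sim\lambda_4$ and $\mu$ is small: there the null-form gains alone give only $\mu^{3}/\lambda_{\max}$ per pair of factors and the $\mu^{-2}$ from the potential is not enough to close without using the extra decay; this is precisely where the $\lambda_{\mathrm{med}}^{\delta}$ loss (and the logarithm from interpolation) enters, and getting the powers of $\lambda$ to balance — rather than optimizing them, which the remark explicitly disclaims — is the delicate part of the estimate.
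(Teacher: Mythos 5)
Your overall scaffold matches the paper's: decompose $\innerprod{\beta\psi^{\epsilon_1}}{\psi^{\epsilon_2}}$ and $\innerprod{\beta\psi^{\epsilon_3}}{\phi^{\epsilon_4}}$ into null forms $Q_j$ and the smoother remainders $B_j$ via \eqref{pp}, Littlewood--Paley the Yukawa convolution to extract $\angles{\mu}^{-2}$, apply Cauchy--Schwarz, and feed the two factors into Corollary~\ref{lmNullestU2}, Lemma~\ref{lm-biest-strz}, and the interpolation Lemma~\ref{lm-interpu2ub} to handle the $V^2$ function. However, there is a genuine gap in the way you account for the null-form gain. You state that the null-form factor contributes at worst $\mu(\mu/\lambda_{\max})^{1/2}$ in the high--high regime, but that is only the $(++)$ bound in Corollary~\ref{lmNullestU2}\eqref{NullestU2++}; for the $(+-)$ interaction, Corollary~\ref{lmNullestU2}\eqref{NullestU2+-} gives just $\mu$ with no additional $(\mu/\lambda)^{1/2}$ gain. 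In the configuration $\lambda_1\le\lambda_2\ll\lambda_3\sim\lambda_4$ with $\epsilon_3=+$, $\epsilon_4=-$ (so $\mu\lesssim\lambda_2\ll\lambda_3$), the $U^2$-based bound for $I_1$ degrades to $\sum_{1\le\mu\lesssim\lambda_2}\angles{\mu}^{-2}\cdot\mu\cdot\mu\sim\log\lambda_2$, while the Strichartz-based $U^4$ bound yields $\sim\lambda_3\log\lambda_2$; Lemma~\ref{lm-interpu2ub} then produces a factor $\log\lambda_2\,(1+\log(\lambda_3\log\lambda_2))$, which contains $\log\lambda_3$ and is \emph{not} controlled by $\lambda_{\mathrm{med}}^\delta=\lambda_2^\delta$ since $\lambda_3$ can be arbitrarily large compared to $\lambda_2$.

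The paper closes this case by a separate modulation argument (Lemma~\ref{lm-BI-reson}), which your proposal does not address: one decomposes each of the four functions into low- and high-modulation pieces at threshold $\sim\lambda_3$, verifies via the resonance identity that the all-low contribution vanishes identically (because $\angles{\xi_1}_m-\epsilon_2\angles{\xi_2}_m+\angles{\xi_3}_m+\angles{\xi_4}_m\ge\lambda_3/2$ is incompatible with all modulations being $<\lambda_3/8$), and then uses the $\lambda_3^{-1/2}$ gain from the modulation estimate \eqref{modul2} in place of the missing null-form gain. Without this non-resonance step, your case bookkeeping cannot be made to close for $I_1$ and $I_2$ in the sub-case $\epsilon_4=-$, $\lambda_2\ll\lambda_3\sim\lambda_4$, and the proof as proposed has a real hole there (your identification of the balanced case $\lambda_1\sim\lambda_2\sim\lambda_3\sim\lambda_4$ as the delicate one is also misplaced: that regime is handled by the interpolation since the resulting logarithm is $\log\lambda_{\mathrm{med}}$, whereas the unbalanced regime above is where interpolation alone fails).
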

The proof of Lemma \ref{KeyLemma1} is given in Section \ref{secKeyLemma1}.

Now if we set
$
c_{j, \lambda_j}:= \norm{\psi^{\epsilon_j}_{\lambda_j} }_{U^2_{\epsilon_j}} 
$
  by definition $$ \norm{ \lambda_j^s c_{j, \lambda_j}  }_{ l^{2 }_{\lambda_j} }=\norm{\psi^{\epsilon_j} }_{X^s_{\epsilon_j}}.$$
Consequently, Proposition \ref{Prop-WpEst} follows from \eqref{Jm-X},  Lemma \ref{KeyLemma1} and the following Lemma.
\begin{lemma}\label{lemma-summing} 
Let $s> \delta>0$. Then for all $ c_{j, \lambda_j} \in l^{2 }_{\lambda_j}$ we have
  \begin{align*}
S:&=\sum_{ \lambda_4 \ge 1 }
  \left[ \sum_{  \lambda_1, \lambda_2, \lambda_3\ge 1
  }   \lambda_4^s  \lambda_{\mathrm{med}}
^{\delta}  \cdot
 c_{1, \lambda_1} c_{2, \lambda_2} c_{3, \lambda_3} \right]^2
 \\
 &\qquad \quad \lesssim \prod_{j=1}^3   \norm{  \lambda_j^{s} c_{j, \lambda_j}  }^2_{ l^{2 }_{\lambda_j} }.
 \end{align*}

\end{lemma}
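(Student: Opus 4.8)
The plan is to exploit the elementary but crucial observation that in the quadrilinear frequency interaction, by virtue of \eqref{lambcomp} the index $\lambda_4$ can only be comparable to the \emph{largest} of $\lambda_1,\lambda_2,\lambda_3$ up to a factor of $3$; and conversely each $\lambda_j$ ($j=1,2,3$) is bounded by $3$ times the max of the other three. In particular, $\lambda_{\mathrm{max}}\le 3(\lambda_4\vee\lambda_{\mathrm{max}})$ always holds, and if $\lambda_4<\lambda_{\mathrm{max}}$ then $\lambda_4\le 3\lambda_{\mathrm{max}}$ while if $\lambda_4\ge\lambda_{\mathrm{max}}$ then $\lambda_{\mathrm{max}}\le3\lambda_4$; in every case $\lambda_4\sim\lambda_{\mathrm{max}}$ or one of them dominates the other. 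The first step is therefore to dispose of the $\lambda_4$-sum: because the outer bracket is squared and carries weight $\lambda_4^s$, I would substitute the constraint $\lambda_4\lesssim\lambda_{\mathrm{max}}\sim\max(\lambda_1,\lambda_2,\lambda_3)$ and split into the two regimes $\lambda_4\lesssim \lambda_{\mathrm{max}}$ and $\lambda_4\gtrsim\lambda_{\mathrm{max}}$; but it is cleaner to simply note that for fixed $\lambda_1,\lambda_2,\lambda_3$ the inner summand is nonzero only for $\lambda_4\lesssim\lambda_{\mathrm{max}}$, and we keep $\lambda_4^s$ attached.

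Next, I would absorb the weights onto the $c_{j,\lambda_j}$. Write $\lambda_4^s\lesssim \lambda_{\mathrm{max}}^s=\lambda_1^s\vee\lambda_2^s\vee\lambda_3^s$, and distribute this among the three factors by, say, $\lambda_{\mathrm{max}}^s\le \lambda_1^s\lambda_2^s\lambda_3^s\cdot\lambda_{\mathrm{min}}^{-s}\lambda_{\mathrm{med}}^{-s}$ (using $\lambda_{\mathrm{max}}^s=(\lambda_1\lambda_2\lambda_3)^s/(\lambda_{\mathrm{min}}\lambda_{\mathrm{med}})^s$). Thus, setting $d_{j,\lambda_j}:=\lambda_j^s c_{j,\lambda_j}$, one reduces to showing
\begin{align*}
\sum_{\lambda_4\ge1}\left[\sum_{\lambda_1,\lambda_2,\lambda_3\ge1}\mathbbm{1}_{\{\lambda_4\lesssim\lambda_{\mathrm{max}}\}}\,\frac{\lambda_{\mathrm{med}}^{\delta}}{(\lambda_{\mathrm{min}}\lambda_{\mathrm{med}})^{s}}\,d_{1,\lambda_1}d_{2,\lambda_2}d_{3,\lambda_3}\right]^2\lesssim \prod_{j=1}^3\norm{d_{j,\lambda_j}}_{\ell^2}^2.
\end{align*}
Since $\delta<s$, the weight $\lambda_{\mathrm{med}}^{\delta-s}\lambda_{\mathrm{min}}^{-s}$ is summable in the two smaller indices: the point is that $\lambda_{\mathrm{min}}^{-s}$ gives geometric decay in the smallest variable, and $\lambda_{\mathrm{med}}^{\delta-s}$ gives geometric decay (with ratio $2^{\delta-s}<1$) in the median variable. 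I would then do the $\lambda_4$-sum first: for fixed $\lambda_1,\lambda_2,\lambda_3$ the number of admissible $\lambda_4$ is $\lesssim \log_2(\lambda_{\mathrm{max}})+O(1)$, which is harmless since it is absorbed by, e.g., shrinking $\delta$ slightly or by pairing it against the $\lambda_{\mathrm{min}}^{-s}$ decay; alternatively, and more robustly, after expanding the square via Cauchy–Schwarz in $\lambda_4$ one keeps the $\ell^2_{\lambda_4}$ structure intact. Concretely I would apply Cauchy–Schwarz in the $(\lambda_1,\lambda_2,\lambda_3)$ sum against the summable weight, turning the inner bracket-squared into $\big(\sum w_{\lambda}\big)\big(\sum w_{\lambda}^{-1}(\text{weight})^2 d_1^2 d_2^2 d_3^2\big)$ with $w_\lambda$ chosen so that both $\sum_{\lambda_1,\lambda_2,\lambda_3}w_\lambda<\infty$ and the residual weight is bounded; then sum in $\lambda_4$ using $\mathbbm{1}_{\{\lambda_4\lesssim\lambda_{\mathrm{max}}\}}$ and finally in $\lambda_1,\lambda_2,\lambda_3$.

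The main obstacle — and the one the Remark in the introduction flags — is precisely the presence of $\lambda_{\mathrm{med}}^{+\delta}$ rather than $\lambda_{\mathrm{med}}^{-\delta}$: it forces us to borrow $\delta$ worth of the $s$-regularity from the median frequency, which is why the hypothesis $s>\delta>0$ is sharp for this argument and why $s=0$ is excluded. The bookkeeping subtlety is that "median" is not a fixed slot — it depends on the ordering of $(\lambda_1,\lambda_2,\lambda_3)$ — so I would handle it by decomposing the sum into the $3!=6$ sectors according to the order of $\lambda_1,\lambda_2,\lambda_3$ (or, using symmetry of the statement under permutations of the first three indices, reduce to the single sector $\lambda_1\le\lambda_2\le\lambda_3$ and multiply by a constant), and within each sector the roles of min/med/max are fixed, so the weights $\lambda_{\mathrm{min}}^{-s}$, $\lambda_{\mathrm{med}}^{\delta-s}$ become honest geometric series in the first two summation variables with the largest variable $\lambda_3$ (hence also $\lambda_4\lesssim\lambda_3$) summed last against $\norm{d_{3,\lambda_3}}_{\ell^2}$. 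This is routine once the slots are fixed; no estimate beyond Cauchy–Schwarz and summation of geometric series is needed.
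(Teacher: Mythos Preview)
Your overall plan---symmetry reduction to $\lambda_1\le\lambda_2\le\lambda_3$, then Cauchy--Schwarz with the weights $\lambda_{\min}^{-s}$ and $\lambda_{\med}^{\delta-s}$---is the same as the paper's. But your concrete execution has a real gap: after substituting $\lambda_4^s\lesssim\lambda_{\max}^s$ you only retain the one-sided constraint $\mathbbm{1}_{\{\lambda_4\lesssim\lambda_{\max}\}}$ and then declare the remaining sum ``routine''. It is not. With only $\lambda_4\lesssim\lambda_3$ in the ordered sector, after summing out $\lambda_1$ and $\lambda_2$ you are left with $\sum_{\lambda_4}\bigl[\sum_{\lambda_3\gtrsim\lambda_4}d_{3,\lambda_3}\bigr]^2$, which is \emph{not} controlled by $\norm{d_3}_{\ell^2}^2$: take $d_{3,2^k}=N^{-1/2}$ for $1\le k\le N$ and zero otherwise, so $\norm{d_3}_{\ell^2}=1$, yet the displayed sum is $\sim N^2$. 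Your remark that the $\lambda_4$-sum contributes only a $\log\lambda_{\max}$ factor does not address this---the problem is the unrestricted $\lambda_3$-sum for each fixed $\lambda_4$, not the number of $\lambda_4$'s.

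What is missing is the \emph{other half} of \eqref{lambcomp}, namely $\lambda_3\le 3\max(\lambda_1,\lambda_2,\lambda_4)$, which in the ordered sector forces the dichotomy $\lambda_3\sim\lambda_4$ or $\lambda_3\sim\lambda_2$. This is precisely the case split the paper performs (its Cases~1--3). In the regime $\lambda_3\sim\lambda_4$ your argument works as written; in the regime $\lambda_4\ll\lambda_3\sim\lambda_2$ the $\lambda_2$- and $\lambda_3$-sums are coupled, and one uses $\lambda_2\sim\lambda_3\gtrsim\lambda_4$ to extract an extra factor $\lambda_4^{\delta-s}$ (or, in the paper's version which keeps $\lambda_4^s$ rather than bounding it by $\lambda_3^s$, a factor $\lambda_4^{\delta-2s}$), which then makes the $\lambda_4$-sum convergent. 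You acknowledge the reverse constraint in your first paragraph but never invoke it in the actual estimate; once you do, the argument closes, and indeed becomes essentially the paper's proof.
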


\subsection{Proof of Lemma \ref{lemma-summing} }
Without loss of generality one could assume $\lambda_1 \le \lambda_2 \le \lambda_3$.
We deal with the cases $\lambda_4 \sim \lambda_3$, $\lambda_4\gg \lambda_3$ and  $\lambda_4 \ll \lambda_3$, separately.

\subsubsection{Case 1: $\lambda_4\sim \lambda_3$
} In this case we have
\begin{align*}
S&\lesssim \sum_{ \lambda_4 \ge 1 }
  \left[ \sum_{  \lambda_1, \lambda_2, \lambda_3\ge 1
  }   \lambda_4^s (\lambda_1\lambda_2)
^{\delta}  \cdot
 c_{1, \lambda_1} c_{2, \lambda_2} c_{3, \lambda_3}\right]^2
 \\
 &\lesssim 
 \norm{ \lambda_1^s c_{1, \lambda_1}  }^2_{ l^{2 }_{\lambda_1} } \norm{ \lambda_2^s c_{2, \lambda_2}  }^2_{ l^{2 }_{\lambda_2} } 
 \sum_{ \lambda_4 \ge 1 }
  \left[ \sum_{  \lambda_3\sim \lambda_4
  }  \lambda_4^s c_{3, \lambda_3} \right]^2
  \\
 &\lesssim \prod_{j=1}^3   \norm{ \lambda_j^s c_{j, \lambda_j}  }^2_{ l^{2 }_{\lambda_j} },
 \end{align*}
where to obtain the second inequality, we used Cauchy-Schwarz in $\lambda_1$ 
and in $\lambda_2$, and the fact that $ \sum_{ \lambda_j\ge 1}  \lambda_j^{-2(s-\delta)} \lesssim 1$,
 since $s>\delta$.
\subsubsection{Case 2: $\lambda_4 \gg \lambda_3$} We further divide this case into
 $\lambda_1\ll \lambda_2$ and $\lambda_1\sim \lambda_2$ 
Assume first $\lambda_1\ll \lambda_2$. Then in view of \eqref{lambcomp} we have $\lambda_4 \sim \lambda_2$. Now we can use Cauchy-Schwarz in $\lambda_1$ 
and in $\lambda_3$ to obtain
\begin{align*}
S&\lesssim \sum_{ \lambda_4 \ge 1 }
  \left[ \sum_{  \lambda_1, \lambda_2, \lambda_3\ge 1
  }   \lambda_4^s (\lambda_1\lambda_3)
^{\delta}  \cdot
 c_{1, \lambda_1} c_{2, \lambda_2} c_{3, \lambda_3}\right]^2
 \\
 &\lesssim 
 \norm{ \lambda_1^s c_{1, \lambda_1}  }^2_{ l^{2 }_{\lambda_1} } \norm{ \lambda_2^s c_{3, \lambda_3}  }^2_{ l^{2 }_{\lambda_3} } 
 \sum_{ \lambda_4 \ge 1 }
  \left[ \sum_{  \lambda_2\sim \lambda_4
  }  \lambda_4^s c_{2, \lambda_2} \right]^2
  \\
 &\lesssim \prod_{j=1}^3   \norm{ \lambda_j^s c_{j, \lambda_j}  }^2_{ l^{2 }_{\lambda_j} },
 \end{align*}

Next assume $\lambda_1\sim \lambda_2$.
 In view of \eqref{lambcomp} we have $\lambda_4 \lesssim  \lambda_2$. 
Then we apply Cauchy-Schwarz in $\lambda_1\sim \lambda_2$ 
and in $\lambda_3$ to obtain
\begin{align*}
S&\lesssim \sum_{ \lambda_4 \ge 1 }
  \left[ \sum_{  \lambda_1, \lambda_2, \lambda_3\ge 1
  }   \lambda_4^s (\lambda_1\lambda_3)
^{\delta}  \cdot
 c_{1, \lambda_1} c_{2, \lambda_2} c_{3, \lambda_3}\right]^2
 \\
 &\lesssim \left[
 \sum_{ \lambda_4 \ge 1 }
  \lambda_4^{2s} \lambda_4^{2(\delta-2s}) \right]\prod_{j=1}^3   \norm{ \lambda_j^s c_{j, \lambda_j}  }^2_{ l^{2 }_{\lambda_j} }
  \\
 &\lesssim \prod_{j=1}^3   \norm{ \lambda_j^s c_{j, \lambda_j}  }^2_{ l^{2 }_{\lambda_j} },
 \end{align*}

 \subsubsection{Case 3:  $\lambda_4\ll \lambda_3$} As above 
 we further divide this case into
 $\lambda_1\ll \lambda_2$ and $\lambda_1\sim \lambda_2$.
 Assume first $\lambda_1\ll \lambda_2$.  In view of \eqref{lambcomp} we have $\lambda_2\sim \lambda_3$. Then applying Cauchy-Schwarz first in $\lambda_1$ 
and then in $\lambda_2\sim \lambda_3$ we obtain
\begin{align*}
S&\lesssim \sum_{ \lambda_4 \ge 1 }
  \left[ \sum_{  \lambda_1, \lambda_2, \lambda_3\ge 1
  }   \lambda_4^s (\lambda_1\lambda_2)
^{\delta}  \cdot
 c_{1, \lambda_1} c_{2, \lambda_2} c_{3, \lambda_3}\right]^2
 \\
 &\lesssim \left[
 \sum_{ \lambda_4 \ge 1 }
  \lambda_4^{2s} \lambda_4^{2(\delta-2s}) \right]\prod_{j=1}^3   \norm{ \lambda_j^s c_{j, \lambda_j}  }^2_{ l^{2 }_{\lambda_j} }
  \\
 &\lesssim \prod_{j=1}^3   \norm{ \lambda_j^s c_{j, \lambda_j}  }^2_{ l^{2 }_{\lambda_j} }.
 \end{align*}

Assume next $\lambda_1\sim \lambda_2$.  By \eqref{lambcomp} we have $\lambda_3\lesssim \lambda_2$. Applying Cauchy-Schwarz first in $\lambda_1\sim \lambda_2$ and then in $\lambda_3$ we obtain
\begin{align*}
S&\lesssim \sum_{ \lambda_4 \ge 1 }
  \left[ \sum_{  \lambda_1, \lambda_2, \lambda_3\ge 1
  }   \lambda_4^s (\lambda_1\lambda_3)
^{\delta}  \cdot
 c_{1, \lambda_1} c_{2, \lambda_2} c_{3, \lambda_3}\right]^2
 \\
 &\lesssim \left[
 \sum_{ \lambda_4 \ge 1 }
  \lambda_4^{2s} \lambda_4^{2(\delta-2s}) \right]\prod_{j=1}^3   \norm{ \lambda_j^s c_{j, \lambda_j}  }^2_{ l^{2 }_{\lambda_j} }
  \\
 &\lesssim \prod_{j=1}^3   \norm{ \lambda_j^s c_{j, \lambda_j}  }^2_{ l^{2 }_{\lambda_j} }.
 \end{align*}

\subsection{Proof of Theorem \ref{mainthm1}
}
We solve the integral equation \eqref{IntegralEq} by contraction mapping techniques as follows.  
Define the mapping
\begin{equation}\label{contrmap}
\psi^\pm(t) = \mathcal T(\psi^\pm ) (t):=S_m (\pm t) \psi_0^\pm  + i J_{m,\pm} (\psi)(t).
\end{equation}
We look for the solution in the set \[ D_\delta = \left\{ \psi^\pm \in X^s_\pm : \ \
\| \psi^\pm\|_{X^s_\pm} \leq \delta \right\}. \]
For $\psi^\pm \in D_\delta$ and initial data of size  $\norm{\psi_0^\pm }_{H^s}\le \varepsilon\ll \delta$, 
we have by Proposition \ref{Prop-WpEst}
$$
 \| \mathcal T(\psi^\pm)\|_{ X^s_\pm} \lesssim \varepsilon + \delta^3 \leq \delta 
$$
for small enough $\delta$. Moreover, for solutions $\psi^\pm$ and $\phi^\pm$ with the same data, 
one can show the difference estimate
\[\begin{aligned} \|\mathcal T(\psi^\pm)-\mathcal T(\phi^\pm)\|_{X^s_\pm} &\lesssim \left(\| \psi^\pm\|_{X^s_\pm}+\|\phi^\pm\|_{ X^s_\pm} \right)^2\|\psi^\pm-\phi^\pm\|_{X^s_\pm} \\ 
&\lesssim \delta^2 \|\psi^\pm-\phi^\pm\|_{X^s_\pm} \end{aligned}\]
whenever $\psi^\pm,  \ \phi^\pm \in D_\delta$.
Hence $\mathcal T $ is a contraction on $D_\delta$ when $\delta \ll 1$, 
which implies the existence of a unique fixed point 
in $D_\delta$ solving the integral equation \eqref{contrmap}.

It thus remains to show scattering of solution of \eqref{contrmap} to a free solution
as $t\rightarrow \infty$. By Proposition \ref{Prop-Vp} and Proposition \ref{Prop-WpEst}, 
we have for each $\mu$
$$
 S_m(\mp t) P_\mu  J_{m,\pm}(\psi) \in V^2_{-, \text{rc}}
$$
and hence the limit as $t\rightarrow \infty$ exists for each $\mu$. Combining this with 
$$
\sum_{\mu\ge 1} \mu^{2s} \norm{ P_\mu  J_{m,\pm}  (\psi)}_{ V^2}^2 \lesssim 1
$$
gives
$$
\lim_{t\rightarrow \infty} S_m(\mp t) P_\mu J_{m,\pm}(\psi):= f_\pm  \in H^s.
$$
Hence for the solution $\psi^\pm $ we have 
$$
\norm {S_m(\pm t) f_\pm -\psi^\pm(t)}_{ H^s} \rightarrow 0  \ \ \text{as} \ t\rightarrow \infty.
$$

%%%%%%%%%%%%%%%%%%%%%%%%%%%%%%%%%%%%%%%%%

%%%%%%%%%%%%%%%%%%%%%%%%%%%%%%%%%%%%%%%%%
\section{Proof of Lemma \ref{KeyLemma1}}\label{secKeyLemma1}
We use the notation
$$
\psi_1:=\psi^{\epsilon_1}_{ \lambda_1}, \quad  \psi_2:=\psi^{\epsilon_2}_{ \lambda_2}, \quad \psi_3:=\psi^{\epsilon_3}_{ \lambda_3},  \quad \psi_4:=\phi^{\epsilon_4}_{ \lambda_4}.
$$
By symmetry we may set
$\epsilon_1=\epsilon_3=+$ in the integral for $I^\epsilon_m$, and thus 
we need to estimate
 $$
I(\lambda):=I^\epsilon_m(\lambda)=  \Big|\int_{\R^{4}}   V\ast \innerprod{\beta \psi_1}{\psi_2}   \cdot \innerprod{\beta \psi_3}{ \psi_4}
  \ dt dx\Big|
$$
with $\epsilon_1=\epsilon_3=+$.

Let $Q$ denote any one of 
$Q_l$'s for $l=1, \cdots, 3$ and $B$ denote any one of $B_l$'s for $l=1, \cdots, 4$.
In view of the equations in \eqref{pp}--\eqref{qb}, it suffices to show for $\epsilon_1=\epsilon_3=+$ the estimates
\begin{equation}\label{keyest}
  I_k(\lambda) \lesssim   \lambda_{\mathrm{med}}
^{\delta} \prod_{j=1}^3  \norm{\psi_j }_{U^2_{\epsilon_j}}  \norm{\psi_4 }_{V^2_{\epsilon_4}} \quad ( k=1, \cdots, 4),
\end{equation}
where
\begin{align*}
I_1(\lambda)&=\Big| \int_{\R^{4}}   V\ast Q( \psi_1, \psi_2)   \cdot Q ( \psi_3,   \psi_4)
  \ dt dx \Big| ,
  \\
I_2 (\lambda)&= \Big| \int_{\R^{4}}    V\ast B(  \psi_1, \psi_2)    \cdot  Q(\psi_3,   \psi_4 )    \ dt dx \Big| ,
\\
I_3(\lambda)&= \Big| \int_{\R^{4}}    V\ast Q(  \psi_1, \psi_2)    \cdot  B(\psi_3,   \psi_4)   \ dt dx \Big|,
\\
I_4(\lambda)&= \Big| \int_{\R^{4}}   V\ast B( \psi_1, \psi_2)    \cdot B( \psi_3,   \psi_4)   \ dt dx \Big|.
\end{align*}

%%%%%%%%%%%%%%%%%%%%%%%%%%%%%%%%%%%%%%%%%

In the arguments that follow we repeatedly use the following facts (see Propositions \ref{Prop-Up}  and \ref{Prop-Vp}): 
\begin{equation}\label{ubv2emb}
 U^2_\pm \subset U^p_\pm , \quad V^2_\pm \subset U^p_\pm  \quad \text{for} \ \ p>2.
\end{equation}
We shall also use the conditions in \eqref{mucomp}.
We remark that in $\R^3$ convolution with $V(x)=e^{-|x|}/|x|$ is (up to a multiplicative constant) the Fourier-multiplier $\angles{D}^{-2}$ 
with symbol $\angles{\xi}^{-2}$.

%%%%%%%%%%%%%%%%%%%%%%%%%%%%%%%%%%%%%%%%%

\subsection{Estimate for $I_4(\lambda)$}
By the symmetry of our argument we may assume $\lambda_1 \le \lambda_2$ and  $\lambda_3 \le \lambda_4$. 
Using Littlewood-Paley decomposition,  H\"{o}lder and  the bilinear estimate\eqref{BiestB}, we obtain
\begin{align*}
  I_4(\lambda) & \lesssim  \sum_{\mu\ge 1 }  \norm{\angles{D}^{-2}    P_\mu   B( \psi_1, \psi_2 )}  \norm{  P_\mu  B(\psi_3 , \psi_4)  } 
\\
  &
  \lesssim  \sum_{\mu\ge 1 } \angles{\mu}^{-2} (\lambda_1 \lambda_3)^{-\frac12}(\lambda_2\lambda_4)^{\frac12}
\prod_{j=1}^4 \norm{\psi_j}_{V^2_{\epsilon_j}}  
\\
&\lesssim \prod_{j=1}^3  \norm{\psi_j}_{U^2_{\epsilon_j}}  \norm{\psi_4 }_{V^2_{\epsilon_4}}
   \end{align*}
    where to sum up the third line we considered the following cases: $\lambda_1\sim \lambda_2$ or $\lambda_1\ll \lambda_2\sim \mu $ and $\lambda_3\sim \lambda_4$ or $\lambda_3\ll \lambda_4 \sim \mu $.

    %%%%%%%%%%%%%%%%%%%%%%%%%%%%%%%%%%%%%%%%%

\subsection{Estimate for $I_3(\lambda)$}
As in the previous subsection we may assume $\lambda_3 \le \lambda_4$. Then
using Littlewood-Paley decomposition,  H\"{o}lder,  the null-form estimates in Corollary \ref{lmNullestU2} and the bilinear estimate \eqref{BiestB}, we obtain
\begin{align*}
  I_3(\lambda) & \lesssim  \sum_{\mu\ge 1 }\norm{\angles{D}^{-2}    P_\mu   Q( \psi_1, \psi_2 )}   \norm{  P_\mu  B(\psi_3 , \psi_4)  }
\\
  & \lesssim \sum_{\mu\ge 1 } \angles{\mu}^{-2} \mu\lambda_3^{-\frac12}  \lambda_4^{\frac12}\prod_{j=1}^2  \norm{\psi_j}_{U^2_{\epsilon_j}} \prod_{j=3}^4 \norm{\psi_j}_{V^2_{\epsilon_j}}  
  \\
  & \lesssim \prod_{j=1}^3  \norm{\psi_j}_{U^2_{\epsilon_j}}  \norm{\psi_4 }_{V^2_{\epsilon_4}}
   \end{align*}
where to sum up  the second line we considered the cases $\lambda_3\sim \lambda_4$ or $\lambda_3\ll \lambda_4 \sim \mu $.

%%%%%%%%%%%%%%%%%%%%%%%%%%%%%%%%%%%%%%%%%%%%%

\subsection{Estimate for $I_2(\lambda)$}
 By the symmetry of our argument we may assume $\lambda_1 \le\lambda_2$ and $\lambda_3 \le\lambda_4$.

 \subsubsection{ \textbf{Case} $\lambda_3 \ll \lambda_4\sim \mu $}  As in the preceding subsections we
 use H\"{o}lder and the bilinear estimates \eqref{BiestQV2} and \eqref{BiestB} to obtain
\begin{align*}
I_{2}(\lambda)
&\lesssim \sum_{\mu\ge 1 }\norm{\angles{D}^{-2}    P_\mu   B( \psi_1, \psi_2 )}\norm{  P_\mu  Q ( \psi_3,\psi_4 ) }
\\
  &
  \lesssim  \sum_{\mu\ge 1 } \angles{\mu}^{-2}  \lambda_1^{-\frac12}(\lambda_2\lambda_3\lambda_4)^{\frac12}
\prod_{j=1}^4 \norm{\psi_j}_{V^2_{\epsilon_j}} 
\\
&\lesssim \prod_{j=1}^3  \norm{\psi_j}_{U^2_{\epsilon_j}}  \norm{\psi_4 }_{V^2_{\epsilon_4}},
   \end{align*}
where to sum up the second line we considered the cases $\lambda_1\sim \lambda_2$ or $\lambda_1\ll \lambda_2\sim \mu $.
 %**************************************************************************  

\subsubsection{\textbf{Case} $\lambda_3 \sim \lambda_4$}  
\subsubsection*{\textbf{Sub-case 1:} $\lambda_2 \gtrsim \lambda_3\sim \lambda_4$}   
 Then by H\"{o}lder,  the bilinear estimate \eqref{BiestB} and the null form estimates in Corollary \ref{lmNullestU2} we obtain
 \begin{align*}
I_{2}(\lambda)
&\lesssim \sum_{\mu\ge 1 }\norm{\angles{D}^{-2}    P_\mu   B( \psi_1, \psi_2 )}\norm{  P_\mu  Q ( \psi_3,\psi_4 ) }
\\
  &\lesssim  \sum_{\mu\ge 1 } \angles{\mu}^{-2}\mu \lambda_1^{-\frac12}   \lambda_2^{\frac12}  \prod_{j=1}^2 \norm{\psi_j }_{V^2_{\epsilon_j}} \prod_{j=3}^4 \norm{\psi_j }_{U^2_{\epsilon_j}}  
  \\
  &\lesssim  \prod_{j=1}^3  \norm{\psi_j }_{U^2_{\epsilon_j}}  \norm{\psi_4 }_{U^2_{\epsilon_4}},
   \end{align*}
   where we used $\lambda_1 \sim \lambda_2$ or $\lambda_1 \ll \lambda_2\sim \mu$ to sum up the second line.

On the other hand,  applying H\"{o}lder and the bilinear estimates \eqref{BiestQU4} and \eqref{BiestB} we obtain
\begin{align*}
I_{2} (\lambda)
  &\lesssim \sum_{\mu\ge 1 } \angles{\mu}^{-2} \lambda_1^{-\frac12}   (\lambda_2 \lambda_3\lambda_4)^{\frac12}   \prod_{j=1}^2 \norm{\psi_j }_{V^2_{\epsilon_j}}   \prod_{j=3}^4 \norm{\psi_j }_{U^4_{\epsilon_j}}     
  \\
  &\lesssim  \lambda_3 \prod_{j=1}^3 \norm{\psi_j }_{U^2_{\epsilon_j}}  \norm{\psi_4 }_{U^4_{\epsilon_4}},
   \end{align*}
    where we used $\lambda_1 \sim \lambda_2$ or $\lambda_1 \ll \lambda_2\sim \mu$ to sum up the first line.

  Now we use Lemma \ref{lm-interpu2ub} to interpolate between 
   the two estimates for $I_2(\lambda)$ above and obtain
\begin{align*}
  I_2(\lambda) \lesssim  \lambda_3^{\delta}  \prod_{j=1}^3 \norm{\psi_j}_{U^2_{\epsilon_j}}  \norm{\psi_4 }_{V^2_{\epsilon_4}}.
   \end{align*}

 %**************************************************************************  

   \subsubsection*{\textbf{Sub-case 2:} $\lambda_2 \ll\lambda_3\sim \lambda_4$}  
Since by assumption $\lambda_1\le \lambda_2$ we have $\mu\ll \lambda_3\sim \lambda_4.$  We separate this sub-case further into (i): $\epsilon_4=+$ and  (ii): $\epsilon_4=-$. Recall that $\epsilon_3=+$.
   \subsubsection*{ (i):  $\epsilon_4=+$}  
 By H\"{o}lder , the bilinear estimate \eqref{BiestB} and the null form estimate in Corollary \ref{lmNullestU2}\eqref{Nullest++} we obtain
\begin{align*}
I_{2}(\lambda)
&\lesssim \sum_{\mu\ge 1 }\norm{\angles{D}^{-2}    P_\mu   B( \psi_1, \psi_2 )}\norm{  P_\mu  Q ( \psi_3,\psi_4 ) }
\\
  &\lesssim  \sum_{\mu\ge 1 } \angles{\mu}^{-2}   \lambda_1^{-\frac12}   \lambda_2^{\frac12} \cdot  \mu^\frac32 \lambda_3^{-\frac12} \prod_{j=1}^3  \norm{\psi_j}_{V^2_{\epsilon_j}}   \prod_{j=3}^4 \norm{\psi_j}_{U^2_{\epsilon_j}}
  \\
  &\lesssim  \lambda_3^{-\frac12 }  \prod_{j=1}^3  \norm{\psi_j }_{U^2_{\epsilon_j}}  \norm{\psi_4 }_{U^2_{\epsilon_4}},
   \end{align*}
 where we used $\lambda_1 \sim \lambda_2$ or $\lambda_1 \ll \lambda_2\sim \mu$ to sum up the second line.
 
On the other hand, similarly as in Sub-case 1 above we have
\begin{align*}
I_{2}(\lambda)
  &\lesssim  \lambda_3 \prod_{j=1}^3 \norm{\psi_j }_{U^2_{\epsilon_j}}  \norm{\psi_4 }_{U^4_{\epsilon_4}}.
   \end{align*}
 Then we use Lemma \ref{lm-interpu2ub} to interpolate between 
   the two estimates for $I_2$ above and obtain
   \begin{align*}
 I_2(\lambda) \lesssim \prod_{j=1}^3  \norm{\psi_j }_{U^2_{\epsilon_j}}  \norm{\psi_4 }_{V^2_{\epsilon_4}}.
   \end{align*}

   \subsubsection*{(ii):  $\epsilon_4=-$}  This case is contained in Lemma \ref{lm-BI-reson} below.

%'**********************************************************************************
\subsection{Estimate for $I_1(\lambda)$}
  By the symmetry of our argument we may assume $\lambda_1 \le\lambda_2$ and  $\lambda_3 \le\lambda_4$.

 %********************************************************************************

 \subsubsection{\textbf{Case}  $\lambda_3 \ll \lambda_4 \sim \mu$}
By H\"{o}lder, \eqref{BiestB} and the null form estimates in Corollary \ref{lmNullestU2} we obtain
\begin{align*}
I_{1}(\lambda)& \lesssim \sum_{\mu\ge 1 }\norm{\angles{D}^{-2}    P_\mu   Q( \psi_1, \psi_2 )}  \norm{  P_\mu  Q(\psi_3 , \psi_4)  }
\\
&\lesssim   \sum_{\mu\sim \lambda  } \angles{\mu}^{-2} \mu \lambda_3^{\frac12} \lambda_4^{\frac12} 
\prod_{j=1}^4  \norm{\psi_j}_{U^2_{\epsilon_j}}  \prod_{j=4}^3 \norm{\psi_j}_{V^2_{\epsilon_j}}  
\\
&\lesssim  
\prod_{j=1}^3  \norm{\psi_j }_{U^2_{\epsilon_j}}  \norm{\psi_4 }_{V^2_{\epsilon_4}} .  
   \end{align*}

%********************************************************************************

\subsubsection{\textbf{Case} $\lambda_3 \sim \lambda_4 \gtrsim \mu $}  
\subsubsection*{\textbf{ Sub-case 1:}  $\lambda_2 \gtrsim \lambda_3\sim \lambda_4$}  
   By H\"{o}lder  and the null form estimates in Corollary \ref{lmNullestU2} we obtain
\begin{align*}
            I_{1}(\lambda)  
            &\lesssim  \sum_{\mu\ge 1 }  \angles{\mu}^{-2}\norm{  P_\mu  Q (  \psi_1, \psi_2)   } \norm{  P_\mu  Q(\psi_3,   \psi_4)    }
           \\
            &\lesssim  \sum_{1\le \mu\lesssim \lambda_3 }  \angles{\mu}^{-2} 
            \mu^2 \prod_{j=1}^4  \norm{\psi_j }_{U^2_{\epsilon_j}} 
            \\
            &\lesssim \ln (\lambda_3) \prod_{j=1}^3  \norm{\psi_j }_{U^2_{\epsilon_j}}  \norm{\psi_4 }_{U^2_{\epsilon_4}}.
   \end{align*}

On the other hand,  by H\"{o}lder, \eqref{BiestQU4} and the null form estimates in Corollary \ref{lmNullestU2} we have
\begin{align*}
  I_{1}(\lambda)  &\lesssim  \sum_{\mu\ge 1 }  \norm{ \angles{D}^{-2} P_\mu  Q (  \psi_1, \psi_2)   }  \norm{  P_\mu  Q(\psi_3,   \psi_4)    }        
\\
  &\lesssim  \sum_{\mu\ge 1 }  \angles{\mu}^{-2}\mu ( \lambda_3\lambda_4)^\frac12
\prod_{j=1}^2  \norm{\psi_j}_{U^2_{\epsilon_j}}  \prod_{j=3}^4    \norm{\psi_j}_{U^4_{\epsilon_j}} 
\\
  &\lesssim \lambda_3
\prod_{j=1}^3  \norm{\psi_j}_{U^2_{\epsilon_j}}  \norm{\psi_4 }_{U^4_{\epsilon_4}}.
   \end{align*}
  
  Then we interpolate the two estimates for $I_1(\lambda)$ above, using Lemma \ref{lm-interpu2ub}, and obtain
      $$
      I_{1} (\lambda)
            \lesssim  \lambda_3^{\delta}
\prod_{j=1}^3  \norm{\psi_j }_{U^2_{\epsilon_j}}  \norm{\psi_4 }_{V^2_{\epsilon_4}} .
       $$

%********************************************************************************

   \subsubsection*{\textbf{Sub-case 2:} $\lambda_2 \ll\lambda_3\sim \lambda_4$}  
Since by assumption $\lambda_1\le \lambda_2$ we have $\mu\ll \lambda_3\sim \lambda_4.$  We separate this sub-case further into (i): $\epsilon_4=+$ and  (ii): $\epsilon_4=-$.  Recall that $\epsilon_3=+$.
   
   \subsubsection*{ (i):  $\epsilon_4=+$}  
By H\"{o}lder and the null form estimates in Corollary \ref{lmNullestU2}
   \begin{align*}
    I_{1}(\lambda)  &\lesssim  \sum_{\mu\ge 1 }  \norm{ \angles{D}^{-2} P_\mu  Q (  \psi_1, \psi_2)   }  \norm{  P_\mu  Q(\psi_3,   \psi_4)    }
            \\
            &\lesssim \sum_{1\le \mu\lesssim  \lambda_2 }  \angles{\mu}^{-2}   \mu^\frac52 \lambda_3^{-\frac12} \prod_{j=1}^4  \norm{\psi_j }_{U^2_{\epsilon_j}} 
              \\
            &\lesssim  \lambda_2^\frac12 \lambda_3^{-\frac12} \prod_{j=1}^3  \norm{\psi_j }_{U^2_{\epsilon_j}}  \norm{\psi_4}_{U^2_{\epsilon_4}}.
   \end{align*}

   On the other hand, by H\"{o}lder and \eqref{BiestQU4} we have
\begin{align*}
I_{1}(\lambda)& \lesssim \sum_{\mu\ge 1 }\norm{\angles{D}^{-2}    P_\mu   Q( \psi_1, \psi_2 )}   \norm{  P_\mu  Q(\psi_3 , \psi_4)  }
\\
&\lesssim  \sum_{\mu\ge 1 } \angles{\mu}^{-2} (\lambda_1\lambda_2 \lambda_3\lambda_4)^\frac12  \prod_{j=1}^4  \norm{\psi_j }_{U^4_{\epsilon_j}} 
\\
&\lesssim    \lambda_2 \lambda_3
\prod_{j=1}^3  \norm{\psi_j }_{U^2_{\epsilon_j}}  \norm{\psi_4}_{U^4_{\epsilon_4}}
   \end{align*}

We then use Lemma \ref{lm-interpu2ub} to interpolate between 
   the two estimates for $I_1(\lambda)$ above and obtain
\begin{align*}
I_{1}(\lambda)
&\lesssim  \lambda_2^{\delta}
\prod_{j=1}^3  \norm{\psi_j}_{U^2_{\epsilon_j}}  \norm{\psi_4}_{V^2_{\epsilon_4}}.
   \end{align*}

   \subsubsection*{ (ii):  $\epsilon_4=-$}  
This case is contained in Lemma \ref{lm-BI-reson} below.

%********************************************************************************

%********************************************************************************

\subsection{A modulation Lemma}
Recall
$$
\psi_1=\psi^{\epsilon_1}_{ \lambda_1}, \quad  \psi_2=\psi^{\epsilon_2}_{ \lambda_2}, \quad \psi_3=\psi^{\epsilon_3}_{ \lambda_3},  \quad \psi_4=\phi^{\epsilon_4}_{ \lambda_4},
$$
where $\epsilon_j\in \{+, -\}$ and $\epsilon_1=\epsilon_3=+$.

In the case $\epsilon_4=-$ and $\lambda_1\le \lambda_2\ll \lambda_3\sim \lambda_4$ we exploit the non-resonance structure in the integral for $I_k(\lambda)$ to establish the required estimates for $I_1(\lambda)$ and $I_2(\lambda)$ (see Subsections 6.3.2(ii) and 6.4.2(ii) above). This is contained in the following Lemma.
\begin{lemma}\label{lm-BI-reson}
Let 
$$
J(\lambda)=\Bigabs{
 \int_{\R^{4}}   V\ast A( \psi_1, \psi_2)   \cdot  Q ( \psi_3,   \psi_4 )
  \ dt dx},
$$
where $A$ is either $Q$ or $B$. Assume $\epsilon_1=\epsilon_3=+,\  \epsilon_4=-$
 and $\lambda_1\le \lambda_2\ll \lambda_3\sim \lambda_4$. Then
 \begin{equation}\label{J-resonest}
J(\lambda) \lesssim \lambda_2^{\delta}  
\prod_{j=1}^3  \norm{\psi_j}_{U^2_{\epsilon_j}}  \norm{\psi_4}_{V^2_{\epsilon_4}}.
 \end{equation}
\end{lemma}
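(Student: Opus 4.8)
The key is that when $\epsilon_1=\epsilon_3=+$, $\epsilon_4=-$ and $\lambda_1\le\lambda_2\ll\lambda_3\sim\lambda_4$, the two quadratic factors have incompatible resonance behavior: the factor $A(\psi_1,\psi_2)$ (with $\psi_1,\psi_2$ both on the $+$ cone, or at least $\epsilon_1=+$) has frequency of size $\mu\ll\lambda_3$ and space-time Fourier support concentrated near the light cone with modulation at most $\sim\mu$, whereas $Q(\psi_3,\psi_4)$ pairs a $+$ wave with a $-$ wave at comparable high frequencies $\lambda_3\sim\lambda_4$, forcing one of its inputs (or the output) to have large modulation. Quantitatively, for the $(+-)$ interaction at frequencies $\lambda_3\sim\lambda_4$ and output frequency $\mu\ll\lambda_3$ one has the elementary identity $|\tau_3+\langle\xi_3\rangle_m| + |\tau_4-\langle\xi_4\rangle_m| + |\tau_0| \gtrsim \langle\xi_3\rangle_m+\langle\xi_4\rangle_m \sim \lambda_3$, where $\xi_0=\xi_3-\xi_4$, $\tau_0=\tau_3-\tau_4$ is the frequency of the product. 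So at least one of the three modulations is $\gtrsim\lambda_3$. I would split $\psi_3,\psi_4$ and the output via the modulation projections $\Lambda^{\pm}_{\ge d}$, $\Lambda^{\pm}_{<d}$ with threshold $d\sim\lambda_3$, obtaining three cases according to which factor carries the high modulation.

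**Key steps.** First, insert a Littlewood–Paley decomposition in the output frequency $\mu$ of $A(\psi_1,\psi_2)$; since $\lambda_1\le\lambda_2\ll\lambda_3$ we have $\mu\lesssim\lambda_2$ (from \eqref{mucomp}), and convolution with $V$ contributes $\langle\mu\rangle^{-2}$. Second, perform the modulation trichotomy above at level $d\sim\lambda_3$. In the case where $Q(\psi_3,\psi_4)$ (equivalently its output) has modulation $\gtrsim\lambda_3$ — i.e. the term $\Lambda^{\epsilon_0}_{\ge\lambda_3}$ applied to the product — I would pair with $A(\psi_1,\psi_2)$ directly and gain from \eqref{modul2}: the product $A(\psi_1,\psi_2)$ lies in $V^2_{\epsilon_0}$ for the appropriate combined sign $\epsilon_0$, up to acceptable losses, so $\|\Lambda^{\epsilon_0}_{\ge\lambda_3}(\cdot)\|_{L^2}\lesssim\lambda_3^{-1/2}\|A(\psi_1,\psi_2)\|_{V^2}$, and this $\lambda_3^{-1/2}$ gain beats the worst $\lambda_3^{1/2}$ loss coming from the bilinear estimates \eqref{BiestQU4}/\eqref{BiestB}. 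In the cases where $\psi_3$ (resp. $\psi_4$) carries modulation $\gtrsim\lambda_3$, I would move that high-modulation function into $L^2$ via \eqref{modul1}/\eqref{modul2} ($\|\Lambda^{+}_{\ge\lambda_3}\psi_3\|_{L^2}\lesssim\lambda_3^{-1/2}\|\psi_3\|_{V^2_+}$, and $U^2_+\subset V^2_+$), estimate the remaining low-modulation high-frequency factor in $L^\infty_t L^2_x$ or via a Strichartz-type bound, and control $A(\psi_1,\psi_2)$ in a suitable Strichartz norm using Lemma \ref{lm-str} together with the $L^2$ factor — Hölder in the exponents $2,\infty$ or $4,4$ as needed. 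Throughout, the $\langle\mu\rangle^{-2}$ from $V$ makes the sum over $\mu\lesssim\lambda_2$ converge, producing at worst a harmless $\ln\lambda_2\lesssim\lambda_2^{\delta}$ or a bounded factor. Finally, if any sub-case still produces a $U^4$ norm on $\psi_4$ with a power of $\lambda_2$ or $\lambda_3$ (as in Sub-case 1), I would combine it with the clean $U^2$ estimate just obtained and invoke Lemma \ref{lm-interpu2ub} to pass from $U^2_{\epsilon_4}$/$U^4_{\epsilon_4}$ to $V^2_{\epsilon_4}$ at the cost of $\lambda_2^{\delta}$.

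**Main obstacle.** The delicate point is bookkeeping the sign structure after taking modulation projections: $A(\psi_1,\psi_2)$ is a product of a $+$ wave with an $\epsilon_2$ wave, so its output, as a space-time function, is adapted to the propagator $e^{-it(\pm\langle\xi\rangle_m)}$ only after one accounts for the $(+,+)$ or $(+,-)$ combination — one must verify that the relevant combined object still lies in a $V^2_{\pm}$ space (or is a sum of finitely many such, at the price of the bilinear constants already recorded in Lemma \ref{lm-biest-strz}) so that the modulation estimates of Lemma \ref{lm-modul} genuinely apply. A second, more technical obstacle is confirming the resonance identity $|\text{mod}_3|+|\text{mod}_4|+|\text{mod}_0|\gtrsim\lambda_3$ uniformly in $m\ge 0$; for $m=0$ this is the standard $(+-)$ high-high-to-low non-resonance, and for $m>0$ the mass only helps since $\langle\xi\rangle_m\ge|\xi|$, but one should check the borderline regime $\mu\sim\lambda_2$ carefully. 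Once the trichotomy and the sign bookkeeping are in place, each branch is a routine Hölder-plus-Strichartz estimate of the type already carried out for $I_1$–$I_4$, and summing in $\mu$ with the $\langle\mu\rangle^{-2}$ weight closes the argument with the stated $\lambda_2^{\delta}$ loss.
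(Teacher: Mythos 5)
Your overall strategy --- identify a non-resonance identity that forces one factor to carry modulation $\gtrsim \lambda_3$, then split in modulation and gain $\lambda_3^{-1/2}$ from Lemma~\ref{lm-modul} --- is exactly the paper's. The cases where $\psi_3$ or $\psi_4$ carries the high modulation are handled the same way as in the paper (move the high-modulation piece to $L^2$ via \eqref{modul2}, the companion to $L^\infty_t L^2_x$, and the other bilinear block to a Strichartz norm; finish with Lemma~\ref{lm-interpu2ub}). However, your third case contains a genuine gap.

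You set up a trichotomy on $\bigl|\tau_3+\angles{\xi_3}_m\bigr|$, $\bigl|\tau_4-\angles{\xi_4}_m\bigr|$, and $|\tau_0|$, and in the branch where $|\tau_0|\gtrsim\lambda_3$ you propose to apply $\Lambda^{\epsilon_0}_{\ge\lambda_3}$ to the product $A(\psi_1,\psi_2)$ and invoke \eqref{modul2}, asserting that ``the product $A(\psi_1,\psi_2)$ lies in $V^2_{\epsilon_0}$ for the appropriate combined sign.'' This is not true: $V^2_\pm$ is adapted to a single linear propagator $e^{\mp it\angles{D}_m}$, and a product of two $V^2$-adapted waves is not itself adapted to any one propagator. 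Even for pure free waves $e^{-it\angles{D}_m}f_1 \cdot \overline{e^{\mp it\angles{D}_m}f_2}$ the space-time Fourier support lies on a two-parameter surface, not on a single characteristic cone, so $S_m(\mp t)A(\psi_1,\psi_2)\notin V^2$ in general and there is no way to bound its $V^2$ norm by the $V^2$ norms of $\psi_1,\psi_2$. Consequently \eqref{modul2} cannot be applied to $A(\psi_1,\psi_2)$ and the claimed $\lambda_3^{-1/2}$ gain does not materialize through this route.

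The correct way to close the third branch --- and what the paper does --- is to decompose all four inputs individually in modulation, $\psi_j = \Lambda^{\epsilon_j}_{<\lambda_3/8}\psi_j + \Lambda^{\epsilon_j}_{\ge\lambda_3/8}\psi_j$, and observe from the full resonance identity
$$
\bigl(\tau_1+\angles{\xi_1}_m\bigr)-\bigl(\tau_2+\epsilon_2\angles{\xi_2}_m\bigr)+\bigl(\tau_3+\angles{\xi_3}_m\bigr)-\bigl(\tau_4-\angles{\xi_4}_m\bigr)=\angles{\xi_1}_m-\epsilon_2\angles{\xi_2}_m+\angles{\xi_3}_m+\angles{\xi_4}_m\ge\tfrac{\lambda_3}{2}
$$
that the all-low-modulation contribution vanishes, so at least one of $\psi_1,\dots,\psi_4$ is at modulation $\gtrsim\lambda_3$. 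Your ``$|\tau_0|$ large'' case is then subsumed: since $\tau_0=\tau_1-\tau_2$ and $\angles{\xi_1}_m,\angles{\xi_2}_m\lesssim\lambda_2\ll\lambda_3$, large $|\tau_0|$ forces $\psi_1$ or $\psi_2$ onto high modulation, and each of those can be placed in $L^2$ via \eqref{modul2} while the remaining factor of $A$ goes to $L^\infty_t L^2_x$; the bilinear block $Q(\psi_3,\psi_4)$ is then estimated in $L^2_{t,x}$ (or $L^2_tL^\infty_x$ via Sobolev) using Corollary~\ref{lmNullestU2} or \eqref{BiestQU4}. With that replacement your sketch matches the paper's proof; the remaining bookkeeping over $\mu\lesssim\lambda_2$ with the $\angles{\mu}^{-2}$ weight and the interpolation to pass to $V^2_{\epsilon_4}$ are exactly as you describe.
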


\begin{proof}
 Decompose the functions $\psi_j$ into a low and high modulation part, i.e.,
 $\psi_j=\psi_j^l+\psi_j^h$, where
$$
\psi_j^h:= \Lambda^{\epsilon_j}_{\ge \lambda_3/8} \psi_j, \quad \psi_j^l:= \ \Lambda^{\epsilon_j}_{< \lambda_3/8 } \psi_j.
$$

We claim that
$$
\int_{\R^{4}}    V\ast A( \psi^l_1, \psi^l_2)   \cdot  Q( \psi^l_3,   \psi^l_4 )
  \ dt dx=0.
$$
Indeed, let
$(\tau_j, \xi_j)$ be the space-time Fourier variables
of the functions $\psi^l_j$.
 Clearly, the integral vanishes unless $\tau_1-\tau_2+\tau_3-\tau_4=0$ and $\xi_1-\xi_2+\xi_3-\xi_4=0$. 
 By assumption and definition of low modulation, the contributing set must then satisfy 
 \begin{align*}
 4\cdot \frac {\lambda_3} 8=\frac \lambda 2 & > | (\tau_1 + \angles{\xi_1}_m) - (\tau_2 +\epsilon_2 \angles{\xi_2}_m) +  (\tau_3 + \angles{\xi_3}_m )- (\tau_4 - \angles{\xi_4}_m )|
 \\
 &= \angles{\xi_1}_m  - \epsilon_2  \angles{\xi_2}_m +  \angles{\xi_3}_m  + \angles{\xi_4}_m   \ge  \frac{\lambda_3} 2,
 \end{align*}
 which is a contradiction,  and hence the integral vanishes.
Thus, we always have at least one function on high modulation in the integral for $J$.
There are 15 cases of which at least one of the four functions has high modulation but we consider only 4 cases where one of the functions is on high  modulation and the other functions are on high or low modulation.

\subsection*{Case 1:  $\psi_1=\psi^h_1$ or $\psi_2=\psi^h_2$}
We only consider the case $\psi_1=\psi^h_1$ since 
  the case $\psi_2=\psi^h_2$ can be handled in a similar way.
  
  \subsubsection*{Sub-case (i):  $A=Q$}
  By H\"{o}lder, Sobolev, Lemma \ref{lm-str}, Corollary \ref{lmNullestU2}\eqref{Nullest+-} and \eqref{modul2} we obtain
 \begin{align*}
 J(\lambda)  & \lesssim \sum_{\mu\ge 1 } \|  \angles{D}^{-2} P_\mu  Q (  \psi_1^h, \psi_2) \|_{  L^2_t L^1_x}  \|  P_\mu  Q (  \psi_3, \psi_4) \|_{  L^2_t L^\infty_x}   
 \\
  & \lesssim \sum_{\mu\ge 1 } \angles{\mu}^{-2}  \mu^\frac32 \norm{ \psi^h_1  }_{ L^2_{t ,x} }   \norm{ \psi_2  }_{    L^\infty_t L^2_x}    \|  P_\mu  Q (  \psi_3, \psi_4) \|_{  L^2_{t ,x} }   
  \\
 &\lesssim \sum_{1\le \mu\lesssim \lambda_2 } \angles{\mu}^{-2}  \mu^\frac52 \lambda_3^{-\frac12}
\prod_{j=1}^2  \norm{\psi_j }_{V^2_{\epsilon_j}} \prod_{j=3}^4  \norm{\psi_j }_{U^2_{\epsilon_j}} 
\\
 &\lesssim  \lambda_2^\frac12 \lambda_3^{-\frac12}
\prod_{j=1}^3  \norm{\psi_j}_{U^2_{\epsilon_j}}  \norm{\psi_4 }_{U^2_{\epsilon_4}},
 \end{align*}
 where we used the physical space representation of the term $  Q (  \psi_1^h, \psi_2) $ found in \eqref{Nullform-phy}.
On the other hand,  applying \ref{BiestQU4} to the norm $   \|  P_\mu  Q (  \psi_3, \psi_4) \|_{  L^2_{t ,x} }   $ we obtain
 \begin{align*}
 J  (\lambda)
  &\lesssim \sum_{1\le \mu\lesssim \lambda_2 } \angles{\mu}^{-2}  \mu^\frac32 \lambda_3^{-\frac12} (\lambda_3 \lambda_4)^\frac12
\prod_{j=1}^2  \norm{\psi_j }_{V^2_{\epsilon_j}}  \prod_{j=3}^4 \norm{\psi_j }_{U^4_{\epsilon_j}}
\\
 &\lesssim  \lambda_3^{\frac12}
\prod_{j=1}^3  \norm{\psi_j}_{U^2_{\epsilon_j}}  \norm{\psi_4 }_{U^4_{\epsilon_4}},
 \end{align*}
 Now we use Lemma \ref{lm-interpu2ub} to interpolate between 
   the two estimates for $J(\lambda)$  above to obtain the desired estimate \eqref{J-resonest}.

  \subsubsection*{Sub-case (ii):  $A=B$}
 By H\"{o}lder, Sobolev, Lemma \ref{lm-str}, Corollary \ref{lmNullestU2}\eqref{Nullest+-} and \eqref{modul2} we obtain
 \begin{align*}
 J(\lambda)  & \lesssim \sum_{\mu\ge 1 } \|  \angles{D}^{-2} P_\mu  B (  \psi_1^h, \psi_2) \|_{  L^2_t L^1_x}  \|  P_\mu  Q (  \psi_3, \psi_4) \|_{  L^2_t L^\infty_x}   
 \\
  & \lesssim \sum_{\mu\ge 1 } \angles{\mu}^{-2}  \mu^\frac32  
  \lambda_1^{-1}\norm{ \psi^h_1  }_{ L^2_{t ,x} }   \norm{ \psi_2  }_{    L^\infty_t L^2_x}    \|  P_\mu  Q (  \psi_3, \psi_4) \|_{  L^2_{t ,x} }   
  \\
 &\lesssim \sum_{1\le \mu\lesssim \lambda_2 } \angles{\mu}^{-2}  \mu^\frac52 \lambda_1^{-1} \lambda_3^{-\frac12}
\prod_{j=1}^2  \norm{\psi_j }_{V^2_{\epsilon_j}}  \prod_{j=3}^4  \norm{\psi_j }_{U^2_{\epsilon_j}} 
\\
 &\lesssim  \lambda_2^\frac12 \lambda_3^{-\frac12}
\prod_{j=1}^3  \norm{\psi_j}_{U^2_{\epsilon_j}}  \norm{\psi_4 }_{U^2_{\epsilon_4}},
 \end{align*}
where the  H\"{o}lder inequality is applied on the physical space representation of $  B (  \psi_1^h, \psi_2) $ found in \eqref{Biform-phy}.
On the other hand,  applying \ref{BiestQU4} to the norm $   \|  P_\mu  Q (  \psi_3, \psi_4) \|_{  L^2_{t ,x} }   $ we obtain
 \begin{align*}
 J(\lambda)
  &\lesssim \sum_{1\le \mu\lesssim \lambda_2 } \angles{\mu}^{-2}  \mu^\frac32 \lambda_1^{-1} \lambda_3^{-\frac12} (\lambda_3 \lambda_4)^\frac12
\prod_{j=1}^2  \norm{\psi_j }_{V^2_{\epsilon_j}} \prod_{j=3}^4  \norm{\psi_j}_{U^4_{\epsilon_j}}  
\\
 &\lesssim  \lambda_3^{\frac12}
\prod_{j=1}^3  \norm{\psi_j}_{U^2_{\epsilon_j}}  \norm{\psi_4 }_{U^4_{\epsilon_4}},
 \end{align*}
 Interpolating between 
   the two estimates for $J(\lambda)$  above, using Lemma \ref{lm-interpu2ub}, and obtain the desired estimate \eqref{J-resonest}.

\subsection*{Case 2: $\psi_3=\psi^h_3$  or $\psi_4=\psi^h_4$ }
We only consider the case $\psi_4=\psi^h_4$ since 
  the case $\psi_3=\psi^h_3$ can be handled in a similar way.
  
\subsubsection*{Sub-case (i):  $A=Q$}

By H\"{o}lder, Sobolev, Lemma \ref{lm-str},  Lemma \ref{lmNullestU2}  and \eqref{modul2} we have
 \begin{align*}
J(\lambda)  & \lesssim  \sum_{\mu\ge 1 } \norm{\angles{D}^{-2}    P_\mu  Q (  \psi_1, \psi_2)  }_{L^2_t L_x^\infty}   \norm{   P_\mu  Q (  \psi_3, \psi_4^h)  }_{L^2_t L_x^1} 
  \\
 &\lesssim   \sum_{\mu\ge 1 }  \mu^{-\frac12} \norm{   P_\mu  Q (  \psi_1, \psi_2)  } \norm{ \psi_3  }_{L^\infty_t L^2_x } 
 \norm{  \psi^h_4  }_{ L^2 }   
 \\
 &\lesssim  \sum_{1\le \mu \lesssim \lambda_3 } \mu^\frac12 \lambda_3^{-\frac12}  
\prod_{j=1}^2  \norm{\psi_j }_{U^2_{\epsilon_j}} \prod_{j=3}^4  \norm{\psi_j}_{V^2_{\epsilon_j}} 
\lesssim  
\prod_{j=1}^3  \norm{\psi_j }_{U^2_{\epsilon_j}}  \norm{\psi_4 }_{V^2_{\epsilon_4}}.
 \end{align*}

\subsubsection*{Sub-case (ii):  $A=B$}
By H\"{o}lder, Sobolev, \eqref{BiestB} and \eqref{modul2} we have
\begin{align*}
J(\lambda)  & \lesssim  \sum_{\mu\ge 1 } \norm{  \angles{D}^{-2}  P_\mu  B (  \psi_1, \psi_2)  }_{L^2_t L_x^\infty}  \norm{   P_\mu  Q (  \psi_3, \psi_4^h)  }_{L^2_t L_x^1}  
  \\
 &\lesssim  \sum_{\mu\ge 1 }  \mu^{-\frac12}\norm{   P_\mu  B (  \psi_1, \psi_2)  } \norm{   P_\mu  Q (  \psi_3, \psi_4^h)  }_{L^2_t L_x^1}  
 \\
 &\lesssim   \sum_{\mu\ge 1 }  \mu^{-\frac12}  \lambda_1^{-1}(\lambda_1\lambda_2)^\frac12 \norm{   \psi_1  }_{V^2_{\epsilon_1}}
  \norm{   \psi_2  }_{V^2_{\epsilon_2}}  \norm{ \psi_3  }_{L^\infty_t L^2_x } 
 \norm{  \psi^h_4  }_{ L^2_{t,x} }   
 \\
 &\lesssim  \sum_{  \mu\ge 1  }    \mu^{-\frac12} \lambda_1^{-\frac12} \lambda_2^{\frac12}\lambda_3^{-\frac12}  
\prod_{j=1}^4  \norm{\psi_j }_{V^2_{\epsilon_j}} 
\lesssim  
\prod_{j=1}^3  \norm{\psi_j }_{U^2_{\epsilon_j}}  \norm{\psi_4}_{V^2_{\epsilon_4}},
 \end{align*}
  since $\lambda_1 \le \lambda_2\ll  \lambda_3\sim \lambda_4 $.

  \end{proof}

 %**************************************************************************  
 %**************************************************************************  
  \section{Proof of Lemma \ref{lmI+-} }  \label{seclmI+-}
 
To  prove the estimates in Lemma \ref{lmI+-} we closely follow the argument of Foschi–Klainerman for $m = 0$ 
\cite[Lemma 4.1 and Lemma 4.4]{FK00}. 

For a smooth function $\varphi$, define the hypersurface
$S=\{x\in\R^3: \varphi(x)=0\}. $ If
$\nabla \varphi \neq 0$ for $x\in S \cap \text{supp} (\Phi)$, then
    \begin{equation}\label{Delta1}
   \int_{\R^3} \Phi(x) \delta(\varphi(x))\, dx=\int_S \frac{\Phi(x)}{|\nabla \varphi(x)|} \, dS_x.
  \end{equation}
For a nonnegative smooth function $h$ which does not vanish on $S$, \eqref{Delta1} also implies
 \begin{equation}\label{Delta2}
  \delta(\varphi(x))=h(x)\delta\left(h(x)\varphi(x)\right).
  \end{equation}

\subsection{Proof of Lemma \ref{lmI+-}\eqref{lmI+-i}}
First note that the integral $I_+(f, g)$ is supported on the set 
\begin{equation}\label{elipsoid} 
\mathcal E(\tau, \xi)=\{ \eta\in \R^3: \angles{\eta}_m+\angles{\xi-\eta}_m =\tau \}.
\end{equation}
Thus for $ \eta\in \mathcal E(\tau, \xi)$ we have
 \begin{equation}\label{elipest} 
\begin{split}
\tau^2-|\xi|^2-4m^2&= (\angles{\eta}_m+\angles{\xi-\eta}_m)^2-|\xi|^2-4m^2
\\
&=
 2\angles{\eta}_m\angles{\xi-\eta}_m-2(|\eta||\xi-\eta|+m^2) + 2(|\eta||\xi-\eta|-\eta \cdot (\xi-\eta))
 \\
 &\ge 0.
\end{split}
\end{equation}

Now we use \eqref{Delta2} to write
\begin{equation}\label{deltacomp} 
\left\{
\begin{aligned}
  &\delta( \tau-\angles{\eta}_m-\angles{\xi-\eta}_m)
  \\  
   & \qquad =[( \tau-\angles{\eta}_m)+ \angles{\xi-\eta}_m]
   \delta\left( (\tau-\angles{\eta}_m)^2-
   \angles{\xi-\eta}_m^2\right)
   \\
      & \qquad =2(\tau- \angles{\eta}_m)
   \delta\left(\tau^2-|\xi|^2-2\tau \angles{\eta}_m +2\xi \cdot \eta \right),
 \end{aligned}
\right.
\end{equation}
  where in the first line we multiplied the argument of the delta function on the left by 
   $ (\tau-\angles{\eta}_m)+\angles{\xi-\eta}_m$.
 
Introduce polar coordinate $\eta=\sigma \omega$, where $\omega \in \mathbb S^2$: $$ |\eta|=\sigma, \quad
d\eta= \sigma^2 dS_w d\sigma.$$
Then using \eqref{deltacomp} and \eqref{Delta2} we obtain
\begin{align*}
I_+(f, g)(\tau, \xi)&=  2 \int_{\R^3} (\tau- \angles{\eta}_m) f(|\eta|) g( |\xi-\eta|) 
   \delta\left(\tau^2-|\xi|^2-2\tau \angles{\eta}_m +2\xi \cdot \eta \right) \, d\eta
\\
&\simeq \int_0^{\sqrt{\tau^2-m^2}} \int_{\omega \in \mathbb S^2 } 
\sigma^2(\tau-\angles{\sigma}_m)  f(\sigma)g \left( \sqrt{ (\tau-\angles{\sigma}_m)^2-m^2 }\right)
\delta\left( \tau^2-|\xi|^2-2\tau \angles{\sigma}_m+2\sigma \xi\cdot \omega \right )\, dS_\omega d\sigma
\\
&\simeq \frac1{|\xi|} \int_0^{\sqrt{\tau^2-m^2}} \int_{\omega \in \mathbb S^2 } 
\sigma (\tau-\angles{\sigma}_m)   f(\sigma)g \left( \sqrt{ (\tau-\angles{\sigma}_m)^2-m^2 }\right)
\delta\left( \frac{\tau^2-|\xi|^2-2\tau \angles{\sigma}_m}{2|\xi|\sigma }+ \frac{\xi  \cdot \omega}{|\xi|} \right )\, dS_\omega d\sigma,
 \end{align*}
 where in the second inequality we used the fact that
  $$ \tau-\angles{\eta}_m)=\angles{\xi-\eta}_m \ \Rightarrow \  |\xi-\eta|= \sqrt{ (\tau-\angles{\sigma}_m)^2-m^2 } .$$
 
Changing variable 
$$s=  \frac{\omega \cdot\xi}{ |\xi|} \ \Rightarrow \  dS_w = dS_{w'} ds, \ \ \text{
where } \ \ \omega' \in \mathbb S^1,$$ 
we have
\begin{align*}
I_+(f, g)(\tau, \xi)
&\simeq \frac1{|\xi|}\int_0^{\sqrt{\tau^2-m^2}} \int_{-1}^{1} 
\sigma (\tau-\angles{\sigma}_m)  f(\sigma) g\left( \sqrt{ (\tau-\angles{\sigma}_m)^2-m^2 }\right)
\delta\left( \frac{\tau^2-|\xi|^2-2\tau \angles{\sigma}_m}{2|\xi|\sigma}+ s \right ) \, ds d\sigma
 \end{align*}
 Again, changing variable
 $$r=\angles{\sigma}_m \ \Rightarrow \ r dr=\sigma d\sigma $$
  we obtain
\begin{align*}
I_+(f, g)(\tau, \xi)
&\simeq \frac1{|\xi|}\int_m^{\tau} \int_{-1}^{1} 
r(\tau-r)  f\left( \sqrt{r^2-m^2} \right)  g \left(\sqrt{ (\tau-r)^2-m^2} \right)  
\delta\left( \frac{\tau^2-|\xi|^2-2\tau r }{2|\xi| \sqrt{r^2-m^2} } + s \right ) \, ds  dr
\\
&\simeq \frac1{|\xi|}\int_{0}^\infty  
r(\tau-r) \mathbb{1}_{\mathcal D_+}(r) f\left( \sqrt{r^2-m^2} \right)  g \left( \sqrt{ (\tau-r)^2-m^2} \right) 
 dr,
 \end{align*}
 where
 \begin{align*}
\mathcal D_+:=\mathcal D_+(\tau, \xi) &=[m, \tau] \cap \left\{   r\in \R:  \  -1 \le \frac{\tau^2-|\xi|^2-2\tau r }{2|\xi| \sqrt{r^2-m^2} }\le 1 \right\} 
 \end{align*}
 
 So $r \in \mathcal D_+$ if and only if $r\in [m, \tau]$ and 
 $$ (\tau^2-|\xi|^2-2\tau r)^2 - 4|\xi|^2r^2+4m^2 |\xi|^2 \le 0. $$
The latter condition is equivalent to 
 $$
 (r-a_+)(r-a_-)\le 0,
 $$
 where 
 $$a_\pm=\frac\tau2 \pm \frac{|\xi|}2 \sqrt{\frac{\tau^2-|\xi|^2-4m^2}{\tau^2-|\xi|^2}}.
 $$
 Thus $r\in [a_-, a_+]$, and hence  $\mathcal D_+=  [m, \tau] \cap [a_-, a_+]$ .  We claim that $ [a_-, a_+] \subseteq [m, \tau]$.
 Clearly,  $a_+ \le \tau$ since $|\xi|<\tau$ and $1-\frac{
4m^2}{\tau^2-|\xi|^2} \le 1$ by \eqref{elipest}. The condition $a_-\ge m$ is equivalent to 
$$
\tau-2m\ge |\xi| \sqrt{\frac{\tau^2-|\xi|^2-4m^2}{\tau^2-|\xi|^2}}
$$
 which can be squared to obtain
 $$
 \tau^4-4m\tau^3+4m^2\tau^2-2|\xi|^2\tau^2+4m\tau |\xi|^2+ |\xi|^4\ge 0
 $$
 The expression on the left hand side can be written as 
 \begin{align*}
\left ((\tau-m)^2-m^2\right)^2-2|\xi|^2\left((\tau-m)^2-m^2\right)+ |\xi|^4=\left((\tau-m)^2-m^2)- |\xi|^2\right)^2
 \end{align*}
 which is $\ge 0.$

 Thus $\mathcal D_+= [a_-, a_+]$, and hence
 \begin{align*}
I_+(f, g)(\tau, \xi)
&\simeq \frac1{|\xi|}\int_{a_-}^{a_+}
r(\tau-r)  f\left( \sqrt{r^2-m^2} \right)  g \left( \sqrt{ (\tau-r)^2-m^2} \right)   
 dr.
 \end{align*}

\subsection{Proof of Lemma \ref{lmI+-}\eqref{lmI+-ii}}

First note that the integral $I_-(f, g)$ is supported on the set 
\begin{equation}\label{hypoid} 
\mathcal H(\tau, \xi)=\{ \eta\in \R^3: \angles{\eta}_m-\angles{\xi-\eta}_m =\tau \}.
\end{equation}
Thus for $ \eta\in \mathcal H(\tau, \xi)$ we have
\begin{equation}\label{hypest} 
\begin{split}
 |\xi|^2-\tau^2&=  |\xi|^2- (\angles{\eta}_m-\angles{\xi-\eta}_m)^2
\\
&= 2\angles{\eta}_m\angles{\xi-\eta}_m-2m^2+2\eta \cdot (\xi-\eta)) \ge 0,
\end{split}
\end{equation}

where in the last inequality we used the fact 
$$
 2\angles{\eta}_m\angles{\xi-\eta}_m\ge 2 |\eta||\xi-\eta|+ 2m^2.
$$
Since the expression on the second line is $\ge 0$, we conclude
 \begin{equation}\label{elipest} 
\tau^2 - |\xi|^2\ge 4m^2.
\end{equation}

 By \eqref{Delta2} we have
     \begin{align*}
   \delta( \tau-\angles{\eta}_m+\angles{\xi-\eta}_m)
   &=[-( \tau-\angles{\eta}_m)+ \angles{\xi-\eta}_m]
   \delta\left( -(\tau-\angles{\eta}_m)^2+
   \angles{\xi-\eta}_m^2\right)
   \\
      &=-2(\tau- \angles{\eta}_m)
   \delta\left(|\xi|^2-\tau^2+2\tau \angles{\eta}_m -2\xi \cdot \eta \right),
  \end{align*} 
  where in the first line we multiplied the argument of the delta function on the left by 
   $ -(\tau-\angles{\eta}_m)+\angles{\xi-\eta}_m$.
   
   Now introduce
 polar coordinate $\eta=\sigma \omega$, where $\omega \in \mathbb S^2$.
Proceeding similarly as in the above subsection we obtain
\begin{align*}
I_-(f, g)(\tau, \xi)
&
\simeq\frac1{|\xi|}\int_0^{\infty} \int_{-1}^{1} 
\sigma
(\angles{\sigma}_m-\tau)   f(\sigma) g\left(\sqrt{ (\angles{\sigma}_m-\tau)^2-m^2 }\right)
\delta\left( \frac{|\xi|^2-\tau^2+2\tau \angles{\sigma}_m}{2|\xi|\sigma}- s \right )\, ds d\sigma
\\
&=\frac1{|\xi|}\int_{0}^\infty
r(r-\tau)  \mathbb{1}_{ \mathcal D_-}(r)   f\left( \sqrt{r^2-m^2}\right)  g\left( \sqrt{ (r-\tau)^2-m^2} \right)  
 \, dr,
 \end{align*}
  where
 \begin{align*}
\mathcal D_-:=\mathcal D_-(\tau, \xi) &=[m, \infty) \cap  \left\{  r\in \R:  \  -1 \le \frac{|\xi|^2-\tau^2+2\tau r }{2|\xi| \sqrt{r^2-m^2} }\le 1 \right\} .
 \end{align*}

 So $r \in\mathcal D_- $ if and only if $r\in [m, \infty]$ and 
 $$ (|\xi|^2-\tau^2+2\tau r)^2 - 4|\xi|^2r^2+4m^2 |\xi|^2 \le 0 $$
The latter condition is equivalent to 
 $$
 (r-a_+)(r-a_-)\ge  0,
 $$
 where $a_\pm$ is given above.
 Thus $r\le a_-$ or $r\ge a_+$, and hence 
 $$\mathcal D_-= [m, \infty) \cap \{ (-\infty,  a_-] \cup [a_+, \infty) \}. $$
We claim that $ a_- <m $ and  $ a_+\ge m $. These would imply $\mathcal D_-= [a_+, \infty) $, and hence \begin{align*}
I_-(f, g)(\tau, \xi)
&\simeq \frac1{|\xi|}\int_{b_+}^{\infty}
r(r-\tau)  f\left( \sqrt{r^2-m^2}\right)  g\left( \sqrt{ (r-\tau)^2-m^2} \right)  
 dr.
 \end{align*}
 
 It remains to prove the claim. By \eqref{hypest} we have $-|\xi| \le \tau \le |\xi|$.
 So clearly, 
 $$
 a_-\le \frac{|\xi| } 2 \left( 1- \sqrt{1+ \frac{4m^2}{|\xi|^2-\tau^2} } \right)\le 0 \le m.
 $$
Next we
  show that $a_+\ge m$.  If  $0\le \tau \le |\xi|$ we have
  $$
a_+ \ge  \frac\tau 2+ \frac{|\xi|}2 \cdot \sqrt{\frac{4m^2}{|\xi|^2} }= \frac\tau 2 +m \ge m.
 $$
 Now assume $-|\xi| \le  \tau <0$. Then $a_+>m$ if and only if 
$$
|\xi| \sqrt{\frac{|\xi|^2-\tau^2+4m^2}{|\xi|^2-\tau^2}}\ge 2m-\tau.
$$
 Since $\tau<0$, we can square both sides to obtain the condition
 $$
 \tau^4-4m\tau^3+4m^2\tau^2-2|\xi|^2\tau^2+4m\tau |\xi|^2+ |\xi|^4\ge 0
 $$
 The expression on the left hand side can be written as 
 \begin{align*}
\left ((\tau-m)^2-m^2\right)^2-2|\xi|^2\left((\tau-m)^2-m^2\right)+ |\xi|^4=\left((\tau-m)^2-m^2)- |\xi|^2\right)^2
 \end{align*}
which is $\ge 0$.

%%%%%%%%%%%%%%%%%%%%%%%%%%%%%%%%%%%%%%%%%

\section{Proof of Lemma \ref{lmBiest} } \label{seclmBiest}

By symmetry we may assume $\lambda_1 \le \lambda_2$. 
Thus, we are reduced to the following cases:
\begin{enumerate} [(a)]
\item $\lambda_1\lesssim \lambda_2 \sim \mu$ ,
\item $\mu\ll \lambda_1\sim \lambda_2$.
\end{enumerate}
\subsection{ Case (a): $\lambda_1\lesssim \lambda_2 \sim \mu$} 
First assume $\mu =1$, and hence $\lambda_2\sim \mu=1$.
In this case we simply apply H\"{o}lder and Lemma \ref{lm-str} to obtain
 \begin{equation}\label{Biestmu1}
 \begin{split}
  \norm{P_\mu (S_m(t) f_{ \lambda_1} S_m(\pm t) g_{\lambda_2 }) }_{ L^2_{t, x} }&\lesssim 
  \norm{ S_m(t) f_{ \lambda_1} }_{  L^4_{t, x} }\norm{ S_m(\pm t) g_{\lambda_2 })}_{ L^4_{t, x} }
  \\
  &\lesssim 
\norm{ f_{\lambda_1}}_{L^2_{ x}(\R^{3})} \norm{g_{ \lambda_2}}_{L^2_{ x}(\R^{3})}.
\end{split}
 \end{equation}

 Thus, we may from now on assume $\mu > 1$. 
Taking the space-time Fourier transform we have
\begin{align*}
\mathcal{F}_{t,x}\left[P_\mu (S_m(t) f_{ \lambda_1} S_m(t) g_{\lambda_2 }) \right](\tau, \xi)&= \rho_\mu(|\xi|) \int_{\R^3}  \widehat{f_{\lambda_1} }(\eta)  \widehat{g_{\lambda_2}}(\xi-\eta)\delta( \tau-\angles{\eta}_m- \angles{\xi-\eta}_m)\, d\eta,
\\
\mathcal{F}_{t,x}\left[P_\mu (S_m(t) f_{ \lambda_1} S_m(-t) g_{\lambda_2 }) \right](\tau, \xi)&= \rho_\mu(|\xi|) \int_{\R^3}  \widehat{f_{\lambda_1} }(\eta)  \widehat{g_{\lambda_2}}(\xi-\eta)\delta( \tau-\angles{\eta}_m+ \angles{\xi-\eta}_m)\, d\eta.
\end{align*}

By Cauchy-Schwarz
\begin{align*}
& \left|\mathcal{F}_{t,x}\left[P_\mu (S_m(t) f_{ \lambda_1} S_m( \pm t) g_{\lambda_2 }) \right](\tau, \xi)\right|^2
\\
&\qquad \qquad \le I_\pm(\tau, \xi) \cdot \int_{\R^3} | \widehat{f_{\lambda_1}}(\eta) |^2   |\widehat{g_{\lambda_2}}(\xi-\eta)|^2 \delta( \tau-\angles{\eta}_m\mp \angles{\xi-\eta}_m)\, d\eta,
\end{align*}
where
\begin{align*}
 I_\pm(\tau, \xi)&=  \rho_\mu(|\xi|)  \int_{\R^3}  \rho_{\lambda_1}(|\eta|)  \rho_{\lambda_2}(|\xi-\eta|) \delta( \tau-\angles{\eta}_m\mp \angles{\xi-\eta}_m) \, d\eta.
\end{align*}
Now we claim that 
\begin{align}
\label{Ipmest}
\sup_{(\tau, \xi)\in \R^{1+3}}  I_\pm(\tau, \xi)
\lesssim
 \lambda_1^2 \quad \text{if}  \ \lambda_1\lesssim \lambda_2\sim \mu .
\end{align}
Assume for the moment that this claim holds. Then integration with respect to $\tau$ and $\xi$ gives the following:
\begin{align*}
 \|P_\mu (S_m(t) f_{ \lambda_1} S_m(\pm t) g_{\lambda_2 })\|^2 &= \int_{\R^{1+3}} \left|\mathcal{F}_{t,x}\left[P_\mu (S_m(t) f_{ \lambda_1} S_m(\pm t) g_{\lambda_2 }) \right](\tau, \xi)\right|^2 \, d\tau d\xi
\\
&\lesssim \lambda_1^2   \int_{\R^{3}} \int_{\R^3} | \widehat{f_{\lambda_1}}(\eta) |^2   |\widehat{g_{\lambda_2}}(\xi-\eta)|^2 \left( \int_{\R^{3}}\delta( \tau-\angles{\eta}_m\pm \angles{\xi-\eta}_m) \, d\tau\right) \, d\eta   d\xi
\\
& =\lambda_1^2   \|f_{\lambda_1}\|^2  \|g_{\lambda_2}\|^2,
\end{align*}
where we used the fact $ \int_{\R}\delta( \tau-\angles{\eta}_m\mp \angles{\xi-\eta}_m) \, d\tau =1.$
This estimate together with \eqref{Biestmu1} establishes  Lemma \ref{lmBiest}\eqref{Biest++} and \eqref{Biest+-} in the case $ \lambda_1\lesssim \lambda_2\sim \mu$.

Thus, it remains to prove \eqref{Ipmest}.By Lemma \ref{lmBiest} we have
\begin{align*}
 I_+(\tau, \xi)& \simeq \frac{ \rho_\mu(|\xi|) } {|\xi|} \int_{a_- }^{a_+ } 
r(\tau-r) \rho_{\lambda_1} \left( \sqrt{r^2-m^2} \right)  \rho_{\lambda_2}\left( \sqrt{ (\tau-r)^2-m^2} \right) 
  dr,
  \\
  I_-(\tau, \xi)& \simeq \frac{ \rho_\mu(|\xi|) } {|\xi|} \int_{a_+ }^{\infty } 
r(r-\tau) \rho_{\lambda_1} \left( \sqrt{r^2-m^2} \right)  \rho_{\lambda_2}\left( \sqrt{ (r-\tau)^2-m^2} \right) 
  dr.
\end{align*}

 By the support assumption in the integral
for $I_+$ we have $r\sim \angles{\lambda_1}_m$ and $\tau-r\sim \angles{\lambda_2}_m$. 
Since $\mu> 1$ and $\lambda_1\lesssim \lambda_2\sim \mu$
 we have
\begin{align*}
\sup_{(\tau, \xi)\in \R^{1+3}} I_+(\tau, \xi)& \lesssim\frac{ \angles{\lambda_1}_m \angles{ \lambda_2}_m } {\mu} \int_{ r\sim \angles{\lambda_1}_m } 
  dr\sim  \lambda_1^2.
\end{align*}

Similarly, by the support assumption in the integral
for $I_-$ we have $r\sim \angles{\lambda_1}_m$ and $r-\tau \sim \angles{\lambda_2}_m$. 
Since $\mu> 1$ and $\lambda_1\lesssim \lambda_2\sim \mu$
 we have
\begin{align*}
\sup_{(\tau, \xi)\in \R^{1+3}} I_-(\tau, \xi)& \lesssim\frac{ \angles{\lambda_1}_m \angles{ \lambda_2}_m } {\mu} \int_{ r\sim \angles{\lambda_1}_m } 
  dr\sim  \lambda_1^2.
\end{align*}

\subsection{ Case (b): $\mu\ll \lambda_1\sim \lambda_2$ }
In this case we follow the argument of Foschi–Klainerman for $m = 0$ 
\cite[Lemma 12.1]{FK00} and introduce 
 a collection of cubes $C_z=\mu z+  [0, \mu)^3 $, $z\in \Z^3$,
which induce a disjoint covering of $\R^3$. 
  By the triangle inequality 
  \begin{equation}\label{cubedecomp}
   \|P_\mu (S(t) f_{ \lambda_1} S_m( \pm t) g_{\lambda_2 })\|  \lesssim  \sum_{z, z'\in  \Z^3} \norm{ P_\mu (   S_m( t) P_{C_z}  f_{\lambda_1}  \cdot  S_m( \pm t) P_{C_{z'}}  
     g_{\lambda_2}  )},
         \end{equation}
         where $P_{C_z}$ is the frequency projection onto $C_z$. Let
          $$ f_{\lambda_1, z}:=P_{C_z}  f_{\lambda} \quad \text{and} \quad g_{\lambda_2, z'}:=P_{C_{z'}}  g_{\lambda_2}.
          $$

          Taking the Fourier Transform we have
 \begin{equation}\label{FTcubedecomp}
 \begin{split}
 &\mathcal{F}_{t,x}\left[ P_\mu (   S_m( t) f_{\lambda_1, z} \cdot  S_m(\pm  t) g_{\lambda_2, z'}  )\right](\tau, \xi)
     \\
     &\qquad \qquad = \rho_\mu(|\xi|) \int_{\R^3}  \widehat{ f_{\lambda_1, z} }(\eta)  \widehat{g_{\lambda_2, z'}}(\xi-\eta)\delta( \tau-\angles{\eta}_m\mp \angles{\xi-\eta}_m)\, d\eta.
     \end{split}
         \end{equation}
        Since $\mu\ll \lambda_1\sim \lambda_2$ and $\eta\in C_z$, $\xi-\eta\in C_{z'}$, the integral in \eqref{FTcubedecomp} yields a nontrivial contribution if $C_z$ and $C_{z'}$ are almost opposite, i.e., if $\angle{(\eta, \xi-\eta)}\sim 1$. In other words,
        for each 
     $z\in  \Z^3$, only those $z'\in  \Z^3$ with $|z+z'| \lesssim 1$ yield
    a nontrivial contribution to the sum \eqref{cubedecomp}. We use these observations and apply Lemma \ref{lm-locbiest}\eqref{locbiest++}--\eqref{locbiest+-} below to \eqref{cubedecomp}, and use Cauchy-Schwarz to obtain 
      \begin{align*}
     \|P_\mu (S_m(t) f_{ \lambda_1} S_m( t) g_{\lambda_2 })\|  &\lesssim \mu \sum_{ |z+z'|\lesssim 1 } \|f_{z, \lambda_1}\|  \|g_{z', \lambda_2}\|
     \\
      &\lesssim  \mu \left(\sum_{ z\in \Z^3} \| f_{z, \lambda_1}\|^2 \right)^\frac12  \left(\sum_{ z' \in \Z^3} \| g_{z', \lambda_2}\|^2 \right)^\frac12
      \\
      &\sim \mu  \| f_{ \lambda_1}\| \| g_{ \lambda_2}\|
\end{align*}  
    and 
       \begin{align*}
     \|P_\mu (S_m(t) f_{ \lambda_1} S_m( -t) g_{\lambda_2 })\|  &\lesssim (\mu\lambda_1)^\frac12 \sum_{ |z+z'|\lesssim 1 } \|f_{z, \lambda_1}\|  \|g_{z', \lambda_2}\|
     \\
      &\lesssim   (\mu\lambda_1)^\frac12  \left(\sum_{ z\in \Z^3} \| f_{z, \lambda_1}\|^2 \right)^\frac12  \left(\sum_{ z' \in \Z^3} \| g_{z', \lambda_2}\|^2 \right)^\frac12
      \\
      &\sim  (\mu\lambda_1)^\frac12 \| f_{ \lambda_1}\| \| g_{ \lambda_2}\|.
\end{align*}

    \begin{lemma}[Refined bilinear estimates]\label{lm-locbiest}
    Assume $ \ \mu \ll \lambda_1\sim \lambda_2$. For all $(z,z')\in \Z^3 \times \Z^3$ we have the following localized bilinear estimates:
    \begin{enumerate}[(i)]
    \item \label{locbiest++} (++) interaction:
             \begin{equation}
              \label{loc-biest++}
   \norm{ P_\mu (   S_m( t)f_{z, \lambda_1} \cdot  S_m(  t) g_{z', \lambda_2}  )} \lesssim \mu   \|f_{z, \lambda_1}\| \|g_{z', \lambda_2}\| .
        \end{equation}
      \item (+-)\label{locbiest+-} interaction:
       \begin{equation}\label{loc-biest+-}
    \norm{ P_\mu (   S_m( t) f_{z, \lambda_1}  \cdot  S_m( - t) g_{z', \lambda_2} )} \lesssim (\mu\lambda_1)^\frac12   \|f_{z, \lambda_1}\| \|g_{z', \lambda_2}\| .
            \end{equation}

    \end{enumerate}
    \end{lemma}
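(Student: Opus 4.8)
The plan is to follow the Foschi–Klainerman cube decomposition for $m=0$ (\cite[Lemma 12.1]{FK00}) and work on the space–time Fourier side. Writing the sign of the second propagator as $\epsilon'\in\{+,-\}$,
\[
\mathcal F_{t,x}\bigl[P_\mu\bigl(S_m(t)f_{z,\lambda_1}\cdot S_m(\epsilon't)g_{z',\lambda_2}\bigr)\bigr](\tau,\xi)
=\rho_\mu(|\xi|)\int_{\R^3}\widehat{f_{z,\lambda_1}}(\eta)\,\widehat{g_{z',\lambda_2}}(\xi-\eta)\,\delta\bigl(\tau-\angles{\eta}_m\mp\angles{\xi-\eta}_m\bigr)\,d\eta .
\]
For the $(++)$ bound \eqref{loc-biest++} I would argue exactly as in Case (a) above: apply Cauchy–Schwarz in $\eta$ to peel off the resonance function $J_+(\tau,\xi):=\rho_\mu(|\xi|)\int\mathbbm{1}_{C_z}(\eta)\,\mathbbm{1}_{C_{z'}}(\xi-\eta)\,\delta(\tau-\angles{\eta}_m-\angles{\xi-\eta}_m)\,d\eta$, and then use $\int_\R\delta\,d\tau=1$ to reduce the claim to $\sup_{\tau,\xi}J_+\lesssim\mu^2$. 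To prove this I would write $J_+$, via the coarea formula \eqref{Delta1}, as a surface integral of $|\nabla_\eta(\angles{\eta}_m+\angles{\xi-\eta}_m)|^{-1}=|\widehat\eta-\widehat{\xi-\eta}|^{-1}$ over $\mathcal E(\tau,\xi)\cap C_z\cap(\xi-C_{z'})$; since $\mu\ll\lambda_1\sim\lambda_2$ and $|z+z'|\lesssim1$ force $\eta$ and $\xi-\eta$ to have size $\sim\lambda_1$ and nearly opposite directions, the gradient is bounded below (so $|\widehat\eta-\widehat{\xi-\eta}|^{-1}\lesssim1$, with a constant depending on $m$), while the ellipsoid $\mathcal E(\tau,\xi)$ has all its semi-axes $\sim\lambda_1\gg\mu$ and is therefore essentially flat on the scale of $C_z$, so that $\area(\mathcal E\cap C_z)\lesssim\mu^2$. (Equivalently one may read this off Lemma \ref{lmI+-}\eqref{lmI+-i}, where the $r$-integral runs over $[a_-,a_+]$, an interval of length $\le|\xi|\lesssim\mu$.)

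The $(+-)$ bound \eqref{loc-biest+-} is the main obstacle, and the naive Cauchy–Schwarz reduction is not sufficient for it: the relevant gradient $|\nabla_\eta(\angles{\eta}_m-\angles{\xi-\eta}_m)|=|\widehat\eta+\widehat{\xi-\eta}|$ degenerates in the resonant configuration $\eta\parallel\xi$ — there the two mass shells are tangent and the associated resonance function $J_-$ is only $O(\lambda_1^2)$, not $O(\mu\lambda_1)$. To recover the sharp estimate one must exploit that this resonant sector is a thin, $O(\mu/\lambda_1)$–fraction of the cube $C_z$: I would split $C_z$ into slabs adapted to the direction of $\xi$ and treat separately (i) the slabs transverse to $\xi$, on which $|\widehat\eta+\widehat{\xi-\eta}|\gtrsim\mu/\angles{\lambda_1}_m$ and the coarea bound gives $\lesssim\mu^2\cdot\angles{\lambda_1}_m/\mu\sim\mu\lambda_1$, and (ii) the single near-parallel slab, on which I would use the explicit representation of Lemma \ref{lmI+-}\eqref{lmI+-ii} — keeping the genuine cube/antipodal structure rather than passing to a radial majorant, which is too lossy — together with the smallness of that slab. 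An alternative route, which avoids pointwise suprema entirely, is to estimate $\|P_\mu(S_m(t)f_{z,\lambda_1}\cdot S_m(-t)g_{z',\lambda_2})\|\le\|S_m(t)f_{z,\lambda_1}\|_{L^4_{t,x}}\|S_m(-t)g_{z',\lambda_2}\|_{L^4_{t,x}}$ and invoke a cube-localised refinement of the Strichartz estimate of the form $\|S_m(\pm t)h_{C_z}\|_{L^4_{t,x}}\lesssim(\mu\lambda)^{1/4}\|h_{C_z}\|_{L^2}$, the gain over the dyadic bound $\lambda^{1/2}$ stemming from the near-flatness of the mass shell over a cube of side $\mu$. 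In either approach the crux is the near-parallel régime, where the loss in the resonance (resp. the Strichartz) constant must be absorbed by the thinness of the resonant sector.

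Once the two pointwise (or $L^4$) bounds are established, \eqref{loc-biest++}–\eqref{loc-biest+-} follow by the Cauchy–Schwarz and summation steps already carried out in this section, and this is exactly what is needed to run the cube sum \eqref{cubedecomp}.
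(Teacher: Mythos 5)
Your $(++)$ argument coincides with the paper's: apply Cauchy--Schwarz pointwise in $\eta$ to peel off the resonance function $J^+_{z,z'}(\tau,\xi)$, pass to a surface integral via \eqref{Delta1}, and use that $\eta\in C_z$, $\xi-\eta\in C_{z'}$ are nearly antipodal, so $|\nabla_\eta(\angles{\eta}_m+\angles{\xi-\eta}_m)|\sim 1$, while $\area\big(\mathcal E(\tau,\xi)\cap B_{2\mu}(\mu z)\big)\lesssim\mu^2$, giving $\sup J^+_{z,z'}\lesssim\mu^2$ and hence \eqref{loc-biest++}.

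For \eqref{loc-biest+-} you correctly diagnose the obstruction: the gradient $|\nabla_\eta(\angles{\eta}_m-\angles{\xi-\eta}_m)|=|\widehat\eta+\widehat{\xi-\eta}|$ degenerates near $\eta\parallel\xi$, so a pointwise Cauchy--Schwarz on the single delta function gives only $J_-\lesssim\lambda_1^2$, short of the needed $\mu\lambda_1$. But neither remedy you offer closes the gap. The slab decomposition is left unexecuted, and its crux --- the near-parallel slab --- is deferred to an appeal to Lemma~\ref{lmI+-}\eqref{lmI+-ii}, which you yourself concede is a radial majorant and ``too lossy''; nothing in the write-up explains how to keep the cube geometry and still conclude there. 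The cube-localised Strichartz route is circular as stated: squaring $\|S_m(\pm t)h_{C_z}\|_{L^4_{t,x}}\lesssim(\mu\lambda)^{1/4}\|h_{C_z}\|_{L^2}$ and writing $\|u\|_{L^4}^2=\|u\,\overline u\|_{L^2}$ exhibits it as an instance of the very $(+-)$ localised bilinear estimate \eqref{loc-biest+-} under discussion (with $g$ the conjugate reflection of $f$), so invoking it without an independent proof proves nothing.

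The paper takes a third route, the duality/squaring trick of Foschi--Klainerman \cite[eq.~(66)]{FK00}: expand the squared $L^2$ norm as a quadrilinear integral in $(\xi,\eta,\zeta)$ against the measure $\rho_\mu^2(|\xi|)\,\delta\!\left(\angles{\eta}_m+\angles{\zeta}_m-\angles{\xi-\eta}_m-\angles{\xi-\zeta}_m\right)d\xi\,d\eta\,d\zeta$, apply Cauchy--Schwarz to the grouping $\widehat f(\eta)\widehat g(\zeta)$ versus $\widehat f(\xi-\zeta)\widehat g(\xi-\eta)$, and after a change of variables reduce to $\sup_{\eta,\zeta}J^-_{z,z'}(\eta,\zeta)$, where $\xi$ is integrated over a piece of the ellipsoid $\angles{\xi-\eta}_m+\angles{\xi-\zeta}_m=\angles{\eta}_m+\angles{\zeta}_m$. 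The coarea gradient is now $|\widehat{\xi-\eta}+\widehat{\xi-\zeta}|$, bounded below by the angle $\theta\sim\mu/\lambda_1$ between $\xi-\eta$ and $-(\xi-\zeta)$, controlled by the antipodal cube constraint $|z+z'|\lesssim1$, while the surface area is again $\lesssim\mu^2$, so $\sup J^-_{z,z'}\lesssim\mu^2\cdot\lambda_1/\mu=\mu\lambda_1$. This quadrilinear regrouping --- not a slab decomposition or an $L^4$ refinement --- is the missing idea, and without it (or a fully worked-out substitute) your proof of \eqref{loc-biest+-} has a genuine gap.
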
  
      
      \subsubsection{Proof of Lemma \ref{lm-locbiest}\eqref{locbiest++} }
Set $
\rho_{z, \lambda}(|\xi|)  = \mathbb{1}_{B_{2 \mu} (\mu z)} (\xi)  \cdot \rho_{ \lambda}(|\xi|) ,
$
where $B_{2 \mu} (\mu z)$ denotes the ball of center $\mu z$ and radius $2\mu$. 
Squaring \eqref{FTcubedecomp} and using Cauchy-Schwarz we have
\begin{equation}\label{locbiest++int}
\begin{split}
& \left|\mathcal{F}_{t,x}\left[ P_\mu (   S_m( t) f_{\lambda_1, z} \cdot  S_m( t)  g_{\lambda_2, z'}  ) \right](\tau, \xi)\right|^2
\\
&\qquad \qquad \lesssim J^+_{z,z'}(\tau, \xi) \cdot \int_{\R^3} | \widehat{f_{z, \lambda_1}}(\eta) |^2   |\widehat{g_{z', \lambda_2}}(\xi-\eta)|^2 \delta( \tau-\angles{\eta}_m- \angles{\xi-\eta}_m)\, d\eta,
\end{split}
\end{equation}
where
\begin{align*}
J^+_{z,z'}(\tau, \xi)&=  \rho_\mu(|\xi|)  \int_{\R^3}  \rho_{z, \lambda_1}(|\eta|)  \rho_{z', \lambda_2}(|\xi-\eta|) \delta( \tau-\angles{\eta}_m- \angles{\xi-\eta}_m) \, d\eta.
\end{align*}
     
 It suffices to show for all $(z,z')\in \Z^3 \times \Z^3$ that
\begin{equation}
\label{J+est}
 \sup_{ (\tau, \xi)\in \R^{1+3}} J^+_{z,z'}(\tau, \xi)
\lesssim
 \mu^2 \quad \text{if}  \ \mu \ll \lambda_1\sim \lambda_2.
\end{equation}
Integration of \eqref{locbiest++int} in $\tau$ and $\xi$ then yields Lemma \ref{lm-locbiest}\eqref{locbiest++}.

We now prove \eqref{J+est}. By \eqref{Delta1} we have
\begin{align*}
J^+_{z,z'}(\tau, \xi) &=\rho_\mu(|\xi|) \int_{\eta\in \mathcal E(\tau, \xi)  \cap B_{2 \mu} (\mu z) } \frac{ \rho_{ \lambda_1}(|\eta|)    \rho_{ z', \lambda_2}(|\xi-\eta|) }{ |  \nabla_\eta (\angles{\eta}_m+\angles{\xi-\eta}_m )  | } dS\eta,
\end{align*}
where the set
$
\mathcal E(\tau, \xi)$ is as in \eqref{elipsoid}.
Since $\angle{(\eta, \xi-\eta)}\sim 1$ we have
$$
|\nabla_\eta (\angles{\eta}_m+\angles{\xi-\eta}_m ) | =\left|  \frac{\eta}{\angles{\eta}_m}- \frac{\xi-\eta}{\angles{\xi-\eta}_m}\right|\sim 1.
$$
The domain of integration, $  \mathcal E(\tau, \xi) \cap B_{2 \mu} (\mu z) $, is a two dimensional surface with area $\lesssim \mu^2$. Thus for all  $(z, z')\in \Z^3 \times \Z^3$ and $(\tau, \xi)\in \R^{1+3}$, we have 
$$
J^+_{z,z'}(\tau, \xi)  \lesssim |\mathcal E(\tau, \xi) \cap B_{2 \mu} (\mu z) | \lesssim \mu^2,
$$
and this establishes the $\eqref{J+est}$  .  
     
      \subsubsection{Proof of Lemma \ref{lm-locbiest}\eqref{locbiest+-} }
Here we follow  Foschi–Klainerman for $m=0$
\cite[proof of Lemma 12.1-- equation (66)]{FK00}). To estimate the left hand side of \eqref{loc-biest+-} first we square  \eqref{FTcubedecomp} (the +- case), write it as a double integral and then integrate over $\tau$ and $\xi$ . After applying the Fubini-Tonelli theorem and rearranging the integrand we obtain
\begin{align*}
&\norm{ P_\mu (   S_m( t) f_{z, \lambda_1}  \cdot  S_m( - t) g_{z', \lambda_2} )}^2 
\\
 & \qquad = \int_{\R^{9}}   \widehat{ f_{\lambda_1, z} }(\eta) \widehat{g_{\lambda_2, z'}}(\zeta) \cdot \widehat{ f_{\lambda_1, z} }(\xi-\zeta) \widehat{g_{\lambda_2, z'}}(\xi-\eta)   \, d\sigma ( \xi, \eta, \zeta),
\end{align*}
where $d\sigma ( \xi, \eta, \zeta)$ is the surface measure
\begin{align*}
 d\sigma ( \xi, \eta, \zeta) &=  \rho^2_\mu(|\xi|)\cdot 
\delta\left(\angles{\eta}_m+ \angles{\zeta}_m -\angles{\xi-\eta}_m
 -\angles{\xi-\zeta}_m \right)
 d\xi d\eta d\zeta.
\end{align*}
Applying Cauchy-Schwarz on the terms $  \widehat{ f_{\lambda_1, z} }(\eta) \widehat{g_{\lambda_2, z'}}(\zeta) $ and $\widehat{ f_{\lambda_1, z} }(\xi-\zeta) \widehat{g_{\lambda_2, z'}}(\xi-\eta) $ with respect to the measure 
 $d\sigma ( \xi, \eta, \zeta)$, and then changing variables we obtain
\begin{align*}
\norm{ P_\mu (   S_m( t) f_{z, \lambda_1}  \cdot  S_m( - t) g_{z', \lambda_2} )}^2 &\le  \int_{\R^{9}}   |\widehat{ f_{\lambda_1, z} }(\xi-\eta)|^2 | \widehat{g_{\lambda_2, z'}}(\xi-\zeta) |^2 \, d\sigma ( \xi, \eta, \zeta)
\\
&\lesssim \int_{\R^{6}} J^-_{z,z'} (\eta, \zeta) \cdot      |\widehat{ f_{\lambda_1, z} }(\xi-\eta)|^2 | \widehat{g_{\lambda_2, z'}}(\xi-\zeta) |^2 \, d\eta d\zeta,
\end{align*}
where
\begin{align*}
J^-_{z,z'}(\eta, \zeta) 
&= \int_{\R^3}   \rho_{z, \lambda_1}(|\xi-\eta|)  \rho_{z', \lambda_2}(|\xi-\zeta|) \rho_\mu(|\xi|) 
\\
& \qquad \qquad \qquad \qquad \times \delta (\angles{\eta}_m+ \angles{\zeta}_m- \angles{\xi-\eta}_m  -\angles{\xi-\zeta}_m) \, d\xi .
\end{align*}

So it suffices to show for all $(z,z')\in \Z^3 \times \Z^3$ that
\begin{equation}
\label{J-est}
 \sup_{ \eta \in C_z,  \ \zeta \in C_{z'}} J^-_{z,z'}(\eta, \zeta)
\lesssim
 \mu \lambda_1 \quad \text{for}  \ \mu \ll \lambda_1\sim \lambda_2.
\end{equation}

By \eqref{Delta1} we have
\begin{align*}
J^-_{z,z'}(\eta, \zeta)=  \int_{\xi \in \mathcal E(\eta, \zeta)} \frac{\rho_{z, \lambda_1}(|\xi-\eta|)  \rho_{z', \lambda_2}(|\xi-\zeta|) \rho_\mu(|\xi|) }{ |  \nabla_\xi (\angles{\xi-\eta}_m  + \angles{\xi-\zeta}_m)  | }   dS_\xi,
\end{align*}
where 
$$
\mathcal  E(\eta, \zeta)= \left\{ \xi\in \R^3 : \ \angles{\xi-\eta}_m  +\angles{\xi-\zeta}_m= \angles{\eta}_m+ \angles{\zeta}_m \right\}.
$$

Now we compute
\begin{align*}
 |\nabla_\xi (\angles{\xi-\eta}_m  + \angles{\xi-\zeta}_m)  |^2 &=\left|  \frac{\xi-\eta}{\angles{\xi-\eta}_m}+ \frac{\xi-\zeta}{\angles{\xi-\zeta}_m}\right|^2 
\\
&=\left|  \frac{|\xi-\eta|}{\angles{\xi-\eta}_m}- \frac{|\xi-\zeta|}{\angles{\xi-\zeta}_m}\right|^2 + \frac{ 2\left[|\xi-\eta|  |\xi-\zeta|+ (\xi-\eta)\cdot (\xi-\zeta)\right]}{\angles{\xi-\eta}_m \angles{\xi-\zeta}_m}
\\
&\gtrsim  \theta^2,
\end{align*}
where  $\theta=\angle\left(\xi-\eta, -(\xi-\zeta)\right).$ Observe that since $\xi-\eta \in C_z$ and $ \xi-\zeta \in C_{z'}$, where $|z+z'|\lesssim 1$ (see the comments under equation \eqref{FTcubedecomp}),  we conclude that
$$
\theta\sim \mu/\lambda_1.
$$
Thus $|\nabla_\xi (\angles{\xi-\eta}_m  + \angles{\xi-\zeta}_m)  |\gtrsim  \mu/\lambda_1$, and hence
\begin{align*}
J^-_{z,z'}(\eta, \zeta)\lesssim \frac{\lambda_1}{\mu} \int_{\xi \in \mathcal E(\eta, \zeta)  \cap B_{2\mu} (0)}   dS_\xi = \frac{\lambda_1}{\mu}  |\mathcal E(\eta, \zeta)  \cap B_{2\mu} (0)| \lesssim \mu\lambda_1
\end{align*}
since $  \mathcal E(\tau, \xi) \cap B_{2 \mu} (\mu z) $ is a two dimensional surface with area $\lesssim \mu^2$. This establishes \eqref{J-est}.

%*******************************************************************

\subsection*{Acknowledgement}
 The author would like to thank Sigmund Selberg
 for his encouragement and useful discussions while working on the paper. The author is also grateful for the anonymous referees’ valuable comments
and suggestions.
%*****************************************************************

%**************************************************************
%*****************************************************************

\bibliographystyle{plain} \bibliography{3DDirac.bbl}

\end{document}